\newcommand{\del}{\delta}
\newcommand{\smallsum}{\textstyle\sum}
\newtheorem{lemma}{lemma}[section]
\newtheorem{corollary}[lemma]{Corollary}
\newtheorem{theorem}[lemma]{Theorem}
\newtheorem{proposition}[lemma]{Proposition}
\renewcommand{\d}{C}
\renewcommand{\c}{c}
\providecommand{\N}{{\ensuremath{\mathbbm{N}}}}
\providecommand{\R}{{\ensuremath{\mathbbm{R}}}}
\providecommand{\E}{{\ensuremath{\mathbb{E}}}}
\renewcommand{\P}{{\ensuremath{\mathbb{P}}}}
\renewcommand{\H}{{\ensuremath{\mathbb{H}}}}
\providecommand{\1}{{\ensuremath{\mathbbm{1}}}}
\providecommand{\HS}{{\ensuremath{\textup{HS}}}}
\begin{document}

\title{Strong convergence rates for an explicit numerical 
approximation method for stochastic evolution equations with 
non-globally Lipschitz continuous nonlinearities}
\author{Arnulf Jentzen and Primo\v z Pu\v snik \\\\ ETH Z\"urich, Switzerland}

\maketitle
%
\begin{abstract}
In this article we propose a new, explicit and easily implementable
numerical method for approximating a class of semilinear stochastic 
evolution equations with non-globally Lipschitz continuous 
nonlinearities. We establish strong convergence rates for this 
approximation method in the case of semilinear stochastic evolution 
equations with globally monotone coefficients. Our strong convergence result, 
in particular, applies to a class of 
stochastic reaction-diffusion partial differential equations.
\end{abstract}

\tableofcontents
%

\section{Introduction}
\label{sec:intro}
In this article we are interested in strong approximations of stochastic evolution
equations (SEEs) with non-globally Lipschitz continuous nonlinearities.
In the literature, there are nowadays a number of strong approximation results 
for such stochastic evolution equations on finite 
dimensional state spaces, that is, for finite dimensional stochastic ordinary 
differential equations (SODEs). For example, Theorem~2.1 in Hutzenthaler et 
al.~\cite{HutzenthalerJentzenKloeden2013} (see also Theorem~2.1 in 
Hutzenthaler et al.~\cite{hjk11}) proves that the classical explicit 
Euler scheme (also known as Euler-scheme or Euler-Maruyama scheme; 
see Maruyama~\cite{m55}) diverges strongly and numerically weakly 
in finite time time when applied to a SODE with superlinearly growing (and hence 
non-globally Lipschitz continuous) nonlinearities. 
Theorem~2.4 in Hu~\cite{Hu1996} establishes that the drift-implicit Euler scheme 
(also known as Backward Euler scheme or implicit Euler scheme)
overcomes this lack of strong convergence of the explicit Euler scheme
and converges with the usual strong order $ \nicefrac{ 1 }{ 2 } $ to
the solution process in the case of some SODEs with non-globally Lipschitz continuous 
but globally monotone coefficients. However, the drift-implicit Euler scheme 
can often only be realized approximatively and this approximation of the drift-implicit 
Euler scheme is computationally more expensive than the explicit Euler scheme, particularly
when the state space of the considered SEE is high dimensional (see, e.g., Figure~4 
in Hutzenthaler et al.~\cite{HutzenthalerJentzenKloeden2012}),
because the solution of a nonlinear equation has to be computed approximatively
at each time step. In Hutzenthaler et al.~\cite{HutzenthalerJentzenKloeden2012}
a modified version of the explicit Euler scheme, which is explicit and easy to implement, 
has been proposed and shown to converge with the usual strong 
order $ \nicefrac{ 1 }{ 2 } $ to the solution process in the case of some 
SODEs with non-globally Lipschitz continuous but globally monotone coefficients. 
The above mentioned articles contain just a few selected illustrative results and a number of further and 
partially significantly improved strong approximation results for SODEs with non-globally Lipschitz 
continuous nonlinearities are available in the literature; see, e.g., 
\cite{BeynIsaakKruse2014},
\cite{HutzenthalerJentzen2014PerturbationArxiv},
\cite{HutzenthalerJentzen2014Memoires},
\cite{KloedenNeuenkirch2013}, 
\cite{Sabanis2013Arxiv},
\cite{Sabanis2013ECP},
\cite{Szpruch2013Vstable},
\cite{TretyakovZhang2013},
\cite{WangGan2013},
\cite{Zhang2014},
and the references mentioned in the above named references 
for some strong numerical approximations results for explicit schemes and multi-dimensional SODEs
with non-globally Lipschitz continuous coefficients. 
At least parts of the above outlined story has already been extended to SEEs 
on infinite dimensional state spaces including stochastic partial differential 
equations (SPDEs) as special cases. In particular, it is clear that 
Theorem~2.1 in Hutzenthaler et al.~\cite{HutzenthalerJentzenKloeden2013}
also extends to some SEEs with superlinearly 
growing nonlinearities on infinite dimensional state space
(see Section~5.1 in Kurniawan~\cite{MasterRyan}).
More specifially, 
the explicit, the exponential, and the linear-implicit Euler method
are known to diverge in the strong and numerically weak sense in the case 
of some SPDEs with superlinearly growing coefficients.
Moreover, strong convergence but with no rate of convergence of an full-discrete drift-implicit Euler method 
has, e.g., been proven in Theorem~2.10 in Gy\"{o}ngy \& Millet~\cite{GyoengyMillet2005}
in the case of some SEEs with non-globally Lipschitz continuous nonlinearities;
see also, e.g., 
Theorem~7.1 in Brze\'{z}niak et al.~\cite{BrzezniakCarelliProhl2013}
and
Theorem~5.4 in Kov\'{a}cs et al.~\cite{KovacsLarssonLindgren}.
Furthermore, 
in Gy\"{o}ngy et al.~\cite{SabanisSiskaGyongy2014Arxiv}, 
in Hutzenthaler \& Jentzen~\cite[equation~(3.145)]{HutzenthalerJentzen2014Memoires}, 
and in Kurniawan~\cite{MasterRyan} appropriately modified,
explicit and easily realizable versions of the 
explicit, the exponential and the linear-implicit Euler scheme 
have been considered for approximating semilinear SEEs with non-globally Lipschitz continuous nonlinearities.
In addition, 
in 
Gy\"{o}ngy et al.~\cite{SabanisSiskaGyongy2014Arxiv}
and in Kurniawan~\cite{MasterRyan},
it has also been proved that the considered approximation methods converge strongly 
to the solution processes of the investigated SEEs.
The results in Gy\"{o}ngy et al.~\cite{SabanisSiskaGyongy2014Arxiv}
and in Kurniawan~\cite{MasterRyan} do not prove any rate of strong convergence.
In this article we propose a modified variant of the scheme considered 
in Kurniawan~\cite[Section 2]{MasterRyan}
and prove for every $ p \in (0,\infty) $ that this scheme convergences in strong $ L^p $-distance with an appropriate 
strong rate of convergence in the case of a class of semilinear SEEs with non-globally
Lipschitz continuous but globally monotone nonlinearities; see Theorem~\ref{Finale} 
(the main result of this article) in Section~\ref{section5} 
below for details.
To the best of our knowledge, Theorem~\ref{Finale} below is the first result in the literature 
which establishes a strong convergence rate for an explicit and easily implementable
full-discrete numerical approximation method for semilinear SPDEs with non-globally Lipschitz continuous 
nonlinearities.

In the remainder of this introductory section 
we illustrate Theorem~\ref{Finale} by presenting a consequence of it
in Theorem~\ref{Izrek1} below.
For this we consider the following setting (see Section \ref{section5} below for our 
general framework). 
Let $\left( H, \langle \cdot, \cdot \rangle _H, \left \| \cdot \right\|_H \right)$ 
and 
$\left( U, \langle \cdot, \cdot \rangle _U, \left\| \cdot \right \|_U \right)$
be separable $ \R$-Hilbert spaces, let $ \H \subseteq H $ be a countable orthonormal basis of $ H,$
let $ \mathbb{U} \subseteq U $ be an orthonormal 
basis of $ U,$
let $ \lambda \colon \H \to \R $ be a function satisfying $ \sup_{h \in \H} \lambda_h < 0,$
let $A \colon D( A ) \subseteq H \to H$ 
be the linear operator such that
$ 
  D(A) 
  = \big\{
    v \in H \colon \sum_{ h\in \H} \left| \lambda_h \langle h, v \rangle_H \right|^2 
    < \infty
  \big\} 
$
and such that for all 
$v \in D(A)$
it holds that
$ A v = \sum_{ h\in \H} \lambda_h \langle h, v \rangle _H h,$
let
$(H_r, \langle \cdot, \cdot \rangle_{H_r}, \left \| \cdot \right \|_{H_r} ), r\in \R,$ be a family of interpolation spaces associated to $-A$
(see, e.g., Theorem and Definition $2.5.32$ in \cite{Jentzen2014SPDElecturenotes}), 
let $ T \in (0,\infty),$ $ c \in [1,\infty),$
$\gamma\in [0,\nicefrac{1}{2}),$ 
$\alpha \in [0, 1-\gamma),$ 
$\beta \in [0, \nicefrac{1}{2}-\gamma),$
$\delta\in [0,\gamma],$
$\xi \in H_{\nicefrac{1}{2}},$
$\theta\in (0, \nicefrac{1}{4}], $ 
$p\in [2,\infty),$ 
$\kappa\in(\nicefrac{2}{p},\infty),$
$\varepsilon\in(0,\infty),$
$
F\in \mathcal{C} 
(H_\gamma, 
H) 
,
B\in \mathcal{C}
(H_\gamma,
\HS(U, H) 
)
,
$
let
$( \Omega, \mathcal{ F }, \P)$
be a probability space with a normal filtration 
$( \mathcal{ F }_t )_{t \in [ 0, T]},$
let
$ \left( W_t \right)_{t \in [0, T]}$
be a cylindrical
$\operatorname{Id}_U$-Wiener process with respect to 
$ ( \mathcal{F}_t )_{t \in [0, T]},$
let $X \colon [0,T]\times \Omega \to H_\gamma$ be 
an $(\mathcal{F}_t)_{t\in [0,T]}$-adapted stochastic process
with continuous sample paths
such that for all $t\in [0, T]$ it holds $\P$-a.s.\,that
\begin{equation}
\label{have.solve}
X_t = e^{tA} \xi + \int_0^t e^{(t-s)A}F(X_s) \, ds
+
\int_0^t e^{(t-s)A} B( X_s ) \, dW_s,
\end{equation}
let $(P_I)_{I\in \mathcal{P}(\H) }\subseteq L(H) $
and
$(\hat P_J)_{J\in \mathcal{P}(\mathbb{U})}\subseteq L(U)$ 
be the linear operators with the property 
that\footnote{Here and below we denote for a set $ S $ by
$ \mathcal{P}( S ) $ the power set of $ S $
and we denote for a set $ S $ by $ \mathcal{P}_0( S ) $ the set given by 
$
  \mathcal{P}_0( S ) = \{ M \in \mathcal{P}( S ) \colon M \text{ is a finite set} \}
$.}
for all
$ x \in H $, $ y \in U $, $ I \in \mathcal{P}( \H ) $, $ J \in \mathcal{P}(\mathbb{U}) $ 
it holds that
$ P_I(x) =\sum_{h\in I} \langle h, x \rangle_H h$
and
$ \hat P_J(y) = \sum_{ u \in J } \langle u, y \rangle_U u $,
let 
$
  Y^{ N, I, J } 
  \colon [0, T] \times \Omega \to H_{ \gamma } 
$, 
$ N \in \N $, 
$ I \in \mathcal{P}( \H ) $, 
$ J \in \mathcal{P}( \mathbb{U} ),$ 
be 
$(\mathcal{F}_t)_{t\in [0,T]}$-adapted 
stochastic processes 
such that for all 
$ t \in [0,T] $, $ N \in \N $, 
$ I \in \mathcal{P}( \H ) $, $ J \in \mathcal{P}( \mathbb{U} ) $
it holds $\P$-a.s.\ that
\begin{equation}
\label{Main.scheme.last.pertubation}
\begin{split}
Y^{N,I,J}_t 
&
=
e^{tA} P_I \xi
+
\int_0^t
e^{(t- \lfloor s \rfloor_{T/N})A}
\,
\1_{ \big \{ 
 \| 
P_I F  (Y^{N,I,J}_{\lfloor s \rfloor_{T/N}}  )
 \|_H 
+ 
 \| P_I B( Y^{N,I,J}_{\lfloor s \rfloor_{T/N}} )  \|_{HS(U, H)} 
\,
\leq 
\left ( \frac{N}{T}  \right)^\theta \big \}}
P_I F( Y^{N,I,J}_{\lfloor s \rfloor_{T/N}} ) 
\,
ds
\\
&
\quad
+
\int_0^t e^{(t- \lfloor s \rfloor_{T/N})A}
\,
\1_{ \big\{ \| P_I F(Y^{N,I,J}_{\lfloor s \rfloor_{T/N}} )\|_H 
+ 
 \| P_I B( Y^{N,I,J}_{\lfloor s \rfloor_{T/N}}  )  \|_{HS(U, H)} 
\leq 
\left ( \frac{N}{T} \right )^\theta \big \}}
\,
 P_I B( Y^{N,I,J}_{\lfloor s \rfloor_{T/N}}  ) \hat P_J\,dW_s
 ,
\end{split}
\end{equation}
and
assume that for all $ x, y \in H_\gamma $, $ v, w \in H_1 $ it holds that
$
  \langle v, F(v) \rangle_H
  +
  \frac{
    2 c (c + 1) 
    p 
    \max\{ \kappa, \nicefrac{ 1 }{ \theta } \} 
    - 1
  }{ 2 }
  \left\| 
    B(v) 
  \right \|_{ \HS(U,H) }^2
\leq
  c 
  \left( 
    1 + \| v \|_H^2
  \right)
  ,
$
$
  \max\{ 
    \| F(x) \|_{ H_{ - \alpha } } , \| B(x) \|_{ \HS( U, H_{ - \beta } ) }  
  \} 
  \leq 
  c \left( 1 + \| x \|_H^c \right)
$,
$
  \langle v - w , A v - A w + F(v) - F(w) \rangle_H 
  + 
  \tfrac{ ( p - 1 ) ( 1 + \varepsilon ) }{ 2 } 
  \left\| B(v) - B(w) \right\|_{ \HS( U, H ) }^2  
  \leq 
  c \left\| v - w \right\|_H^2
$,
and
$
  \max\{ 
    \left\| 
      F(x) - F(y) 
    \right\|_H , 
    \left\| 
      B(x) - B(y) 
    \right\|_{
      \HS( U, H ) 
    } 
  \}
\leq 
  c
  \left \| x - y \right \|_{H_\delta}
  ( 1 + \left \| x \right \|_{H_\gamma}^\c + \left \| y \right \|_{H_\gamma}^\c )
$.
In the following we refer to the numerical approximations in \eqref{Main.scheme.last.pertubation}
as nonlinearities-stopped exponential Euler approximations 
(cf., e.g., Kurniawan~\cite[Section~2]{MasterRyan}).
\begin{theorem}
\label{Izrek1}
Assume the setting in the second paragraph of Section~\ref{sec:intro}. 
Then for every $ \eta \in [0, \nicefrac{ 1 }{ 2 } ) $ 
there exists a real number $K \in [0,\infty)$ such that for all $N \in \N, I\in \mathcal{P}_0(\H), J\in \mathcal{P}_0(\mathbb{U})$
it holds that
\begin{equation}
\begin{split}
  \sup_{ t \in [0,T] }
  \big\|
    X_t - Y_t^{ N, I, J } 
  \big\|_{ L^p( \P; H ) }
\leq
  K
  \left(
    N^{  \delta - \eta }
    +
    \Big|
      \!
      \sup_{
        h \in \H \setminus I
      }
      \lambda_h
    \Big|^{
      ( \delta - \eta )
    }
    +
    \sup_{ v \in H_\eta }
    \Big[
      \tfrac{
        \| B(v) \hat{P}_{ \mathbb{U} \backslash J } \|_{ \HS(U, H_{-\eta}) }
      }{ 
        ( 1 + \|v\|_{ H_\eta } )^\kappa
      }
    \Big]
  \right)
.
\end{split}
\end{equation}
\end{theorem}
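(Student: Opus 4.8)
The plan is to derive Theorem~\ref{Izrek1} as a special case of the main result, Theorem~\ref{Finale} in Section~\ref{section5}; the work then reduces to specializing the abstract framework and to checking that the four inequalities assumed in the setting imply the abstract hypotheses, after which the conclusion is read off.

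First I would match the data. The abstract framework of Theorem~\ref{Finale} is formulated for a general family of ``approximating'' linear operators; here one takes this family to be the spectral Galerkin projections $(P_I)_{I\in\mathcal{P}(\H)}$ on $H$ together with the projections $(\hat P_J)_{J\in\mathcal{P}(\mathbb{U})}$ on $U$, so that the operator $A$ and its interpolation spaces $(H_r)_{r\in\R}$, the nonlinearities $F$ and $B$, the cylindrical $\operatorname{Id}_U$-Wiener process $W$, the mild solution $X$ from~\eqref{have.solve}, and the nonlinearities-stopped exponential Euler approximations $Y^{N,I,J}$ from~\eqref{Main.scheme.last.pertubation} (with truncation exponent $\theta$ and step size $T/N$) are all instances of the abstract objects. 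Since $P_I$ commutes with $e^{tA}$ and satisfies $\|P_I\|_{L(H)}\le 1$ (and likewise for $\hat P_J$), the regularity and smoothing properties demanded of the approximating operators hold here automatically.

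The core of the argument is to verify the abstract coercivity, growth, monotonicity and local-Lipschitz assumptions of Theorem~\ref{Finale}. The third displayed inequality above is exactly the global monotonicity condition that drives the Gronwall estimate in the proof of Theorem~\ref{Finale}; the fourth is the local Lipschitz continuity with the $H_\delta$-weight and polynomial factor which, combined with uniform moment bounds, renders the difference terms amenable to that estimate; and the second provides the polynomial growth of $F$ into $H_{-\alpha}$ and of $B$ into $\HS(U,H_{-\beta})$ which, together with $\xi\in H_{\nicefrac12}$ and the constraints $\alpha<1-\gamma$, $\beta<\nicefrac12-\gamma$, yields (via the a priori regularity estimates developed in the article) the spatial regularity and integrability of $X$ and of the $Y^{N,I,J}$ needed to apply Theorem~\ref{Finale} with the chosen parameter $\eta$. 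The first inequality --- with the somewhat unusual constant $\tfrac{2c(c+1)p\max\{\kappa,\nicefrac{1}{\theta}\}-1}{2}$ in front of $\|B(v)\|_{\HS(U,H)}^2$ --- is the coercivity bound whose role is, after combining it with the cut-off at level $(N/T)^\theta$ and the contraction property of $P_I$, to produce uniform-in-$(N,I,J)$ $p$-th moment a priori bounds $\sup_{N\in\N}\sup_{I,J}\sup_{t\in[0,T]}\|Y^{N,I,J}_t\|_{L^p(\P;H)}<\infty$ (and the analogous bound for $X$); the exponent $\max\{\kappa,\nicefrac{1}{\theta}\}$ is calibrated so as to simultaneously absorb the contribution of the truncation (roughly governed by $\nicefrac{1}{\theta}$) and that of the polynomial weight $(1+\|\cdot\|_{H_\eta})^\kappa$ appearing in the noise-truncation term of the conclusion.

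Finally I would invoke Theorem~\ref{Finale} with its abstract regularity parameter equal to the given $\eta\in[0,\nicefrac12)$. Its abstract error estimate then decomposes into exactly the three contributions in the claim: a time-discretization term of order $N^{\delta-\eta}$, where the loss $\delta$ stems from the $H_\delta$-weight in the fourth inequality measured against the $H_\eta$-regularity of $X$; a spatial-Galerkin term governed by $|\sup_{h\in\H\setminus I}\lambda_h|^{\delta-\eta}$, obtained from the smoothing estimates for $\operatorname{Id}_H-P_I$ between the interpolation spaces combined once more with that weight and regularity; and a noise-truncation term $\sup_{v\in H_\eta}\big[\|B(v)\hat P_{\mathbb{U}\setminus J}\|_{\HS(U,H_{-\eta})}(1+\|v\|_{H_\eta})^{-\kappa}\big]$ arising from replacing $W$ by $\hat P_J W$ in~\eqref{Main.scheme.last.pertubation}. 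Collecting these bounds yields the asserted inequality with a finite $K$ depending only on the quantities fixed in the setting and on $\eta$. I expect the main obstacle to be the verification in the previous paragraph --- in particular deriving the uniform-in-$(N,I,J)$ moment bounds from the non-standardly normalized coercivity inequality --- whereas the translation of the abstract error bound into the stated one is essentially bookkeeping of interpolation-space exponents.
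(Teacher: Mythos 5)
Your proposal matches the paper exactly: the paper proves Theorem~\ref{Izrek1} solely by declaring it a direct consequence of Theorem~\ref{Finale} (applied with $R=\hat P_J$, so that $\operatorname{Id}_U-R=\hat P_{\mathbb{U}\setminus J}$ and $\|P_{\H\setminus I}\|_{L(H,H_{\delta-\eta})}=|\sup_{h\in\H\setminus I}\lambda_h|^{\delta-\eta}$), and your verification of the coercivity, growth, monotonicity and Lipschitz hypotheses is the intended specialization. The one detail you gloss over is that Theorem~\ref{Finale} only admits $\eta\in[\max\{\delta,\gamma\},\nicefrac12)$ and $\nu\in(0,\nicefrac12-\delta)$, so for the remaining $\eta\in[0,\nicefrac12)$ one must apply it with admissible parameters and then dominate the resulting bound by the claimed one, using $N\geq 1$, $\inf_{h}|\lambda_h|>0$, and the embeddings between the spaces $H_r$.
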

Theorem \ref{Izrek1} is a direct consequence of Theorem~\ref{Finale} in Section~\ref{section5} below.
In the following we give an outline of the proof of Theorem~\ref{Finale} and we 
also sketch the content of the remaining sections of this article.
The proof of Theorem~\ref{Finale} is divided into several pieces.
First, in Section \ref{section2} we establish a priori moment estimates 
for the approximation scheme \eqref{Main.scheme.last.pertubation} 
in the $ H_0 $-norm.
In Section~\ref{section3} we use twice suitable bootstrap-type arguments 
to strengthen these a priori moment bounds in the $ H_0 $-norm to obtain 
for any $ \eta \in ( - \infty, \nicefrac{ 1 }{ 2 } ) $ 
a priori moment estimates 
for the approximation scheme \eqref{Main.scheme.last.pertubation} 
in the $ H_{ \eta } $-norm.
In Section~\ref{section4} we use the a priori moment bounds established in 
Sections~\ref{section2} and \ref{section3} to estimate the temporal 
discretization errors of the nonlinearities-stopped exponential Euler approximations 
in \eqref{Main.scheme.last.pertubation}; see Corollary~\ref{corollary.combined} 
in Section~\ref{section4}.
Our main idea in the proof of Corollary~\ref{corollary.combined} is not to estimate
the error of the numerical approximations~\eqref{Main.scheme.last.pertubation}
and the solution process $ X $ of the SEE~\eqref{have.solve} directly but instead 
to plug, similar as in 
Jentzen \& Kurniawan \cite[(11), (70), (136)]{KurniawanJentzen2015Arxiv}, 
appropriate approximation processes, so-called semilinear integrated 
counterparts of \eqref{Main.scheme.last.pertubation}, 
in between, to estimate the difference of the numerical approximations~\eqref{Main.scheme.last.pertubation}
and their semilinear integrated counterparts in a straightforward way 
(see Lemma \ref{Lemma12}) and to employ the perturbation estimate 
in Theorem~2.10 in Hutzenthaler \& Jentzen~\cite{HutzenthalerJentzen2014PerturbationArxiv} to estimate the differences of
the solution process of the considered SEE and the semilinear
integrated counterparts of the nonlinearities-stopped exponential Euler
approximations (see Lemma \ref{Lemma13}). Combining Lemma \ref{Lemma12} and Lemma \ref{Lemma13} with
the triangle inequality will then immediately result in Corollary \ref{corollary.combined}.
In Section~\ref{section.known} and Section~\ref{section.galerkin}
we establish an auxiliary spatial approximation result
(see Proposition~\ref{convergence_probability} in Section~\ref{section.galerkin})
which we use in Section~\ref{section5} to prove Theorem~\ref{Finale}.
In addition, we use consequences of the perturbation estimate 
in Theorem~2.10 in 
Hutzenthaler \& Jentzen~\cite{HutzenthalerJentzen2014PerturbationArxiv}
to establish strong convergence rates for spatial spectral Galerkin 
approximations (see Lemma~\ref{lem:spatial_rate})
and for noise approximations 
(see Lemma~\ref{noise.galerkin})
of the considered SEEs.
Combining the spatial approximation result in Lemma~\ref{lem:spatial_rate}
in Section~\ref{section5}
and the noise approximation result in Lemma~\ref{noise.galerkin} 
in Section~\ref{section5} with the results established 
in the earlier sections of this article (especially the temporal
approximation result in Corollary~\ref{corollary.combined} in Section~\ref{section4})
will then allow us to complete the proof of Theorem~\ref{Finale}
in Section~\ref{section5}.
In Section~\ref{section8} we illustrate the consequences 
of Theorem~\ref{Finale} 
and Theorem~\ref{Izrek1} respectively 
in the case of an illustrative example SPDE.

More formally, suppose that 
$ 
  ( H, \left< \cdot , \cdot \right>_H , \left\| \cdot \right\|_H ) 
  =
  ( U, \left< \cdot , \cdot \right>_U , \left\| \cdot \right\|_U ) 
$
is the $ \R $-Hilbert space of equivalence classes of Lebesgue square integrable
functions from $ (0,1) $ to $ \R $,
let 
$ \rho \in (0,\infty) $,
$ ( r_n )_{ n \in \N } \subseteq \R $,
and 
$ ( e_n )_{ n \in \N } \subseteq H $
satisfy that
$ \H = \{ e_1, e_2, \dots \} $,
that
$
  \sup_{ n \in \N } \left( n \left| r_n \right| \right) < \infty
$,
and that\footnote{Here and below we denote for a natural number $ d \in \N $
and a Borel measurable set $ B \in \mathcal{B}( \R^d ) $ by 
$ \mu_B \colon \mathcal{B}( B ) \to [0,\infty] $ the 
Lebesgue-Borel measure on $ B $.}
for all $ n \in \N $
and $ \mu_{ (0,1) } $-almost all $ x \in (0,1) $
it holds that 
$
  e_n( x ) = \sqrt{2} \sin( n \pi x )
$,
$
  \lambda_{ e_n } = - n^2 \pi^2
$
and 
$
  \xi( x ) \geq 0
$,
let $ Q \in L_1(H) $ satisfy that
for all $ v \in H $ it holds that
$
  Q v = \sum_{ n = 1 }^{ \infty } r_n \left< e_n, v \right>_H e_n
$,
assume that 
$ \gamma \in ( \nicefrac{ 1 }{ 4 } , \nicefrac{ 1 }{ 2 } ) $,
and assume that
for all $ v \in H_{ \gamma } $,
$ u \in H $ 
and $ \mu_{ (0,1) } $-almost all $ x \in (0,1) $
it holds that
$ \big( F( v ) \big)(x) = | v( x ) | \left( \rho - v(x) \right) $
and $ \big( B( v ) u \big)( x ) = v(x) \cdot ( Q^{ 1 / 2 } u )( x ) $.
The stochastic process $ X $ is thus a solution process of 
the stochastic reaction-diffusion partial differential equation
\begin{equation*}
  d X_t( x ) = 
  \left[ 
    \tfrac{ \partial^2 }{ \partial x^2 } X_t( x )
    +
    X_t( x ) \left( \rho - X_t( x ) \right)
  \right] 
  dt
  +
  \sigma X_t( x ) \, dW_t(x) ,
  \; 
  X_0( x ) = \xi( x ) ,
  \; 
  X_t( 0 ) = X_t( 1 ) = 0 
\end{equation*}
for $ t \in [0,T] $, $ x \in (0,1) $.
Then we show in Section~\ref{section8} that Theorem~\ref{Izrek1} 
ensures that
for every $ q, \iota \in (0, \infty ) $ 
there exists a real number $ K \in [0,\infty) $ 
such that for all $ N, n, m \in \N $
it holds that
\begin{equation}
\begin{split}
  \sup_{ t \in [0,T] }
  \big\| 
    X_t - Y_t^{ N, \{ e_1, e_2, \dots, e_n \} , \{ e_1, e_2, \dots, e_m \} } 
  \big\|_{ L^q(\P ; H) }
\leq
K
\left( 
  \frac{ 
    1 
  }{
    N^{ 
      ( \nicefrac{ 1 }{ 2 } - \iota ) 
    }
  }
  +
  \frac{ 1 }{
    n^{ ( 1 - \iota ) }
  }
  +
  \frac{ 1 }{ 
    m^{ ( 1 - \iota ) }
  }
\right)
.
\end{split}
\end{equation}
In particular, this shows that for every $ q, \iota \in (0, \infty ) $ 
there exists a real number $ K \in [0,\infty) $ 
such that for all $ n \in \N $
it holds that
$
  \sup_{ t \in [0,T] }
  \| 
    X_t - Y_t^{ n^2, \{ e_1, e_2, \dots, e_n \} , \{ e_1, e_2, \dots, e_n \} } 
  \|_{ L^q(\P ; H) }
\leq
  K \cdot 
  n^{ ( \iota - 1 ) }
$.

\subsection{Notation}
\label{Notation}

Throughout this article the following notation is used.
For a set $S$ we denote by $\operatorname{Id}_S\colon S\to S$ the identity mapping on $S,$
that is, it holds for all $x\in S$ that
$\operatorname{Id}_S(x)=x. $
For a set $A$ we denote by $\mathcal{P}(A)$ its power set, we denote by $|A|\in \{0, 1, 2, \ldots \} \cup \{ \infty\}$ the number of elements of $A,$ and we denote by $\mathcal{P}_0(A)$ the set given by $\mathcal{P}_0(A)=\{ B\in \mathcal{P}(A)\colon|B|<\infty\}.$
For measurable spaces $(\Omega_1, \mathcal{F}_1)$ and
$(\Omega_2, \mathcal{F}_2)$ we denote by $\mathcal{M}( \mathcal{F}_1 , \mathcal{F}_2 )$ the set of all $\mathcal{F}_1/\mathcal{F}_2$-measurable functions.
For topological spaces $(X, \tau_X)$ and $(Y, \tau_Y)$ we denote by
$\mathcal{C}(X,Y)$ the set of all continuous functions from $X$ to $Y.$ 
For a topological space $(X, \tau)$ we denote by $\mathcal{B}(X)$ the sigma-algebra of all Borel measurable sets in $X.$
Let 
$\lfloor \cdot \rfloor_h \colon \R \to \R, h\in (0,\infty),$
be the mappings with the property that for all $h\in (0,\infty), t\in \R$ it holds that
$
\lfloor t \rfloor_h = \max \big\{ (-\infty, t]\cap 
\{0, h, -h, 2h, -2h,\ldots \} \big\}
.
$
For a natural number $ d \in \N $ and 
a Borel measurable set 
$ A \in \mathcal{B}(\R^d) $
we denote by $\mu_A \colon \mathcal{B}(A)\to [0, \infty]$ the Lebesgue-Borel measure on $A.$
For a normed $ \R $-vector space $ ( V, \left\| \cdot \right\|_V ) $ and a 
real number $ \rho \in [0,\infty) $ we denote by $ L^{ \rho }( V ) $ the 
set given by $ L^{ \rho }( V ) = \big\{ A \in L(V) \colon \| A \|_{ L(V) 
} \leq \rho \big\} $.

\subsection{Setting}
\label{Main.setting}

Throughout this article the following setting is frequently used. Let
$\left( H, \langle \cdot, \cdot \rangle _H, \left \| \cdot \right\|_H \right), $
$\left( U, \langle \cdot, \cdot \rangle _U, \left\| \cdot \right \|_U \right) $
be separable $\R$-Hilbert spaces, 
let 
$\H \subseteq H$
be a non-empty orthonormal basis of $H,$ 
let $ \mathbb{U} \subseteq U $ be an orthonormal basis of $U,$
let
$ \lambda \colon \H \to \R $
be a function satisfying
$ \sup_{h \in \H} \lambda_h < 0,$
let
$A \colon D( A ) \subseteq H \to H$ 
be a linear operator such that
$ D(A) = \big \{
v \in H \colon \sum_{ h \in \H} \left | \lambda_h \langle h, v \rangle_H \right | ^2 <  \infty
\big\} $
and such that for all 
$v \in D(A)$
it holds that
$ A v = \sum_{ h \in \H} \lambda _h \langle h, v \rangle _H h,$
let
$(H_r, \langle \cdot, \cdot \rangle_{H_r}, \left \| \cdot \right \|_{H_r} ), r\in \R,$ be a family of interpolation spaces associated to $-A$
(see, e.g., Theorem and Definition $2.5.32$ in \cite{Jentzen2014SPDElecturenotes}),
let 
$ \alpha, \beta, \gamma \in [0, \infty), 
 T \in (0,\infty),$
$ a, c, C \in [1,\infty) $,
$ \delta \in [0, \gamma] $,
$ \theta\in (0, \tfrac{1}{4}] $, 
$ p \in [2,\infty) $, 
let
$( \Omega, \mathcal{ F }, \P)$
be a probability space with a normal filtration 
$( \mathcal{ F }_t )_{t \in [ 0, T]},$
let $\xi \in \mathcal{M}(\mathcal{F}_0, \mathcal{B}(H_\gamma)),$
and
let
$ \left( W_t \right)_{t \in [0, T]}$
be a cylindrical
$\operatorname{Id}_U$-Wiener process with respect to 
$ ( \mathcal{F}_t )_{t \in [0, T]}.$

\subsection{Acknowledgements}

We gratefully acknowledge Martin Hutzenthaler for bringing several typos into our notice.
This project has been supported through the SNSF-Research project $ 200021\_156603 $ ``Numerical 
approximations of nonlinear stochastic ordinary and partial differential equations''.

\section{Strong a priori moment bounds for 
nonlinearities-stopped schemes}
\label{section2}
In this section we establish strong a priori moment bounds for a class 
of nonlinearities-stopped exponential Euler approximations; see Lemma \ref{Lemma1}
below for the main result of Section \ref{section2}. Related arguments/results can, 
e.g., be found in Section 2 in Kurniawan~\cite{MasterRyan} and in 
Section 3 in Gy\"{o}ngy et al.\ \cite{SabanisSiskaGyongy2014Arxiv}.
\subsection{Setting}
\label{setting1}
Assume the setting in Section \ref{Main.setting},
let
$
F\in \mathcal{M} 
\big( \mathcal{B}(H_\gamma), 
\mathcal{B}(H) 
\big)
,
B\in \mathcal{M}
\big(
\mathcal{B}(H_\gamma),
\mathcal{B}(\HS(U, H) )
\big)
,
$
$N\in \N, K\in [0,\infty)$
satisfy 
$
K
=  
3(p-2)
+
2C^{\nicefrac{p}{2}}
+
2
\big [
T^{1 - 2\theta} 
+
\tfrac{p}{2}
T^{\frac{1}{2} - 2 \theta}
\big ]^{\nicefrac{p}{2}}
,
$
let $Y \colon  [0,T] \times \Omega \to H_\gamma$ be an $ ( \mathcal{F}_t )_{ t \in [0,T] } $-adapted stochastic process
such that for all $t\in [0,T]$ it holds $\P$-a.s.\,that
\begin{equation}
\label{Main.scheme}
\begin{split}
Y_t 
=
e^{tA} \xi
&
+
\int_0^t
e^{(t- \lfloor s \rfloor_{T/N})A}
\,
\1_{ \big \{ 
\| 
F  (Y_{\lfloor s \rfloor_{T/N}}  )
\|_H 
+ 
 \| B( Y_{\lfloor s \rfloor_{T/N}} )  \|_{HS(U, H)} 
\,
\leq \left( \frac{N}{T} \right )^\theta \big \}}
F( Y_{\lfloor s \rfloor_{T/N}} ) 
\,
ds
\\
&
+
\int_0^t e^{(t- \lfloor s \rfloor_{T/N})A}
\,
\1_{ \big\{ \| F(Y_{\lfloor s \rfloor_{T/N}} )\|_H 
+ 
 \| B( Y_{\lfloor s \rfloor_{T/N}} )  \|_{HS(U, H)} 
\leq \left ( \frac{N}{T} \right )^\theta \big \}}
\,
 B(Y_{\lfloor s \rfloor_{T/N}}) \,dW_s
 ,
\end{split}
\end{equation}
and
assume that for all $x \in H_\gamma$ it holds that
$
\langle x, F(x)\rangle_H
+
\tfrac{p-1}{2}
\left \|B (x) \right \|_{\HS(U,H)}^2
\leq
\d \, (1 + \left \| x \right\|_H^2).
$
\subsection{Strong a priori moment bounds for 
nonlinearities-stopped schemes}
\begin{lemma}
\label{alternative}
Assume the setting in Section \ref{setting1} and
let $x\in H, h\in (0,T], t\in [0,h].$
Then 
\begin{equation}
\label{prop50}
\begin{split}
&
\E
\bigg [
\Big \| 
e^{tA}
\Big (x+ \, 
\1_{ \left \{ \left \|F(x)\right \|_H+\left \|B(x) \right \|_{HS(U,H)}\leq  h^{-\theta} \right\} }\, 
\left [t F(x)+
\smallint\nolimits_0^t B(x) \, dW_s
\right]
\Big )
\Big \|_H^p
\bigg]
\leq
e^{ K t } \left( \| x \|_H^p + K t \right)
.
\end{split}
\end{equation}
\end{lemma}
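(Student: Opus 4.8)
The plan is to peel off the semigroup and the (deterministic) indicator, reduce to a single elementary It\^o process with constant coefficients, and then run a Gr\"onwall argument. First observe that $\|e^{tA}\|_{L(H)}\le1$ for every $t\in[0,\infty)$ (a consequence of $\sup_{h\in\H}\lambda_h<0$) and that, since the point $x$ is fixed, the event $\{\|F(x)\|_H+\|B(x)\|_{\HS(U,H)}\le h^{-\theta}\}$ is deterministic. If this event fails, then the left-hand side of \eqref{prop50} is simply $\|e^{tA}x\|_H^p\le\|x\|_H^p\le e^{Kt}(\|x\|_H^p+Kt)$, so we are done. Hence I would assume $\|F(x)\|_H+\|B(x)\|_{\HS(U,H)}\le h^{-\theta}$; then, using $\|e^{tA}\|_{L(H)}\le1$ once more, it suffices to prove $\E[\|Z_t\|_H^p]\le e^{Kt}(\|x\|_H^p+Kt)$, where $Z_s:=x+sF(x)+\int_0^s B(x)\,dW_r$ for $s\in[0,t]$. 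Since $sF(x)=\int_0^s F(x)\,dr$, the process $(Z_s)_{s\in[0,t]}$ satisfies $dZ_s=F(x)\,ds+B(x)\,dW_s$, $Z_0=x$, i.e.\ it is an It\^o process with constant (and, on our event, bounded) drift $F(x)\in H$ and diffusion $B(x)\in\HS(U,H)$.

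Next I would apply It\^o's formula to $s\mapsto\|Z_s\|_H^p$, which is legitimate because $y\mapsto\|y\|_H^p$ is twice continuously differentiable on $H$ for $p\ge2$, with gradient $p\|y\|_H^{p-2}y$ and with $\operatorname{tr}\!\big(B(x)^{\ast}\,\nabla^2(\|\cdot\|_H^p)(y)\,B(x)\big)\le p(p-1)\|y\|_H^{p-2}\|B(x)\|_{\HS(U,H)}^2$ (the last bound from $\|B(x)^{\ast}y\|_U\le\|B(x)\|_{\HS(U,H)}\|y\|_H$). Taking expectations removes the (genuine) martingale term and yields, for all $t\in[0,h]$,
\[
\E[\|Z_t\|_H^p]\le\|x\|_H^p+p\,\E\!\int_0^t\|Z_s\|_H^{p-2}\big(\langle Z_s,F(x)\rangle_H+\tfrac{p-1}{2}\|B(x)\|_{\HS(U,H)}^2\big)\,ds.
\]
Then I would decompose $\langle Z_s,F(x)\rangle_H=\langle x,F(x)\rangle_H+s\|F(x)\|_H^2+\langle\int_0^s B(x)\,dW_r,F(x)\rangle_H$, bound $\langle x,F(x)\rangle_H+\tfrac{p-1}{2}\|B(x)\|_{\HS(U,H)}^2\le C(1+\|x\|_H^2)$ by the coercivity hypothesis (recentring via $1+\|x\|_H^2\le 2+2\|Z_s\|_H^2+2\|Z_s-x\|_H^2$), and estimate the two remaining summands using the stopping bounds $\|F(x)\|_H,\|B(x)\|_{\HS(U,H)}\le h^{-\theta}$, the inequalities $s\le h\le T$ and $\theta\le\tfrac14$, and a moment bound of the type $\E[\|\int_0^s B(x)\,dW_r\|_H^p]\le(\tfrac p2)^p(s\|B(x)\|_{\HS(U,H)}^2)^{p/2}$ for the stochastic convolution. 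In particular $s\|F(x)\|_H^2\le T^{1-2\theta}$, while H\"older's inequality together with $\tfrac12-2\theta\ge0$ converts $|\langle\int_0^s B(x)\,dW_r,F(x)\rangle_H|\le h^{-\theta}\|\int_0^s B(x)\,dW_r\|_H$ into a term controlled by $T^{1/2-2\theta}(1+\E[\|Z_s\|_H^p])$. Reducing all remaining powers of $\|Z_s\|_H$ to $\|Z_s\|_H^p$ via Young's inequality and collecting constants gives the integral inequality $\E[\|Z_t\|_H^p]\le\|x\|_H^p+K\int_0^t(1+\E[\|Z_s\|_H^p])\,ds$ for $t\in[0,h]$.

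Finally, since $t\mapsto\E[\|Z_t\|_H^p]$ is finite on $[0,h]$, Gr\"onwall's lemma gives $\E[\|Z_t\|_H^p]\le e^{Kt}\|x\|_H^p+(e^{Kt}-1)$, and the elementary inequality $1-e^{-Kt}\le Kt$ upgrades the additive term to $Kt\,e^{Kt}$, producing exactly $\E[\|Z_t\|_H^p]\le e^{Kt}(\|x\|_H^p+Kt)$, which together with the reductions above is \eqref{prop50}. The hard part will be the second paragraph: one has to carry out the Young/H\"older bookkeeping precisely enough to reproduce the stated constant $K=3(p-2)+2C^{p/2}+2[T^{1-2\theta}+\tfrac p2 T^{1/2-2\theta}]^{p/2}$, and in particular one must keep the factor $h^{-\theta}$ attached to $\|\int_0^s B(x)\,dW_r\|_H$ rather than splitting it off by Young's inequality --- otherwise negative powers of $h$ appear --- so that the bound $\sqrt s\,\|B(x)\|_{\HS(U,H)}\le h^{1/2-\theta}\le T^{1/2-\theta}$, which is exactly where $\theta\le\tfrac14$ enters, can be used.
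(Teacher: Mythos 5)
Your proposal is correct and follows essentially the same route as the paper's proof: reduce via the contractivity of $e^{tA}$ and the deterministic indicator to the constant-coefficient It\^o process, apply It\^o's formula to $\|\cdot\|_H^p$, invoke the coercivity hypothesis, control the extra drift terms $s\|F(x)\|_H^2$ and $\sqrt{s}\,\|B(x)\|_{\HS(U,H)}\|F(x)\|_H$ through the stopping bound and $\theta\le\tfrac14$, and finish with Young's inequality and Gr\"onwall. The only cosmetic difference is that the paper carries $\|x\|_H^p$ through Gr\"onwall and absorbs the resulting factor via $1+2tC^{\nicefrac{p}{2}}\le e^{2tC^{\nicefrac{p}{2}}}$ rather than recentring $\|x\|_H^2$ around $\|Z_s\|_H^2$.
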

\begin{proof}[Proof
of Lemma~\ref{alternative}]
W.l.o.g.\ we assume that
$
\left \| F(x) \right \|_H
+
\left \| B(x) \right \|_{\HS( U, H ) }
\leq
 h^{-\theta}$
(otherwise \eqref{prop50} is clear).
Let $Y^x \colon [0, T] \times \Omega \to H, x\in H,$ be stochastic processes
such that for all $t\in [0, T], x\in H$ it holds $\P$-a.s.\,that
$Y_t^x 
= 
x 
+ 
\int_0^t F(x)\, ds + \int_0^t B(x)\, dW_s
$
and
let $f \colon H \to \R$ be the function with the property that for all $x\in H$ it holds that
$f(x) = \left \| x \right \|_H^p$.
Then $f$ 
is twice continuously differentiable and for all
$ x, v, w \in H $ it holds that
\begin{equation}
\begin{split}
f'(x)(v)
&=
p
\left \| x \right \|_H^{p-2}
\langle x, v \rangle_H, 
\\
f''(x)(v,w)
&= 
\begin{cases}
p
\left \| x \right \|_H^{p-2}
\langle v, w \rangle_H
+
p(p-2)
\left\| x \right\|_H^{p-4}
\langle x, v \rangle_H
\langle x, w \rangle_H
,
& x\neq 0 \\
p
\left \| x \right \|_H^{p-2}
\langle v, w \rangle_H
,
& x = 0
.
\end{cases}
\end{split}
\end{equation}
It\^o's formula hence 
proves that it holds $ \P $-a.s.\,that
\begin{align}
\label{Ito}
\nonumber
\left \| Y_t^x \right \|_H^p
&
=
\left \| Y_0^x \right \|_H^p
+
\int_0^t
f'(Y_s^x) 
F(x)
\,ds
+
\tfrac{1}{2}
\sum_{u \in \mathbb{U}}
f''( Y_s^x )(B(x) u, B(x)u)
\, ds
+
\int_0^t 
f'(Y_s^x) B(x)\, dW_s
\\
&
=
\left \| Y_0^x \right \|_H^p
+
\int_0^t
p
\left \| Y_s^x \right \|_H^{p-2}
\langle Y_s^x, F(x) \rangle_H
\,
ds
+
\int_0^t 
p
\left \| Y_s^x \right \|_H^{p-2}
\langle Y_s^x, B(x)\,dW_s \rangle_H
\\
\nonumber
&
\quad
+
\tfrac{1}{2}
\int_0^t
\sum_{u\in \mathbb{U}} 
\Big[
p
\left \| Y_s^x \right \|_H^{p-2}
\langle B(x)u, B(x)u \rangle_H
+
p(p-2)
\,
\1_{\{ Y_s^x \neq 0 \}}
\,
\left \| Y_s^x \right \|_H^{p-4}
| \langle Y_s^x, B(x)u \rangle_H | ^2
\Big]
\, ds
.
\end{align}
The triangle inequality, Fubini's theorem, and the H\"older inequality therefore show that
\begin{equation}
\label{FIRST}
\begin{split}
&
\E
[
\left \| Y_t^x \right \|_H^p
]
\\
&
\leq
\E
[ \left \| Y_0^x \right \|_H^p]
+
p
\int_0^t
\E
\!
\left [
\left \| Y_s^x \right \|_H^{p-2}
\langle Y_s^x, F(x) \rangle_H
\right]\!
\, ds
+
\tfrac{p(p-1)}{2}
\int_0^t
\E
\Big[
\! \left \| Y_s^x \right \|_H^{p-2}
\smallsum_{u \in \mathbb{U}}
\left \|
B(x) u
\right \|_H^2\!
\Big ]
\,
ds
\\
&
=
\| x \|_H^p
+
p
\left[
\langle x, F(x) \rangle_H
+
\tfrac{p-1}{2}
\left \| B(x) \right \|_{\HS(U, H)}^2
\right]
\!
\int_0^t
\E\!
\left [
\left \| Y_s^x \right\|_H^{p-2}
\right ]
\, 
ds
\\
&
\quad
+
p
\int_0^t
\E
\!
\left[
\left \| Y_s^x \right \|_H^{p-2}
\left < F(x) s + \smallint\nolimits_0^s B(x)\, dW_r, F(x) \right >_H
\right]\!
\, 
ds
\\
&
\leq
\| x \|_H^p
+
p
\left [
\langle x, F(x) \rangle_H
+
\tfrac{p-1}{2}
\left \| B(x) \right \|_{\HS(U, H)}^2
\right ]^+
\!
\int_0^t
\left \| Y_s^x \right \|_{L^p (\P ; H)}^{p-2} ds
\\
&
\quad
+
p
\int_0^t
\left \| Y_s^x \right \|_{L^p(\P; H)}^{p-2}
\left \|
\left <
F(x) s + \smallint\nolimits_0^s B(x)\, dW_r,
F(x)
\right>_H
\right \|_{L^{\nicefrac{p}{2}}(\P; H)}
\,
ds
.
\end{split}
\end{equation}
In the next step we apply the Cauchy-Schwarz inequality and 
the Burkholder-Davis-Gundy type inequality in Lemma~$7.7$ in Da Prato \& Zabczyk \cite{dz92}
to \eqref{FIRST} to obtain that
\begin{equation}
\begin{split}
\E\!
\left [
\left \| Y_t^x \right \|_H^p
\right]
&
\leq
\left \| x \right \|_H^p
+
p \d
\left [
1 
+
\left \| x \right \|_H^2
\right ]
\int_0^t
\left \|
Y_s^x
\right \|_{L^p (\P; H)}^{p-2}\!
\, ds
\\
&
\quad
+
p
\int_0^t
\left \| Y_s^x \right \|_{L^p (\P; H)}^{p-2}
\left [
\left \| F(x) \right \|_H^2 s
+
\sqrt{s} \tfrac{p}{2}
\left \| B(x)\right \|_{\HS(U, H)}
\left \| F(x) \right \|_H
\right ]
\, 
ds
.
\end{split}
\end{equation}
Young's inequality hence proves that
\begin{align}
\nonumber
&
\E\!
\left [
\left \| Y_t^x \right \|_H^p
\right]
\leq
\left \| x \right \|_H^p
+
p
\int_0^t
\left [
\tfrac{2}{p}
\d^{\frac{p}{2}}
+
\tfrac{p-2}{p}
\left \|
Y_s^x
\right \|_{L^p (\P; H)}^p
\right ]
\,
ds
+
p
\int_0^t
\left [
\tfrac{2}{p}
\d^{\frac{p}{2}}
\left \| x \right \|_H^p
+
\tfrac{p-2}{p}
\left \|
Y_s^x
\right \|_{L^p (\P; H)}^p
\right ]\!
\,
ds
\\
\nonumber
&
\quad
+
p
\int_0^t
\tfrac{p-2}{p}
\left \| Y_s^x \right \|_{L^p (\P; H)}^p
+
\tfrac{2}{p}
\left [
\left \| F(x) \right \|_H^2 s
+
\sqrt{s} 
\tfrac{p}{2}
\left \| B(x) \right \|_{\HS(U,H)}
\left \| F(x) \right \|_H
\right ]^{\frac{p}{2}}\!
\, 
ds
\\
\nonumber
&
=
\left \| x \right \|_H^p
+
\int_0^t
2 \d^{\frac{p}{2}}
+
(p-2)
\left \|
Y_s^x
\right \|_{L^p (\P; H)}^p
\,
ds
+
\int_0^t
2
\left \| x \right \|_H^p
\d^{\frac{p}{2}}
+
(p-2)
\left \|
Y_s^x
\right \|_{L^p (\P; H)}^p
\,
ds
\\
&
\quad
+
\int_0^t
(p-2)
\left \| Y_s^x \right \|_{L^p (\P; H)}^p
+
2
\left [
\left \| F(x) \right \|_H^2 s
+
\sqrt{s} 
\tfrac{p}{2}
\left \| B(x)\right \|_{\HS(U, H)} 
\left \| F(x) \right \|_H
\right ]^{\frac{p}{2}}\!
\, 
ds
\\
\nonumber
&
=
\left \| x \right \|_H^p
+
3(p-2)
\int_0^t
\left \| Y_s^x \right \|_{L^p (\P; H)}^p\!
\,
ds
\\
\nonumber
&
\quad
+
\int_0^t
2 \d^{\frac{p}{2}}
( 1 + \left \| x \right \|_H^p )
+
2
\left [
\left \| F(x) \right \|_H^2 s
+
\sqrt{s} 
\tfrac{p}{2}
\left \| B(x) \right \|_{\HS(U, H)}
\left \| F(x) \right \|_H
\right ]^{\frac{p}{2}}\!
\,
ds
\\
\nonumber
&
\leq
\left \| x \right \|_H^p
+
3(p-2)
\int_0^t
\left \| Y_s^x \right \|_{L^p(\P; H)}^p\!
\,
ds
+
\left(
2 \d^{\frac{p}{2}}
\left [
1
+
\left \| x \right \|_H^p
\right ] 
+
2
\left [
t^{1 - 2\theta} 
+
\tfrac{p}{2}
t^{\frac{1}{2} - 2 \theta}
\right ]^{\frac{p}{2}}
\right)
t
.
\end{align}
Gronwall's lemma therefore shows that
\begin{equation}
\label{eq19}
\begin{split}
\E\!
\left [
\| Y_t^x \|_H^p 
\right ]
&
\leq
e^{3(p-2) t}
\left [
\left \| x \right \|_H^p
+
t
\left(
2 \d^{\frac{p}{2}}
\left [
1
+
\left \| x \right \|_H^p
\right ] 
+
2
\big [
T^{1 - 2\theta} 
+
\tfrac{p}{2}
T^{\frac{1}{2} - 2 \theta}
\big ]^{\frac{p}{2}}
\right)
\right ]
\\
&
=
e^{3(p-2) t}
\left(
\left (
1
+
2
t
\d^{\nicefrac{p}{2}}
\right)
\left \| x \right \|_H^p
+
t
\left[
2 \d^{\frac{p}{2}}
+
2
\big [
T^{1 - 2\theta} 
+
\tfrac{p}{2}
T^{\frac{1}{2} - 2 \theta}
\big ]^{\frac{p}{2}}
\right ]
\right)
.
\end{split}
\end{equation}
Note that for all $t\in [0,T]$ it holds that $1+2tC^{\nicefrac{p}{2}}\leq e^{2tC^{\nicefrac{p}{2}}}.$
Combining this with \eqref{eq19} shows that for all $t\in [0,T]$ it holds that
\begin{equation}
\begin{split}
&
\E\!
\left [
\left \| Y_t^x \right \|_H^p 
\right ]
\leq
e^{(3(p-2)+2C^{\nicefrac{p}{2}})t}
\left(
\| x  \|_H^p
+
t
\left(
2C^{\nicefrac{p}{2}}
+
2
\big [
T^{1 - 2\theta} 
+
\tfrac{p}{2}
T^{\frac{1}{2} - 2 \theta}
\big ]^{\frac{p}{2}}
\right )
\right)
.
\end{split}
\end{equation}
The proof of Lemma \ref{alternative} is thus completed.
\end{proof}
\begin{lemma}
\label{Lemma1}
Assume the setting in Section \ref{setting1} and let $t\in [0,T].$
Then
\begin{equation}
\label{apriori}
\E
\!
\left [
\left \|
Y_t
\right \|_H^p
\right ]
\leq
\left (
\E
\!
\left [
\left \|
Y_0
\right \|_H^p
\right ]
+
K t
\right )
e^{K t}
.
\end{equation}
\end{lemma}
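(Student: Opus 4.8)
The plan is to reduce \eqref{apriori} to the one-step estimate established in Lemma~\ref{alternative} by conditioning on the grid points of the time discretization and then running a discrete Gronwall recursion. Throughout write $ h = T / N $ and $ t_k = k h $ for $ k \in \{ 0, 1, \dots, N \} $, and fix $ t \in [ 0, T ] $ together with some $ k \in \{ 0, 1, \dots, N \} $ such that $ t \in [ t_k, t_{ k + 1 } ] $ (with the convention $ t_{ N + 1 } = T $). First I would rewrite $ Y_t $ in semigroup-iteration form: splitting the two integrals in \eqref{Main.scheme} at $ t_k $, using that $ \lfloor s \rfloor_{ T / N } = t_k $ for Lebesgue-almost every $ s \in [ t_k, t ] $, the semigroup identity $ e^{ ( t - \lfloor s \rfloor_{ T / N } ) A } = e^{ ( t - t_k ) A } \, e^{ ( t_k - \lfloor s \rfloor_{ T / N } ) A } $, and the identity $ ( N / T )^{ \theta } = h^{ - \theta } $, one checks that it holds $ \P $-a.s.\ that
\[
  Y_t
  =
  e^{ ( t - t_k ) A }
  \Big(
    Y_{ t_k }
    +
    \1_{ \{ \| F( Y_{ t_k } ) \|_H + \| B( Y_{ t_k } ) \|_{ \HS(U,H) } \leq h^{ - \theta } \} }
    \big[ ( t - t_k ) \, F( Y_{ t_k } ) + \smallint\nolimits_{ t_k }^{ t } B( Y_{ t_k } ) \, dW_s \big]
  \Big) .
\]

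Next I would pass to conditional expectations given $ \mathcal{F}_{ t_k } $. The random variable $ Y_{ t_k } $ is $ \mathcal{F}_{ t_k } $-measurable and $ H_\gamma $-valued (hence $ H $-valued), and the shifted process $ ( W_{ t_k + r } - W_{ t_k } )_{ r \in [ 0, T - t_k ] } $ is a cylindrical $ \operatorname{Id}_U $-Wiener process independent of $ \mathcal{F}_{ t_k } $. Hence, by the standard factorization property of conditional expectations with respect to an independent source of randomness, it holds $ \P $-a.s.\ that $ \E[ \| Y_t \|_H^p \mid \mathcal{F}_{ t_k } ] = g( Y_{ t_k } ) $, where $ g \colon H \to [ 0, \infty ] $ is the deterministic function given for $ x \in H $ by
\[
  g( x )
  =
  \E\!\Big[
    \big\|
      e^{ ( t - t_k ) A }
      \big(
        x + \1_{ \{ \| F( x ) \|_H + \| B( x ) \|_{ \HS(U,H) } \leq h^{ - \theta } \} }
        [ ( t - t_k ) \, F( x ) + \smallint\nolimits_0^{ t - t_k } B( x ) \, dW_s ]
      \big)
    \big\|_H^p
  \Big] .
\]
Since $ h \in ( 0, T ] $ and $ t - t_k \in [ 0, h ] $, Lemma~\ref{alternative} (with $ x $, $ h $ and $ t - t_k $ here in the roles of $ x $, $ h $ and $ t $ there) yields that $ g( x ) \leq e^{ K ( t - t_k ) } ( \| x \|_H^p + K ( t - t_k ) ) $ for every $ x \in H $. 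Taking expectations in $ \E[ \| Y_t \|_H^p \mid \mathcal{F}_{ t_k } ] = g( Y_{ t_k } ) $ and inserting this bound then produces the one-step estimate
\[
  \E\big[ \| Y_t \|_H^p \big]
  \leq
  e^{ K ( t - t_k ) } \big( \E\big[ \| Y_{ t_k } \|_H^p \big] + K ( t - t_k ) \big) .
\]

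Finally I would iterate this estimate over the time grid. Specializing to $ t = t_{ k + 1 } $ gives $ \E[ \| Y_{ t_{ k + 1 } } \|_H^p ] \leq e^{ K h } ( \E[ \| Y_{ t_k } \|_H^p ] + K h ) $, and a straightforward induction on $ k $, using $ K \in [ 0, \infty ) $ (so that $ e^{ K h } \leq e^{ K t_{ k + 1 } } $), shows that $ \E[ \| Y_{ t_k } \|_H^p ] \leq ( \E[ \| Y_0 \|_H^p ] + K t_k ) \, e^{ K t_k } $ for every $ k \in \{ 0, 1, \dots, N \} $; if $ \E[ \| Y_0 \|_H^p ] = \infty $ the claim is trivial, and otherwise the induction also propagates finiteness of the $ p $-th moments. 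Inserting this grid bound into the one-step estimate displayed above and using $ t - t_k \leq t $ together with $ K \geq 0 $ yields $ \E[ \| Y_t \|_H^p ] \leq ( \E[ \| Y_0 \|_H^p ] + K t ) \, e^{ K t } $, which is precisely \eqref{apriori}. I expect the only genuinely delicate point to be the conditioning step, i.e.\ rigorously justifying the identity $ \E[ \| Y_t \|_H^p \mid \mathcal{F}_{ t_k } ] = g( Y_{ t_k } ) $ --- this requires the joint measurability of the relevant functional of $ x $ and the driving path, the independence of the shifted Wiener increments from $ \mathcal{F}_{ t_k } $, and the identification of $ g $ with the quantity bounded in Lemma~\ref{alternative}; the semigroup rewriting of $ Y_t $ and the discrete Gronwall recursion are routine.
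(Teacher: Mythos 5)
Your proposal is correct and follows essentially the same route as the paper: iterate the one-step bound of Lemma~\ref{alternative} over the grid points and then apply it once more on the final partial subinterval. The only difference is presentational --- you make explicit the conditioning on $\mathcal{F}_{t_k}$ and the independence of the shifted Wiener increments needed to pass from the deterministic-$x$ statement of Lemma~\ref{alternative} to the random initial value $Y_{t_k}$, a step the paper applies implicitly, and your cleaner closed-form induction replaces the paper's explicit geometric sum $K\tfrac{T}{N}\sum_{j=1}^n e^{TKj/N}$, which is subsequently bounded by $Kte^{Kt}$ in the same way.
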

\begin{proof}[Proof
of Lemma~\ref{Lemma1}]
Lemma \ref{alternative} implies that
for all $n\in \{ 1,2,\ldots, N\}$
it holds that
\begin{equation}
\begin{split}
&
\E
\Big[ \big\|Y_{\frac{n}{N}T} \big\|_H^p \Big]
\leq
e^{ \nicefrac{ K T}{N} }
\E
\!
\left(
\left [
\big \| Y_{\frac{n-1}{N}T} \big \|_H^p
\right]
+
K
\tfrac{T}{N}
\right )
\\
&
\leq
e^{ \nicefrac{ K T}{N} }
\left (
\left(
e^{\nicefrac{K T}{N} }
\E
\!
\left [
\big \| Y_{\frac{n-2}{N}T} \big \|_H^p
\right ]
+
K
\tfrac{T}{N}
\right)
+
K
\tfrac{T}{N}
\right )
\leq
\ldots
\leq
\E
\!
\left [
\left \|
Y_0
\right \|_H^p
\right ]
e^{\nicefrac{T K n}{N} }
+
K \tfrac{T}{N} 
\sum_{j=1}^{n}
e^{\nicefrac{T K j}{N}}
.
\end{split}
\end{equation}
Again Lemma \ref{alternative} hence proves that
\begin{equation}
\begin{split}
\E
\!
\left [
\left\|
Y_t 
\right \|_H^p
\right ]
&
=
\E
\bigg [
\Big \|
e^{(t-\lfloor t \rfloor_{T/N} )A}
\Big(
Y_{\lfloor t \rfloor_{T/N} }
+
\,
\1_{ \big \{  \| F(Y_{\lfloor t \rfloor_{T/N} } ) \|_H 
+ 
\| B(Y_{\lfloor t \rfloor_{T/N}})  \|_{HS(U, H)} 
\,
\leq \left ( \frac{N}{T} \right)^\theta \big \}}
\,
\\
&
\quad
\cdot
\big [
( t - \lfloor t \rfloor_{T/N} )
F( Y_{\lfloor t \rfloor_{T/N} } ) 
+
\smallint\nolimits_{\lfloor t \rfloor_{T/N}}^t
B(Y_{\lfloor t \rfloor_{T/N}})\, dW_s
\big ]
\Big)
\Big \|_H^p
\bigg]
\\
&
\leq
e^{K ( t - \lfloor t \rfloor_{T/N} )}
\left(
\E
\!
\left [
\big \| Y_{\lfloor t \rfloor_{T/N} } \big \|_H^p
\right ]
+
K
( t - \lfloor t \rfloor_{T/N} )
\right)
\\
&
\leq
e^{K(t-\lfloor t \rfloor_{T/N})}
\bigg[
\E
\!
\left [
\left \|
Y_0
\right \|_H^p
\right ]
e^{K \lfloor t \rfloor_{T/N} }
+
K \tfrac{T}{N} 
\sum_{j=1}^{ \lfloor t \rfloor_{T/N} \frac{N}{T}  }
e^{\nicefrac{TKj}{N} }
+
K
( t - \lfloor t \rfloor_{T/N} )
\bigg]
\\
&
\leq
\E
\!
\left [
\left \|
Y_0
\right \|_H^p
\right ]
e^{K t}
+ 
K t e^{K t} 
.
\end{split}
\end{equation}
The proof of Lemma \ref{Lemma1} is thus completed.
\end{proof}
\section{Strengthened strong a priori moment bounds based on bootstrap-type 
arguments}
\label{section3}
In this section we use the strong a priori moment bounds established in 
Section \ref{section2} to derive appropriately strengthened strong a priori moment 
bounds for numerical approximation processes and solution processes of 
SPDEs; see Lemma \ref{bootstrap} and Lemma \ref{more.improved} below. The proofs of Lemma \ref{bootstrap} and 
Lemma \ref{more.improved} are based on suitable bootstrap-type arguments. Bootstrap-type 
arguments of this kind have been intensively used in the literature to 
establish regularity properties of solutions of (stochastic) evolution 
equations.
\subsection{Setting}
\label{setting2}
Assume the setting in Section \ref{Main.setting}, 
let
$
F\in \mathcal{M} 
\big( \mathcal{B}(H_\gamma), 
\mathcal{B}(H) 
\big)
,
B\in \mathcal{M}
\big(
\mathcal{B}(H_\gamma),
\mathcal{B}(\HS(U, H) )
\big)
,
$
$\kappa \in \mathcal{M}\big(\mathcal{B}([0,T]), \mathcal{B}([0,T])\big)$
satisfy that for all $s\in[0,T]$ it holds that $\kappa(s)\leq s,$
and
let $Y, Z \colon  [0,T] \times \Omega \to H_\gamma$ be $ ( \mathcal{F}_t )_{ t \in [0,T] } $-adapted stochastic processes
such that for all $t\in [0,T]$ it holds $\P$-a.s.\,that
$
\int_0^t \|e^{(t-\kappa(s))A}F(Z_s)\|_H 
+ 
\|e^{(t-\kappa(s))A}B(Z_s)\|_{\HS(U,H)}^2
\,ds
<\infty$ and 
\begin{equation}
\begin{split}
Y_t 
=
e^{tA} \xi
&
+
\int_0^t
e^{(t- \kappa(s))A}
F( Z_s ) 
\,
ds
+
\int_0^t e^{(t- \kappa(s) )A}
 B(Z_s) \,dW_s
.
\end{split}
\end{equation}
\subsection{A first bootstrap-type argument for a priori bounds}
\begin{lemma}
\label{bootstrap}
Assume the setting in Section \ref{setting2}, 
let $ t \in [0,T] $,
assume that
$ \gamma < \min\{ 1 - \alpha , \nicefrac{ 1 }{ 2 } - \beta \} $,
and 
assume that for all $ x \in H_\gamma$ it holds that
$
  \max\{ 
    \left\| F(x) \right\|_{ H_{ - \alpha } } 
    ,
    \left\| B(x) \right\|_{\HS(U, H_{-\beta})}
  \}
  \leq C ( 1 + \left \| x \right \|_H^a )
$.
Then 
\begin{equation}
\label{improved.bound}
\begin{split}
\| Y_t \|_{L^p(\P; H_\gamma)}
&
\leq
\left \| \xi \right \|_{L^p(\P; H_\gamma)}
+
C
\Big[
\tfrac{ t^{1-(\gamma+\alpha)}}{1-(\gamma+\alpha)} 
+
\sqrt{\tfrac{p(p-1)}{2(1-2(\gamma + \beta))}}
t^{\nicefrac{1}{2}-(\gamma + \beta)}
\Big]
\big[1 + \sup\nolimits_{s\in [0,t]} \| Z_s \|_{L^{p a}(\P; H)}^{a} \big]
.
\end{split}
\end{equation}
\end{lemma}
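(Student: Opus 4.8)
The plan is to estimate the three summands in the mild-form representation of $Y_t$ separately, using the triangle inequality in $L^p(\P; H_\gamma)$. The first term, $e^{tA}\xi$, is immediately bounded by $\|\xi\|_{L^p(\P; H_\gamma)}$ since $e^{tA}$ is a contraction on each interpolation space $H_r$ (it has operator norm at most $1$ because $\sup_{h\in\H}\lambda_h<0$). For the deterministic integral $\int_0^t e^{(t-\kappa(s))A} F(Z_s)\,ds$, I would pull the $H_\gamma$-norm inside the integral via the triangle inequality for Bochner integrals, then use the smoothing estimate $\|e^{(t-\kappa(s))A} w\|_{H_\gamma} \le \|e^{(t-\kappa(s))A} w\|_{H_{\gamma}}$ applied to $w = F(Z_s) \in H_{-\alpha}$; more precisely, the standard analytic-semigroup bound $\|(-A)^{\gamma+\alpha} e^{r A}\|_{L(H)} \le r^{-(\gamma+\alpha)}$ together with $\kappa(s) \le s$ gives $\|e^{(t-\kappa(s))A} F(Z_s)\|_{H_\gamma} \le (t-s)^{-(\gamma+\alpha)} \|F(Z_s)\|_{H_{-\alpha}}$. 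Taking $L^p(\P)$-norms, invoking the growth hypothesis $\|F(x)\|_{H_{-\alpha}} \le C(1+\|x\|_H^a)$, and integrating $\int_0^t (t-s)^{-(\gamma+\alpha)}\,ds = \frac{t^{1-(\gamma+\alpha)}}{1-(\gamma+\alpha)}$ (finite since $\gamma+\alpha<1$) yields the first bracketed contribution.

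For the stochastic integral $\int_0^t e^{(t-\kappa(s))A} B(Z_s)\,dW_s$, the plan is to apply the Burkholder–Davis–Gundy-type inequality for Hilbert-space-valued stochastic integrals (e.g. Lemma~7.7 in Da Prato \& Zabczyk~\cite{dz92}, as already used in the proof of Lemma~\ref{alternative}), which bounds $\big\| \int_0^t e^{(t-\kappa(s))A} B(Z_s)\,dW_s \big\|_{L^p(\P; H_\gamma)}$ by $\sqrt{\tfrac{p(p-1)}{2}}\,\big( \int_0^t \| e^{(t-\kappa(s))A} B(Z_s) \|_{L^p(\P; \HS(U,H_\gamma))}^2 \, ds \big)^{1/2}$. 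Then, using the smoothing bound $\|(-A)^{\gamma+\beta} e^{rA}\|_{L(H)} \le r^{-(\gamma+\beta)}$ with $\kappa(s)\le s$, I get $\| e^{(t-\kappa(s))A} B(Z_s) \|_{\HS(U,H_\gamma)} \le (t-s)^{-(\gamma+\beta)} \| B(Z_s) \|_{\HS(U,H_{-\beta})}$, and the growth hypothesis $\|B(x)\|_{\HS(U,H_{-\beta})} \le C(1+\|x\|_H^a)$ turns this into $C(1+\|Z_s\|_{H}^a)$ in $L^p(\P)$. The exponent condition $\gamma+\beta<\tfrac12$ makes $\int_0^t (t-s)^{-2(\gamma+\beta)}\,ds = \frac{t^{1-2(\gamma+\beta)}}{1-2(\gamma+\beta)}$ finite, and taking the square root produces exactly the factor $\sqrt{\tfrac{p(p-1)}{2(1-2(\gamma+\beta))}}\, t^{1/2-(\gamma+\beta)}$.

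Finally I would collect the three estimates, bound $1+\|Z_s\|_{L^{pa}(\P;H)}^a$ uniformly by its supremum over $s\in[0,t]$ (noting that $\|Z_s\|_{L^p(\P;H)}^a \le \|Z_s\|_{L^{pa}(\P;H)}^a$ by Jensen/Lyapunov, so the $L^{pa}$ norm appearing in the statement is the right one to control the $a$-th power moment after taking $L^p$), factor out the common bracket $[1+\sup_{s\in[0,t]}\|Z_s\|_{L^{pa}(\P;H)}^a]$, and add the constants. The main technical point — though routine given the tools already in the paper — is the careful bookkeeping of the $L^p$-versus-$L^{pa}$ norms: one must use that $\E[(1+\|Z_s\|_H^a)^p]^{1/p} \le 1 + \E[\|Z_s\|_H^{pa}]^{1/p} = 1 + \|Z_s\|_{L^{pa}(\P;H)}^a$, which is exactly why the $a$-th moment exponent is multiplied into the integrability index. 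No genuine obstacle is expected; the result is a standard analytic-semigroup mild-solution regularity estimate, and the only care needed is to keep the constants matching the precise form stated in \eqref{improved.bound}.
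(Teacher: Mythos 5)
Your proposal is correct and follows essentially the same route as the paper's proof: the same triangle-inequality decomposition into initial value, drift integral, and stochastic integral, the same Burkholder--Davis--Gundy type inequality from Lemma~7.7 in Da Prato \& Zabczyk, the same smoothing estimates $\|e^{rA}\|_{L(H_{-\alpha},H_\gamma)}\leq r^{-(\gamma+\alpha)}$ combined with $\kappa(s)\leq s$, and the same $L^p$-versus-$L^{pa}$ bookkeeping via $\|1+\|Z_s\|_H^a\|_{L^p(\P;\R)}\leq 1+\|Z_s\|_{L^{pa}(\P;H)}^a$. No gaps.
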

\begin{proof}[Proof
of Lemma~\ref{bootstrap}]
The triangle inequality and the
Burkholder-Davis-Gundy type inequality in Lemma~7.7 in Da Prato \& Zabczyk \cite{dz92}
imply 
\begin{equation}
\begin{split}
\label{ena}
\| Y_t  \|_{L^p(\P; H_\gamma)}
&
\leq
\left \|  \xi \right\|_{L^p(\P; H_\gamma)}
+
\int_0^t \big \| e^{( t- \kappa(s)  )A} F(Z_s) \big \| _{L^p(\P; H_\gamma)}  \,ds 
\\
&
\quad
+
\left[
\tfrac{p(p-1)}{2}
\int_0^t \big \| e^{( t- \kappa(s)  )A} 
B(Z_s)\big\| _{L^p(\P; \HS(U ,H_\gamma))}^2 \,ds
\right]^{\nicefrac{1}{2}}
\!\!
.
\end{split}
\end{equation}
Note that, e.g., \cite[Theorem~2.5.34 and Lemma~2.5.35]{Jentzen2014SPDElecturenotes} proves that 
\begin{align}
\label{dva}
&
\int_0^t \big \| e^{( t- \kappa(s)  )A} F(Z_s) \big \| _{L^p(\P; H_\gamma)}  \,ds 
\leq
\int_0^t 
\left \| e^{(t- \kappa(s)  )A} \right \|_{L(H_{-\alpha}, H_\gamma)}
 \| F(Z_s) \|_{L^p(\P; H_{-\alpha})} ds
\\
&
\nonumber
\leq
\int_0^t (t-\kappa(s)  )^{-(\gamma+\alpha)} 
\| F(Z_s) \|_{L^p(\P; H_{-\alpha})} \, ds
\leq
C
\int_0^t (t-s)^{-(\gamma+\alpha)} 
\big \| 1+  \| Z_s \|_H^{a} \big \|_{L^p(\P; \R)} \, ds
\\
&
\nonumber
\leq
C
\int_0^t (t-s)^{-(\gamma+\alpha)}\!\left (1 +  \| Z_s \|_{ L^{p a} (\P; H)}^{ a}\right )\! \, ds
\leq
\tfrac{ C }{1-(\gamma+\alpha)}
t^{1-(\gamma+\alpha)}
\!
\left (
1
+
\sup\nolimits_{s\in [0,t]}
\| Z_s \|_{L^{p a}(\P; H)}^{ a }
\right)
\end{align}
and 
\begin{equation}
\begin{split}
\label{tri}
&
\int_0^t 
\big \| e^{( t-\kappa(s)  )A} B(Z_s)
\big\| _{L^p(\P; \HS(U, H_\gamma) )}^2
\,
ds
\leq 
\int_0^t 
\left \| e^{(t- \kappa(s)  )A} \right \|_{L(H_{-\beta}, H_{\gamma})}^2
 \| B( Z_s)  \|_{L^p(\P; \HS(U, H_{-\beta}) )}^2
\, ds
\\
&
\leq
\int_0^t
\tfrac{C^2}
{
(t- \kappa(s)  )^{2(\gamma + \beta)}
}
\big \| 1+  \|Z_s  \|_H^a \big \|_{L^p(\P; \R )}^2
\, ds
\leq
\tfrac{C^2}{1-2(\gamma+ \beta)}
t^{1-2(\gamma+ \beta)}
\left (
1
+
\sup\nolimits_{s\in [0,t]}  \| Z_s \|_{L^{p a}(\P; H)}^a
\right)^2
.
\end{split}
\end{equation}
Putting \eqref{dva} and \eqref{tri} into \eqref{ena} yields \eqref{improved.bound}.
The proof of Lemma \ref{bootstrap} is now completed.
\end{proof}

\subsection{A second bootstrap-type argument for a priori bounds}
\begin{lemma}
\label{more.improved}
Assume the setting in Section \ref{setting2},
let $ \eta\in[0,  \nicefrac{1}{2} ),$
$t\in (0,T],$
and
assume that
$
\sup\nolimits_{s\in [0,T]}
\big \| 
\|
F(Z_s)
\|_H
+
\|
B(Z_s)\|_{\HS(U,H)}
\big \|_{L^p(\P; \R)}
<\infty
.
$
Then 
it holds that 
$\P(Y_t\in \cap_{r\in (-\infty, \nicefrac{1}{2})} H_r)=1$ and
\begin{equation}
\label{sova0}
\begin{split}
\| Y_t \|_{L^p(\P; H_\eta)}
&
\leq
\| e^{tA} \xi \|_{L^p(\P; H_\eta)}
+ 
\tfrac{1}{1-\eta}
t^{1-\eta}
\sup\nolimits_{s\in [0,t]}
\|
F(Z_s)
\|_{L^p(\P; H)} 
\\
&
\quad
+
\sqrt{\tfrac{p(p-1)}{2(1 - 2 \eta)}}
t^{\nicefrac{1}{2} -\eta}
\sup\nolimits_{s\in [0,t]}
\|
B(Z_s)\|_{L^p(\P; \HS(U,H))}
.
\end{split}
\end{equation}
\end{lemma}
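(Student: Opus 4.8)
The plan is to estimate the three summands in the mild representation of $Y_t$ separately, very much as in the proof of Lemma~\ref{bootstrap}, but now exploiting the additional smoothing afforded by the semigroup when we integrate $F$ and $B$ composed with $Z$ (which are only assumed to lie in $H=H_0$ and $\HS(U,H)$, respectively). First I would apply the triangle inequality together with the Burkholder--Davis--Gundy type inequality in Lemma~7.7 in Da Prato \& Zabczyk \cite{dz92} to obtain
\begin{equation*}
\| Y_t \|_{L^p(\P; H_\eta)}
\leq
\| e^{tA} \xi \|_{L^p(\P; H_\eta)}
+
\smallint\nolimits_0^t \| e^{(t-\kappa(s))A} F(Z_s) \|_{L^p(\P; H_\eta)} \, ds
+
\Big[
\tfrac{p(p-1)}{2}
\smallint\nolimits_0^t \| e^{(t-\kappa(s))A} B(Z_s) \|_{L^p(\P; \HS(U, H_\eta))}^2 \, ds
\Big]^{1/2}
.
\end{equation*}

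Next I would bound the deterministic integrand using the standard smoothing estimate $\| e^{(t-\kappa(s))A} \|_{L(H, H_\eta)} \leq (t-\kappa(s))^{-\eta} \leq (t-s)^{-\eta}$ (valid since $\sup_{h\in\H}\lambda_h<0$; see, e.g., \cite[Theorem~2.5.34 and Lemma~2.5.35]{Jentzen2014SPDElecturenotes} and the hypothesis $\kappa(s)\le s$), which gives
\begin{equation*}
\smallint\nolimits_0^t \| e^{(t-\kappa(s))A} F(Z_s) \|_{L^p(\P; H_\eta)} \, ds
\leq
\smallint\nolimits_0^t (t-s)^{-\eta} \| F(Z_s) \|_{L^p(\P; H)} \, ds
\leq
\tfrac{ t^{1-\eta} }{ 1-\eta }
\sup\nolimits_{s\in[0,t]} \| F(Z_s) \|_{L^p(\P; H)}
,
\end{equation*}
and analogously $\| e^{(t-\kappa(s))A} B(Z_s) \|_{L^p(\P;\HS(U,H_\eta))}^2 \leq (t-s)^{-2\eta} \| B(Z_s) \|_{L^p(\P;\HS(U,H))}^2$, so that the stochastic term is bounded by $\sqrt{\tfrac{p(p-1)}{2(1-2\eta)}}\, t^{1/2-\eta} \sup_{s\in[0,t]} \| B(Z_s) \|_{L^p(\P;\HS(U,H))}$; here $\eta<\nicefrac12$ is exactly what makes the exponents $1-\eta$ and $\nicefrac12-\eta$ admissible in $\int_0^t(t-s)^{-\eta}ds$ and $\int_0^t(t-s)^{-2\eta}ds$. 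Summing the three bounds yields \eqref{sova0}. For the measurability/integrability claim $\P(Y_t\in\cap_{r<1/2}H_r)=1$, I would argue that for any fixed $\eta\in[0,\nicefrac12)$ the right-hand side of \eqref{sova0} is finite by the standing assumption $\sup_{s\in[0,T]}\big\|\,\|F(Z_s)\|_H+\|B(Z_s)\|_{\HS(U,H)}\big\|_{L^p(\P;\R)}<\infty$ (together with $\xi\in\mathcal{M}(\mathcal{F}_0,\mathcal{B}(H_\gamma))$ and $\gamma\ge0$, or more precisely the smoothing bound applied to $e^{tA}\xi$ for $t>0$), hence $Y_t\in H_\eta$ $\P$-a.s.; intersecting over a countable sequence $\eta_n\uparrow\nicefrac12$ gives the full statement.

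The main obstacle I anticipate is technical rather than conceptual: one has to be careful that the stochastic integral defining $Y_t$ really takes values in the smaller space $H_\eta$ and that the BDG estimate of Lemma~7.7 in \cite{dz92} applies in the $H_\eta$-valued setting — this needs the integrand $e^{(t-\kappa(s))A}B(Z_s)$ to be a well-defined $\HS(U,H_\eta)$-valued, predictable, square-integrable process, which follows from the smoothing bound for $s<t$ combined with $\int_0^t(t-s)^{-2\eta}ds<\infty$, but should be spelled out. A secondary point is justifying the interchange of supremum and integral (using that the bounds $(t-s)^{-\eta}$ and $(t-s)^{-2\eta}$ are integrable and that $\|F(Z_s)\|_{L^p(\P;H)}$, $\|B(Z_s)\|_{L^p(\P;\HS(U,H))}$ are bounded uniformly in $s$), but this is routine. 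Everything else reduces to the same semigroup-smoothing and BDG machinery already deployed in Lemma~\ref{bootstrap}.
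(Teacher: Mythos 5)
Your proposal is correct and follows essentially the same route as the paper: triangle inequality plus the Burkholder--Davis--Gundy type inequality of Lemma~7.7 in Da Prato \& Zabczyk, the smoothing estimate $\|(-A)^{\eta}e^{(t-\kappa(s))A}\|_{L(H)}\leq (t-\kappa(s))^{-\eta}\leq (t-s)^{-\eta}$ (using $\kappa(s)\leq s$), integration of the resulting singular kernels, and finiteness of the right-hand side for a countable sequence $\eta_n\uparrow\nicefrac{1}{2}$ to get $\P(Y_t\in\cap_{r<\nicefrac{1}{2}}H_r)=1$. The technical points you flag (identifying the stochastic integral as an $H_\eta$-valued integral before applying BDG, and the monotonicity step replacing $t-\kappa(s)$ by $t-s$) are handled implicitly in the paper's very terse proof, so your version is if anything slightly more explicit.
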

\begin{proof}[Proof of Lemma~\ref{more.improved}]
First observe that, e.g., Theorem $2.5.34$ in \cite{Jentzen2014SPDElecturenotes} proves that
for all $r\in [0,\nicefrac{1}{2})$ it holds that
\begin{equation}
\label{sova1}
\begin{split}
&
\int_0^t \| e^{ ( t - s ) A } F( Z_s ) \|_{ L^p( \P; H_r ) } ds 
\leq
\sup\nolimits_{s\in[0,T]}
\| F(Z_s)\|_H
\int_0^t \| (-A)^{r} e^{(t-s)A}\|_{L(H)} \, ds
\\
&
\leq
\sup\nolimits_{s\in[0,T]}
\| F(Z_s)\|_H
\int_0^t (t-s)^{-r} \, ds
=
\sup\nolimits_{s\in[0,T]}
\| F(Z_s)\|_H
\tfrac{t^{1-r}}{1-r}<\infty
\end{split}
\end{equation}
and
\begin{equation}
\label{sova2}
\begin{split}
&
\sqrt{ \int_0^t \| e^{ ( t - s ) A } B( 
Z_s ) \|_{\HS(U, H_r)}^2 ds } 
\leq
\sup\nolimits_{s\in [0,T]}
\| B(Z_s ) \|_{\HS(U, H)}
\sqrt{ \int_0^t \| (-A)^r e^{ ( t - s ) A }\|_{L(H)}^2 ds }
\\
&
\leq
\sup\nolimits_{s\in [0,T]}
\| B(Z_s ) \|_{\HS(U, H)}
\sqrt{ \int_0^t (t-s)^{-2r} ds } 
=
\sup\nolimits_{s\in [0,T]}
\| B(Z_s ) \|_{\HS(U, H)}
\tfrac{t^{\nicefrac{1}{2}-r}}{\sqrt{1-2r}}<\infty
.
\end{split}
\end{equation}
Next note that \eqref{sova1} and \eqref{sova2} prove that
$\P(Y_t\in \cap_{r\in (-\infty, \nicefrac{1}{2})} H_r)=1.$
Moreover, observe that \eqref{sova1}, \eqref{sova2}, and the
Burkholder-Davis-Gundy type inequality in Lemma $7.7$ in Da Prato \& Zabczyk~\cite{dz92}
imply \eqref{sova0}.
The proof of Lemma~\ref{more.improved} is thus completed.
\end{proof}
\section{Strong temporal error estimates for 
nonlinearities-stopped schemes}
\label{section4}
In this section we estimate temporal discretization errors of nonlinearities-stopped 
exponential Euler approximations; see Corollary~\ref{corollary.combined}
below. For this we introduce similar as in Jentzen \& Kurniawan~\cite[(11),
(70), (136)]{KurniawanJentzen2015Arxiv} suitable semilinear integrated counterparts of the
nonlinearities-stopped exponential Euler approximations. Then we estimate the 
differences of the nonlinearities-stopped exponential Euler approximations and
their semilinear counterparts in a straightforward way (see Lemma \ref{Lemma12}) and we employ the perturbation estimate in 
Theorem~2.10 in Hutzenthaler \& Jentzen~\cite{HutzenthalerJentzen2014PerturbationArxiv} to estimate the differences of
the solution process of the considered SPDE and the semilinear
integrated counterparts of the nonlinearities-stopped exponential Euler
approximations (see Lemma \ref{Lemma13}). Combining Lemma \ref{Lemma12} and Lemma \ref{Lemma13} with
the triangle inequality will then immediately result in Corollary \ref{corollary.combined}.
\subsection{Setting}
\label{setting3}
Assume the setting in Section \ref{Main.setting},
let 
$N\in \N,$
$
F\in \mathcal{M} 
\big( \mathcal{B}(H_\gamma), 
\mathcal{B}(H) 
\big)
,
B\in \mathcal{M}
\big(
\mathcal{B}(H_\gamma),
\mathcal{B}(\HS(U, H) )
\big)
,
$
assume that 
$\gamma < \min \{ 1-\alpha, \nicefrac{1}{2}-\beta\},$
let $X \colon [0,T]\times \Omega \to H_\gamma$ be 
an $(\mathcal{F}_t)_{t\in [0,T]}$-adapted stochastic process
with continuous sample paths
such that for all $t\in [0, T]$ it holds $\P$-a.s.\,that
\begin{equation}
X_t = e^{tA} \xi + \int_0^t e^{(t-s)A}F(X_s) \, ds
+
\int_0^t e^{(t-s)A} B(X_s) \, dW_s,
\end{equation}
let $Y \colon  [0,T] \times \Omega \to H_\gamma$ be an $(\mathcal{F}_t)_{t\in [0,T]}$-adapted stochastic process 
such that for all $t\in [0,T]$ it holds $\P$-a.s.\,that
\begin{equation}
\begin{split}
Y_t
=
e^{tA} \xi
&
+
\int_0^t
e^{(t- \lfloor s \rfloor_{T/N} )A}
\,
\1_{ \big \{ 
 \| F(Y_{\lfloor s \rfloor_{T/N} } ) \|_H 
+ 
\| B( Y_{\lfloor s \rfloor_{T/N} }  ) \|_{HS(U, H)} 
\,
\leq 
\left ( \frac{N}{T} \right )^\theta \big \}}
\,
F( Y_{\lfloor s \rfloor_{T/N} } ) 
\,
ds
\\
&
+
\int_0^t e^{(t- \lfloor s \rfloor_{T/N} )A}
\,
\1_{\big \{ 
\| F(Y_{\lfloor s \rfloor_{T/N} } )\|_H 
+ 
\| B(Y_{\lfloor s \rfloor_{T/N} } )  \|_{HS(U, H)} 
\leq 
\left ( \frac{N}{T} \right )^\theta \big\}}
\,
 B(Y_{\lfloor s \rfloor_{T/N} } ) \,dW_s
,
\end{split}
\end{equation}
and
let $\bar Y \colon [0, T]\times \Omega \to H_\gamma$
be an $(\mathcal{F}_t)_{t\in [0,T]}$-adapted stochastic process 
with continuous sample paths
such
that
for all $t\in [0, T]$ it holds  
$\P$-a.s.\,that
\begin{equation}
\begin{split}
\bar Y_t
= 
e^{tA} \xi 
&
+ 
\int_0^t e^{(t-s)A}
F( Y_{\lfloor s \rfloor_{T/N} })
\, ds
+
\int_0^t e^{(t-s)A} 
B(Y_{\lfloor s \rfloor_{T/N} } ) \, d W_s.
\end{split}
\end{equation}
\subsection{Analysis of the differences between 
nonlinearities-stopped exponential Euler approximations and their 
semilinear counterparts}
\begin{lemma}
\label{Lemma.Markov}
Assume the setting in Section \ref{Main.setting} and let 
$Z \in \mathcal{M} \big( \mathcal{F}, \mathcal{B}(H_\gamma) \big), \kappa \in [\nicefrac{\theta}{p},\infty).$
Then 
\begin{equation}
\Big \|
1-
\,
\1_
{
\{
\left \| F( Z ) \right\|_H
+
\left \| B( Z ) \right\|_{HS(U, H)}
\leq
\left (
\frac{N}{T}
\right )^\theta
\}
}
\,
\Big\|_{L^{p}(\P; \R)}
\leq
\left ( \tfrac{T}{N} \right ) ^\kappa 
\left (
\left \|F( Z ) \right \|_{L^{\nicefrac{\kappa p}{\theta}}(\P; H)}
+
\| B( Z ) \|_{L^{\nicefrac{\kappa p}{\theta}}(\P; \HS(U,H))}
\right )^{\frac{\kappa}{\theta}}
.
\end{equation}
\end{lemma}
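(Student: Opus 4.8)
The plan is to reduce the left-hand side to a probability and then apply Markov's inequality. First I would observe that the random variable
$
  1 - \1_{ \{ \| F(Z) \|_H + \| B(Z) \|_{\HS(U,H)} \leq (N/T)^\theta \} }
$
equals $\1_{ \{ \| F(Z) \|_H + \| B(Z) \|_{\HS(U,H)} > (N/T)^\theta \} }$, which is an indicator function and hence takes only the values $0$ and $1$. Therefore its $L^p(\P;\R)$-norm is simply
$
  \big( \P\big( \| F(Z) \|_H + \| B(Z) \|_{\HS(U,H)} > (N/T)^\theta \big) \big)^{1/p}
$.

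Next I would apply Markov's inequality to the nonnegative random variable $\| F(Z) \|_H + \| B(Z) \|_{\HS(U,H)}$ raised to the power $\kappa p / \theta$ (note that $\kappa p / \theta \in [1,\infty)$ since $\kappa \geq \theta/p$, so this power is legitimate and the resulting moment is the one appearing on the right-hand side). This gives
$
  \P\big( \| F(Z) \|_H + \| B(Z) \|_{\HS(U,H)} > (N/T)^\theta \big)
  \leq
  (T/N)^{\kappa p}
  \,
  \E\big[ ( \| F(Z) \|_H + \| B(Z) \|_{\HS(U,H)} )^{\kappa p / \theta} \big]
$.
Taking $p$-th roots on both sides and rewriting $\E[ \cdot ]^{1/p}$ in terms of the $L^{\kappa p/\theta}(\P;\R)$-norm of $\| F(Z) \|_H + \| B(Z) \|_{\HS(U,H)}$ (i.e., raising that norm to the power $\kappa/\theta$), followed by the triangle inequality in $L^{\kappa p/\theta}(\P;\R)$ to split the sum of norms, yields exactly the claimed bound.

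There is no real obstacle here; the only points requiring a moment of care are checking that the exponent $\kappa p/\theta$ is at least $1$ so that the application of Markov's inequality to the $(\kappa p/\theta)$-th power is valid and the $L^{\kappa p/\theta}$-norm is a genuine norm (needed for the triangle inequality), and bookkeeping of the exponents when passing between expectations and $L^q$-norms. Both are routine given $\kappa \in [\theta/p,\infty)$, $p \in [2,\infty)$, and $\theta \in (0,\nicefrac14]$.
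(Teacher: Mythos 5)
Your proposal is correct and follows essentially the same route as the paper's proof: rewrite the left-hand side as the $p$-th root of the probability of the complementary event, apply Markov's inequality to the $(\kappa p/\theta)$-th power, re-express the resulting moment as the $L^{\kappa p/\theta}$-norm raised to the power $\kappa/\theta$, and finish with the Minkowski inequality. Your explicit remark that $\kappa\geq\theta/p$ guarantees $\kappa p/\theta\geq 1$, so that the triangle inequality in $L^{\kappa p/\theta}$ is legitimate, is a point the paper leaves implicit.
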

\begin{proof}[Proof
of Lemma~\ref{Lemma.Markov}]
Markov's inequality shows that
\begin{equation}
\begin{split}
&
\big \|
1
-
\,
\1_
{
\left
\{
\left \| F( Z)\right\|_H
+
\left \| B( Z ) \right \|_{HS(U, H)}
\leq
\left (
\frac{N}{T}
\right )^\theta
\right
\}
}
\,
\big \|_{L^{p}(\P; H)}
=
\left |
\P
\!
\left[
\big(
\|F(Z) \|_H
+
 \|B(Z) \|_{HS(U, H)}
\big)^{\nicefrac{\kappa p}{\theta}}
>
\left ( \tfrac{N}{T} \right )^{ \kappa p }
\right]
\right |^{\frac{1}{p}}
\\
&
\leq
\left ( \tfrac{T}{N} \right ) ^{\kappa  }
\left |
\E
\Big[
\big(
\left \|F(Z)\right \|_H
+
\left \|B(Z) \right \|_{HS(U, H)}
\big)^{\frac{\kappa p}{\theta}}
\Big]
\right |^{\frac{1}{p}}
=
\left ( \tfrac{T}{N} \right ) ^{\kappa }
\left \|
\|F(Z)\|_H
+
\|B(Z)  \|_{HS(U, H)}
\right \|_{L^{\nicefrac{\kappa p}{\theta}}(\P; \R)}^{\frac{\kappa}{\theta}}
\\
&
\leq
\left ( \tfrac{T}{N} \right ) ^\kappa 
\left (
\|F(Z)  \|_{L^{\nicefrac{\kappa p}{\theta}}(\P; H)}
+
 \| B(Z) \|_{L^{\nicefrac{\kappa p}{\theta}}(\P; \HS(U,H))}
\right )^{\frac{\kappa}{\theta}}
.
\end{split}
\end{equation}
The proof of Lemma \ref{Lemma.Markov} is now completed.
\end{proof}
\begin{lemma}
\label{Lemma12}
Assume the setting in Section \ref{setting3} and let $\rho \in [\delta, 1-\delta),$
$t\in [0,T].$
Then
\begin{equation}
\label{enica}
\begin{split}
&
\| Y_t - \bar Y_t \|_{L^p(\P; H_\delta)}
\\
&
\leq
\tfrac{\max \{1, T^{\nicefrac{3}{2}} \}}{1-\delta-\rho}
N^{\nicefrac{(\delta - \rho)}{2}  }
\Big [
1
+
\!
\sup_{s\in [0,T]} \|F( Y_s )  \|_{L^{\nicefrac{ p}{\theta}}(\P; H)}
+
\tfrac{ 
\sqrt{ p ( p - 1 ) } }{ \sqrt{ 2 } }
\!
\sup_{s\in [0,T]} \| B( Y_s ) \|_{L^{\nicefrac{ p}{\theta}}(\P; \HS(U,H))}
\Big]^{1 + \frac{1}{ 2 \theta}}
\!\!\!\!
.
\end{split}
\end{equation}
\end{lemma}
\begin{proof}[Proof of Lemma~\ref{Lemma12}]
Note that
\begin{equation}
\begin{split}
\label{first_estimate}
&
\| Y_t - \bar Y_t  \|_{L^p(\P; H_\del)}
\leq
\int_0^t
\Big \| 
\big (e^{(t-\lfloor s \rfloor_{T/N}) A} - e^{(t-s)A} \big )
F(Y_{\lfloor s \rfloor_{T/N}})
\Big \|_{L^p(\P; H_\del)}
\,
ds
\\
&
+
\int_0^t 
\Big \| 
e^{(t-s)A}
\big (1 
-
\,
\1_{
\big \{
\| F(Y_{\lfloor s \rfloor_{T/N}}) \|_H + \| B(Y_{\lfloor s \rfloor_{T/N}} )\|_{\HS(U,H)}\leq \left (\frac{N}{T} \right)^\theta
\big \}
}
\,
\big)
F(Y_{\lfloor s \rfloor_{T/N}})
\,
\Big \|_{L^p(\P; H_\del)}
ds
\\
&
+
\left \| 
\int_0^t
\big ( e^{(t-\lfloor s \rfloor_{T/N}) A} - e^{(t-s)A} \big )
\,
\1_{
\big \{
\| F(Y_{\lfloor s \rfloor_{T/N}}) \|_H + \| B(Y_{\lfloor s \rfloor_{T/N}} )\|_{\HS(U,H)}\leq \left (\frac{N}{T} \right)^\theta
\big \}
}
\,
B( Y_{\lfloor s \rfloor_{T/N}} )
\,
dW_s
\right \|_{L^p(\P; H_\del)}
\\
&
+
\left \| 
\int_0^t
e^{(t-s)A}
\big (
1
-
\,
\1_{
\big \{
\| F(Y_{\lfloor s \rfloor_{T/N}}) \|_H + \| B(Y_{\lfloor s \rfloor_{T/N}} )\|_{\HS(U,H)}\leq \left (\frac{N}{T} \right)^\theta
\big \}
}
\,
\big)
B( Y_{\lfloor s \rfloor_{T/N}} )
\,
dW_s
\right \|_{L^p(\P; H_\del)}
.
\end{split}
\end{equation}
Moreover, observe that, e.g., \cite[Theorem $2.5.34$ and Lemma $2.5.35$]{Jentzen2014SPDElecturenotes} implies that
\begin{equation}
\label{long}
\begin{split}
&
\int_0^t
\Big \|
\big (
e^{( t - \lfloor s \rfloor_{T/N})A} 
-
e^{( t - s) A}
\big )
F( Y_{\lfloor s \rfloor_{T/N}} ) 
\Big \|_{L^{p}(\P;H_\del)}
\,
ds
\\
&
\leq
\int_0^t
\big \|
(-A)^{\rho+\nicefrac{\delta}{2} } 
e^{(t-s)A}
\big \|_{L(H)}
\big \|
(-A)^{ \nicefrac{\delta}{2}  -\rho}
\big(
\operatorname{Id}_H
-
e^{( s - \lfloor s \rfloor_{T/N})A} 
\big)
\big \|_{L(H)}
\|
F( Y_{\lfloor s \rfloor_{T/N}} ) 
\|_{L^{p}(\P;H)}
\,
ds
\\
&
\leq
\int_0^t
\tfrac{
( s-\lfloor s \rfloor_{T/N} )^{\rho - \nicefrac{\delta}{2} }
}
{
( t-s )^{\rho + \nicefrac{\delta}{2} }
}
 \| F( Y_{\lfloor s \rfloor_{T/N} })  \|_{L^{p}(\P; H)} 
\,
ds
\leq
\tfrac{T^{1-\delta}}{1-\rho- \nicefrac{\delta}{2} }
N^{-\rho+\nicefrac{\delta}{2} }
\sup\nolimits_{s \in [0,T]} \left \| F( Y_s ) \right \|_{L^p(\P;H)}
.
\end{split}
\end{equation}
In addition, note that
H\"older's inequality, e.g., Theorem 2.5.34 in \cite{Jentzen2014SPDElecturenotes},
and
Lemma \ref{Lemma.Markov} prove that
\begin{equation}
\label{zaspan2}
\begin{split}
&
\int_0^t 
\Big \| 
e^{(t-s)A}
\big (1 
-
\,
\1_{
\big \{
\| F(Y_{\lfloor s \rfloor_{T/N}}) \|_H + \| B(Y_{\lfloor s \rfloor_{T/N}} )\|_{\HS(U,H)}\leq \left (\frac{N}{T} \right)^\theta
\big \}
}
\,
\big)
F(Y_{\lfloor s \rfloor_{T/N}})
\,
\Big \|_{L^p(\P; H_\del)}
ds
\\
&
\leq
\int_0^t 
\big \| 
e^{(t-s)A}
F(Y_{\lfloor s \rfloor_{T/N}})
\,
\big \|_{L^{2p}(\P; H_\del)}
\left \| 
1 
-
\,
\1_{
\big \{
\| F(Y_{\lfloor s \rfloor_{T/N}}) \|_H + \| B(Y_{\lfloor s \rfloor_{T/N}} )\|_{\HS(U,H)}\leq \left (\frac{N}{T} \right)^\theta
\big \}
}
\right \|_{L^{2p}(\P; \R)}
ds
\\
&
\leq
\int_0^t 
(t-s)^{-\delta}
\big \| 
F(Y_{\lfloor s \rfloor_{T/N}})
\,
\big \|_{L^{2p}(\P; H)}
\left \| 
1 
-
\,
\1_{
\big \{
\| F(Y_{\lfloor s \rfloor_{T/N}}) \|_H + \| B(Y_{\lfloor s \rfloor_{T/N}} )\|_{\HS(U,H)}\leq \left (\frac{N}{T} \right)^\theta
\big \}
}
\right \|_{L^{2p}(\P; \R)}
ds
\\
&
\leq
\tfrac{
t^{1-\delta}
}
{1-\delta}
\sqrt{ \tfrac{T}{N} } 
\sup_{s\in [0,T]}
\| F(Y_s) \|_{L^{2p}(\P; H)}
\left [
\sup_{s\in [0,T]} \|F( Y_s )  \|_{L^{\nicefrac{ p}{\theta}}(\P; H)}
+
\sup_{s\in [0,T]} \| B( Y_s ) \|_{L^{\nicefrac{ p}{\theta}}(\P; \HS(U,H))}
\right ]^{\frac{1}{ 2 \theta}}
\\
&
\leq
\tfrac{
T^{\nicefrac{3}{2}-\delta}
}
{1-\delta}
N^{-\nicefrac{1}{2}}
\sup_{s\in [0,T]}
\| F(Y_s) \|_{L^{2p}(\P; H)}
\left [
\sup_{s\in [0,T]} \|F( Y_s )  \|_{L^{\nicefrac{ p}{\theta}}(\P; H)}
+
\sup_{s\in [0,T]} \| B( Y_s ) \|_{L^{\nicefrac{ p}{\theta}}(\P; \HS(U,H))}
\right ]^{\frac{1}{ 2 \theta}}
.
\end{split}
\end{equation}
Furthermore,
the Burkholder-Davis-Gundy type inequality in Lemma $7.7$ in Da Prato \& Zabczyk~\cite{dz92}, and, e.g., 
\cite[Theorem $2.5.34$ and Lemma $2.5.35$]{Jentzen2014SPDElecturenotes} show that
\begin{align}
\label{eq48}
\nonumber
&
\Big \| 
\int_0^t
\big ( e^{(t-\lfloor s \rfloor_{T/N}) A} - e^{(t-s)A} \big )
\,
\1_{
\big \{
\| F(Y_{\lfloor s \rfloor_{T/N}}) \|_H + \| B(Y_{\lfloor s \rfloor_{T/N}} )\|_{\HS(U,H)}\leq \left (\frac{N}{T} \right)^\theta
\big \}
}
\,
B( Y_{\lfloor s \rfloor_{T/N}} )
\,
dW_s
\Big \|_{L^p(\P; H_\del)}
\\
\nonumber
&
\leq
\sqrt{\tfrac{p(p-1)}{2}
\int_0^t
\big\|
\big (
e^{(t - \lfloor s \rfloor_{T/N} ) A} 
-
e^{(t-s)A}
\big )
B ( Y_{\lfloor s \rfloor_{T/N}} )
\big \|_{L^{p}(\P;\HS(U,H_\del ))}^2
\, ds
}
\\
\nonumber
&
\leq
\sqrt{\tfrac{p(p-1)}{2}
\!
\int_0^t
\!
\big \|
(-A)^{\frac{\rho + \delta}{2} } 
e^{(t-s)A}
\big \|_{L(H)}^2
\big \|
(-A)^{\frac{-\rho + \delta}{2} }
\big(
\operatorname{Id}_H
-
e^{( s - \lfloor s \rfloor_{T/N})A} 
\big)
\big \|_{L(H)}^2
\|
B( Y_{\lfloor s \rfloor_{T/N}} ) 
\|_{L^{p}(\P;\HS(U, H))}^2
\, 
ds
}
\\
&
\leq
\tfrac{ 
\sqrt{ p ( p - 1 ) } }{ \sqrt{ 2 } }
\sup\nolimits_{s\in [0,T]}
\| B(Y_s) \|_{L^p(\P; \HS(U, H ))}
\sqrt{
\int_0^t
\tfrac{
(  s-\lfloor s \rfloor_{T/N} )^{\rho - \delta}
}
{
( t-s )^{ \rho + \delta}
}
\,
ds
}
\\
\nonumber
&
\leq
\sqrt{\tfrac{p(p-1)}{2(1-\rho-\delta )}}
T^{\nicefrac{1}{2}-\delta}
N^{\frac{- \rho + \delta}{2} }
\sup\nolimits_{s\in [0,T]}
\| B(Y_s) \|_{L^p(\P; \HS(U, H))}
.
\end{align}
Moreover,
the Burkholder-Davis-Gundy type inequality in Lemma $7.7$ in Da Prato \& Zabczyk \cite{dz92},
H\"older's inequality, 
Lemma~\ref{Lemma.Markov}, and, e.g.,
Theorem~2.5.34 in \cite{Jentzen2014SPDElecturenotes} prove that
\begin{equation}
\begin{split}
\label{zaspan1}
&
\tfrac{2}{p(p-1)}
\left \| 
\int_0^t
e^{(t-s)A}
\big (
1
-
\,
\1_{
\big \{
\| F(Y_{\lfloor s \rfloor_{T/N}}) \|_H + \| B(Y_{\lfloor s \rfloor_{T/N}} )\|_{\HS(U,H)}\leq \left (\frac{N}{T} \right)^\theta
\big \}
}
\,
\big)
B( Y_{\lfloor s \rfloor_{T/N}} )
\,
dW_s
\right \|_{L^p(\P; H_\del)}^2
\\
&
\leq
\int_0^t
\Big \| 
e^{(t-s)A}
\big (
1
-
\,
\1_{
\big \{
\| F(Y_{\lfloor s \rfloor_{T/N}}) \|_H + \| B(Y_{\lfloor s \rfloor_{T/N}} )\|_{\HS(U,H)}\leq \left (\frac{N}{T} \right)^\theta
\big \}
}
\,
\big)
B( Y_{\lfloor s \rfloor_{T/N}} )
\Big \|_{L^p(\P; \HS(U, H_\del) )}^2
\,
ds
%
\\
&
\leq
\int_0^t
\big \| 
e^{(t-s)A}
B( Y_{\lfloor s \rfloor_{T/N}} )
\big \|_{L^{2p}(\P; \HS(U, H_\del) )}^2
\big\|
1
-
\,
\1_{
\big \{
\| F(Y_{\lfloor s \rfloor_{T/N}}) \|_H + \| B(Y_{\lfloor s \rfloor_{T/N}} )\|_{\HS(U,H)}\leq \left (\frac{N}{T} \right)^\theta
\big \}
}
\,
\big \|_{L^{2p}(\P; \R )}^2
\,
ds
%
\\
&
\leq
\int_0^t
(t-s)^{-2\delta}
\|
B( Y_{\lfloor s \rfloor_{T/N}} )
\|_{L^{2p}(\P; \HS(U, H) )}^2
\big\|
1
-
\,
\1_{
\big \{
\| F(Y_{\lfloor s \rfloor_{T/N}}) \|_H + \| B(Y_{\lfloor s \rfloor_{T/N}} )\|_{\HS(U,H)}\leq \left (\frac{N}{T} \right)^\theta
\big \}
}
\,
\big \|_{L^{2p}(\P; \R )}^2
\,
ds
%
\\
&
\leq
\tfrac{
T
t^{1-2\delta}
}
{(1-2\delta) N }
\sup_{s\in [0,T]}
\| B(Y_s) \|_{L^{2p}(\P; \HS(U, H) )}^2
\left [
\sup_{s\in [0,T]} \|F( Y_s )  \|_{L^{\nicefrac{ p}{\theta}}(\P; H)}
+
\sup_{s\in [0,T]} \| B( Y_s ) \|_{L^{\nicefrac{ p}{\theta}}(\P; \HS(U,H))}
\right ]^{\frac{1}{ \theta}}
\\
&
\leq
\tfrac{
T^{2(1-\delta)}
}
{N(1-2\delta)}
\sup_{s\in [0,T]}
\| B(Y_s) \|_{L^{2p}(\P; \HS(U, H) )}^2
\left [
\sup_{s\in [0,T]} \|F( Y_s )  \|_{L^{\nicefrac{ p}{\theta}}(\P; H)}
+
\sup_{s\in [0,T]} \| B( Y_s ) \|_{L^{\nicefrac{ p}{\theta}}(\P; \HS(U,H))}
\right ]^{\frac{1}{ \theta}}
.
\end{split}
\end{equation}
Combining \eqref{first_estimate}--\eqref{zaspan1} 
shows that
\begin{equation}
\begin{split}
&
\| Y_t - \bar Y_t \|_{L^p(\P; H_\delta)}
\leq
\tfrac{1}{\sqrt{N}}
\left [
\sup\nolimits_{s\in [0,T]} \|F( Y_s )  \|_{L^{\nicefrac{ p}{\theta}}(\P; H)}
+
\sup\nolimits_{s\in [0,T]} \| B( Y_s ) \|_{L^{\nicefrac{ p}{\theta}}(\P; \HS(U,H))}
\right ]^{\frac{1}{ 2 \theta}}
\\
&
\cdot
\Big[
\tfrac{
T^{\nicefrac{3}{2}-\delta}
}
{1-\delta}
\sup\nolimits_{s\in [0,T]}
\| F(Y_s) \|_{L^{2p}(\P; H)}
+
\sqrt{\tfrac{p(p-1)}{2(1-2\delta)}}
T^{1-\delta}
\sup\nolimits_{s\in [0,T]}
\| B(Y_s) \|_{L^{2p}(\P; \HS(U, H) )}
\Big]
\\
&
+
\tfrac{T^{1-\delta} }{1-\rho - \frac{\delta}{2} }N^{\frac{\delta-2\rho}{2} } 
\!
\sup\nolimits_{s \in [0,T]} \left \| F( Y_s ) \right \|_{L^p(\P;H)}
+
\sqrt{\tfrac{p(p-1)}{2(1-\rho-\delta )}}
T^{\frac{1}{2}-\delta}
N^{\frac{\delta - \rho}{2} }
\sup\nolimits_{s\in [0,T]}
\| B(Y_s) \|_{L^p(\P; \HS(U,H))}
.
\end{split}
\end{equation}
This completes the proof of Lemma \ref{Lemma12}.
\end{proof}

\subsection{Analysis of the differences 
between semilinear integrated nonlinearities-stopped exponential Euler 
approximations and solution processes of stochastic evolution equations}
\begin{lemma}
\label{Lemma13}
Assume the setting in Section \ref{setting3}, let $\rho \in [\delta, 1-\delta),$
$\eta\in [\delta,\tfrac{1}{2}),$
$\varepsilon\in (0,\infty),$
assume that $\sup_{h\in \H} | \lambda_h | <\infty,$
and
assume that for all 
$x,y\in H_\gamma$ 
it holds that
$
\max \{
\| F(x)- F(y) \|_H 
,
\| B(x)  - B(y) \|_{\HS(U, H)}
\}
\leq 
\d \| x - y \|_{H_\del} ( 1 + \| x \|_{H_\gamma}^\c + \| y \|_{H_\gamma}^\c )
$
and
$
\langle x-y, Ax + F(x)- Ay - F(y)\rangle_H + \tfrac{(p-1)(1+\varepsilon)}{2} \| B(x) - B(y) \|_{\HS(U,H)}^2 
\leq C \| x-y\|_H^2.
$
Then
\begin{equation}
\begin{split}
&
\sup\nolimits_{t\in [0,T]}
\| X_t - \bar Y_t \|_{L^p(\P;H)}
\leq
N^{ \delta - \min\{ \eta, \frac{ \rho + \delta }{ 2 } \} }
\tfrac{\max\{1, T^2\}
}
{(1-\delta-\rho)}
\big (C^2(1+\nicefrac{1}{\varepsilon}) p\big)^{\nicefrac{1}{p}}\exp\!\left(\tfrac{T C^2 p ( 1 + 1 / \varepsilon )}{2} \right)
\\
&
\cdot
\bigg(
2
\left [
1+
\!
\sup\nolimits_{s\in [0,T]} \|F( Y_s )  \|_{L^{\nicefrac{ 2p}{\theta}}(\P; H)}
+
\sqrt{p(2p-1)}
\sup\nolimits_{s\in [0,T]} \| B( Y_s ) \|_{L^{\nicefrac{ 2p}{\theta}}(\P; \HS(U,H))}
\right ]^{1+\frac{1}{ 2 \theta}}
\\
&
\quad
+
\sup\nolimits_{s\in [0,T]}
\| Y_s\|_{L^{2p}(\P; H_\eta)}
\bigg)
\left(
1 
+  
\sup\nolimits_{s\in [0,T]} \|\bar Y_s\|_{L^{2pc}(\P; H_\gamma)}^c 
+ 
\sup\nolimits_{s\in [0,T]}
\| Y_s\|_{L^{2pc}(\P; H_\gamma)}^c
\right)
.
\end{split}
\end{equation}
\end{lemma}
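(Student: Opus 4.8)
The plan is to invoke the abstract perturbation estimate in Theorem~2.10 in Hutzenthaler \& Jentzen~\cite{HutzenthalerJentzen2014PerturbationArxiv} with the exact solution process $X$ playing the role of the ``true'' mild solution and $\bar Y$ playing the role of the ``perturbed'' mild solution. Recall that $\bar Y$ satisfies the same type of mild integral equation as $X$, except that the nonlinearities $F$ and $B$ are evaluated at the time-discretized process $Y_{\lfloor s\rfloor_{T/N}}$ rather than at $\bar Y_s$. Hence the relevant perturbation is measured by the quantities
$
  F(\bar Y_s) - F(Y_{\lfloor s\rfloor_{T/N}})
$
and
$
  B(\bar Y_s) - B(Y_{\lfloor s\rfloor_{T/N}})
$,
and the two hypotheses of the lemma --- the local Lipschitz estimate in $H_\delta$ with the polynomial weight $(1+\|\cdot\|_{H_\gamma}^c+\|\cdot\|_{H_\gamma}^c)$ and the one-sided/monotonicity estimate with the $\tfrac{(p-1)(1+\varepsilon)}{2}$-factor in front of $\|B(x)-B(y)\|_{\HS(U,H)}^2$ --- are precisely the structural assumptions needed to feed into that perturbation theorem. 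The monotonicity estimate supplies the coercivity constant $C$ which controls the Gronwall-type exponential $\exp(\tfrac{TC^2p(1+1/\varepsilon)}{2})$, while the factor $(1+\varepsilon)$ is what is traded against $1/\varepsilon$ in the prefactor $(C^2(1+1/\varepsilon)p)^{1/p}$. Applying the perturbation theorem then bounds $\sup_{t\in[0,T]}\|X_t-\bar Y_t\|_{L^p(\P;H)}$ by a constant (of exactly the displayed shape) times
$
  \big\|\,\|\bar Y_s - Y_{\lfloor s\rfloor_{T/N}}\|_{H_\delta}\,(1+\|\bar Y_s\|_{H_\gamma}^c+\|Y_s\|_{H_\gamma}^c)\,\big\|
$
integrated (or supremized) over $s$.

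Next I would split $\bar Y_s - Y_{\lfloor s\rfloor_{T/N}}$ into $(\bar Y_s - Y_s) + (Y_s - Y_{\lfloor s\rfloor_{T/N}})$ and estimate the $H_\delta$-norm of each summand, pulling the polynomial weight out by H\"older's inequality (this is where the exponents jump from $p$ to $2p$ and $2pc$ as in the statement). For the first summand $Y_s - \bar Y_s$ I would directly invoke Lemma~\ref{Lemma12}, which already provides a bound in $L^p(\P;H_\delta)$ of order $N^{(\delta-\rho)/2}$ in terms of $\sup_{s}\|F(Y_s)\|_{L^{p/\theta}}$ and $\sup_{s}\|B(Y_s)\|_{L^{p/\theta}}$; raising the integrability to $2p$ is harmless as the same proof goes through verbatim with $p$ replaced by $2p$, which accounts for the $L^{2p/\theta}$ norms and the $\sqrt{p(2p-1)}$ factor appearing in the statement. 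For the second summand $Y_s - Y_{\lfloor s\rfloor_{T/N}}$, I would use the mild-form representation of $Y$ from Section~\ref{setting3}: the difference between $Y$ at time $s$ and at the grid point $\lfloor s\rfloor_{T/N}$ consists of a deterministic drift contribution, a stochastic integral contribution, and a ``semigroup jump'' contribution $e^{(s-\lfloor s\rfloor_{T/N})A}-\mathrm{Id}$ applied to the grid value; using the smoothing estimates $\|(-A)^{\eta-\delta}(e^{(s-\lfloor s\rfloor_{T/N})A}-\mathrm{Id})\|_{L(H)}\lesssim (s-\lfloor s\rfloor_{T/N})^{\eta-\delta}\lesssim N^{\delta-\eta}$ together with the a priori bound $\sup_s\|Y_s\|_{L^{2p}(\P;H_\eta)}$ (available from Section~\ref{section3}), this summand is controlled at rate $N^{\delta-\eta}$. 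The two rates combine into the single rate $N^{\delta-\min\{\eta,(\rho+\delta)/2\}}$ that appears in the statement.

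Finally I would absorb the polynomial weight $(1+\|\bar Y_s\|_{H_\gamma}^c+\|Y_s\|_{H_\gamma}^c)$ via H\"older's inequality against the $H_\delta$-estimates just obtained, producing the factor $(1+\sup_s\|\bar Y_s\|_{L^{2pc}(\P;H_\gamma)}^c+\sup_s\|Y_s\|_{L^{2pc}(\P;H_\gamma)}^c)$ in the statement; here one uses that $\delta\le\gamma$ so that the $H_\delta$-norm is dominated by the $H_\gamma$-norm where needed, and one uses the local Lipschitz hypothesis to convert $F,B$-differences into $H_\delta$-distances times this weight. Collecting the constants --- the $T$-powers $\max\{1,T^2\}$, the factor $1/(1-\delta-\rho)$, the exponential $\exp(\tfrac{TC^2p(1+1/\varepsilon)}{2})$, and the prefactor $(C^2(1+1/\varepsilon)p)^{1/p}$ coming out of the perturbation theorem --- yields exactly the claimed bound. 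I expect the main obstacle to be the careful bookkeeping of integrability exponents: the perturbation theorem outputs an $L^p$-error controlled by an $L^p$-norm of a product, and tracking how the polynomial weight forces the jump to $L^{2p}$ and $L^{2pc}$ norms (so that everything on the right-hand side is finite by the a priori bounds of Sections~\ref{section2}--\ref{section3} and the regularity of $\bar Y$) while keeping the rate in $N$ sharp requires a meticulous application of H\"older's inequality at each step. A secondary technical point is verifying that the hypotheses of the Hutzenthaler--Jentzen perturbation theorem --- in particular the measurability, adaptedness, and finiteness conditions --- are genuinely met by the processes $X$ and $\bar Y$ under the standing assumptions, which is routine but must be checked.
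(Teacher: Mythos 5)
Your proposal follows essentially the same route as the paper's proof: apply the perturbation estimate of Theorem~2.10 in Hutzenthaler \& Jentzen with $X$ and $\bar Y$, reduce the defect to $\|\bar Y_s - Y_{\lfloor s\rfloor_{T/N}}\|_{H_\delta}$ times the polynomial weight via the local Lipschitz and monotonicity hypotheses together with Young's and H\"older's inequalities, split $\bar Y_s - Y_{\lfloor s\rfloor_{T/N}}$ into $(\bar Y_s - Y_s)+(Y_s - Y_{\lfloor s\rfloor_{T/N}})$, handle the first piece by Lemma~\ref{Lemma12} with $p$ replaced by $2p$ and the second by the semigroup-jump/drift/stochastic-integral decomposition with the $H_\eta$ a priori bound. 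All the key ideas, the exponent bookkeeping ($L^{2p}$, $L^{2pc}$, $L^{2p/\theta}$), and the resulting rate $N^{\delta-\min\{\eta,(\rho+\delta)/2\}}$ match the paper's argument.
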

\begin{proof}[Proof of Lemma~\ref{Lemma13}]
Throughout this proof let $\chi\in [0,\infty)$ be the real number given by
$\chi=\frac{C(p-1)}{p} 
( 1 + \frac{C(p-2)(1+\nicefrac{1}{\varepsilon})}{2} ) .$
We intend to prove Lemma~\ref{Lemma13} through an application of Theorem~2.10 in 
Hutzenthaler \& Jentzen~\cite{HutzenthalerJentzen2014PerturbationArxiv}. To this end we now check the assumptions of Theorem 2.10 in 
Hutzenthaler \& Jentzen~\cite{HutzenthalerJentzen2014PerturbationArxiv}.
Let $\tilde X\colon [0,T]\times \Omega \to H_\gamma$ be the stochastic process which satisfies
that for all $s\in [0,T]$ it holds that
$\tilde X_s =e^{-s A} X_s.$
It\^o's formula then proves that for all $s\in [0,T]$ it holds $\P$-a.s.\,that
$
X_s= e^{sA} \tilde X_s=
\xi + \int_0^s [ A X_u + F(X_u) ] \, du + \int_0^s B(X_u)\, dW_u
.
$
Similar we see that for all $s\in [0,T]$ it holds $\P$-a.s.\,that
$
\bar Y_s= \xi + \int_0^s [ A \bar Y_u + F(Y_{\lfloor u \rfloor_{T/N}}) ] \, du + \int_0^s B( Y_{\lfloor u \rfloor_{T/N}})\, dW_u
.
$
Next observe that for all $x\in H_\gamma$ it holds that
$\|F(x)\|_H \leq \| F(0)\|_H + C \| x \|_{H_\delta} ( 1 + \| x \|_{H_\gamma}^c)$
and
$\|B(x)\|_{\HS(U,H)} \leq \| B(0)\|_{\HS(U,H)} + C \| x \|_{H_\delta}  ( 1 + \| x \|_{\HS(U, H_\gamma)}^{ c}).$
Combining this with the continuity of $X$ and
$\bar{Y}$ implies that 
$ \int_0^T 
\| A \bar Y_s \|_H 
+ \| F(Y_{\lfloor s \rfloor_{T/N}})\|_H
+ \| B(Y_{\lfloor s \rfloor_{T/N}} ) \|_{\HS(U, H)}^2
+ \| F(\bar Y_s )\|_H
+ \| B(\bar Y_s ) \|_{\HS(U, H)}^2
+ \| A X_s \|_H
+ \|F(X_s)\|_H 
+ \| B(X_s)\|_{\HS(U,H)}^2 
\, ds 
< \infty.$
We can thus apply
Theorem 2.10 in 
Hutzenthaler \& Jentzen~\cite{HutzenthalerJentzen2014PerturbationArxiv} 
to obtain that
\begin{multline}
\sup\nolimits_{t\in[0,T]}
\| X_t -  \bar Y_t   \|_{L^p(\P; H)}
\leq
e^{(C + \chi)T}
\Big\|
p
\| X - \bar Y  \|_H^{p-2}
\big [ 
\langle X - \bar Y ,  F( \bar Y  )  
-
F( Y_{\lfloor \cdot \rfloor_{T/N}} ) \rangle_H 
\\
+ 
\tfrac{(p-1)(1+\nicefrac{1}{\varepsilon})}{2}
\|
B( Y_{\lfloor \cdot \rfloor_{T/N}} )
-
B( \bar Y)
\|_{\HS(U,H)}^2
-
\chi
 \| X - \bar Y  \|_H^2
\big ]^+
\Big \|_{L^1(\mu_{[0,T]}\otimes \P ; \R)}^{\nicefrac{1}{p}}
.
\end{multline}
The Cauchy-Schwarz 
inequality therefore implies that
\begin{multline}
\label{tres}
\sup\nolimits_{t\in[0,T]}
\| X_t -  \bar Y_t  \|_{L^p(\P; H)}
\leq
e^{(C+ \chi)T}
\Big\|
p
\| X - \bar Y  \|_H^{p-2}
\big [ 
\| X - \bar Y \|_H 
\| F( \bar Y  )  
-
F( Y_{\lfloor \cdot \rfloor_{T/N}} ) \|_H 
\\
+ 
\tfrac{(p-1)(1+\nicefrac{1}{\varepsilon})}{2}
\|
B( Y_{\lfloor \cdot \rfloor_{T/N}} )
-
B( \bar Y)
\|_{\HS(U,H)}^2
-
\chi
\| X - \bar Y  \|_H^2
\big ]^+
\Big \|_{L^1(\mu_{[0,T]}\otimes \P ; \R)}^{\nicefrac{1}{p}}
.
\end{multline}
Next note that the assumption that
$\forall \, x,y\in H_\gamma \colon
\| F(x)-F(y)\|_H\leq \d \| x -y\|_{H_\delta} (1+\|x \|_{H_\gamma}^\c +\| y\|_{H_\gamma}^\c)$
and Young's inequality imply that for all $s\in[0,T]$ it holds that
\begin{equation}
\label{uno}
\begin{split}
&
\| X_s - \bar Y_s \|_H^{p-1}
\| F( \bar Y_s )- F( Y_{\lfloor s \rfloor_{T/N}} ) \|_H
\\
&
\leq
\tfrac{C(p-1)}{p} \| X_s - \bar Y_s \|_H^p
+
\tfrac{C}{p}
\| \bar Y_s - Y_{\lfloor s \rfloor_{T/N}} \|_{H_\del}^p
\big ( 1 +  \|\bar Y_s  \|_{H_\gamma}^\c + \| Y_{\lfloor s \rfloor_{T/N}}\|_{H_\gamma}^\c \big)^p
.
\end{split}
\end{equation}
Moreover, note that the assumption that
$\forall \, x,y\in H_\gamma \colon
\| B(x)-B(y)\|_{\HS(U,H)}\leq \d \| x -y\|_{H_\delta} (1+\|x \|_{H_\gamma}^\c +\| y\|_{H_\gamma}^\c)$
and again Young's inequality imply that for all $s\in[0,T]$ it holds that
\begin{equation}
\label{dos}
\begin{split}
&
\| X_s - \bar Y_s \|_H^{p-2}
\| B( Y_{\lfloor s \rfloor_{T/N}}) - B(\bar Y_s)\|_{\HS(U,H)}^2
\\
&
\leq
\tfrac{C^2(p-2)}{p}
\| X_s - \bar Y_s \|_H^p
+
\tfrac{2C^2}{p} 
\| Y_{\lfloor s \rfloor_{T/N}} - \bar Y_s \|_{H_{ \del}}^p
\big ( 1 +  \|\bar Y_s  \|_{H_{\gamma} }^{\c} + \| Y_{\lfloor s \rfloor_{T/N}}\|_{H_{ \gamma}}^{\c} \big)^p
.
\end{split}
\end{equation}
Combining \eqref{tres}--\eqref{dos} with H\"older's inequality shows that
\begin{equation}
\label{negative}
\begin{split}
&
\sup\nolimits_{t\in[0,T]}
\| X_t -  \bar Y_t  \|_{L^p(\P; H)}
\leq
e^{(C + \chi)T}
\Big\|
C
\| \bar Y - Y_{\lfloor \cdot \rfloor_{T/N}} \|_{H_{ \del}}^p
\big ( 1 +  \|\bar Y  \|_{H_\gamma}^\c + \| Y_{\lfloor \cdot \rfloor_{T/N}}\|_{H_\gamma}^\c \big)^p
\\
&
\quad
+
C^2 (p-1)(1+\nicefrac{1}{\varepsilon})
\| Y_{\lfloor \cdot \rfloor_{T/N}} - \bar Y \|_{H_{ \del}}^p
\big ( 1 +  \|\bar Y  \|_{H_{ \gamma}}^{\bar \c} + \| Y_{\lfloor \cdot \rfloor_{T/N}}\|_{H_{\gamma} }^{\c} \big)^p
\Big \|_{L^1(\mu_{[0,T]}\otimes \P ; \R)}^{\frac{1}{p}}
\\
&
=
e^{(C+\chi)T}
\big(
C + C^2(p-1)(1+\nicefrac{1}{\varepsilon})
\big)^{\nicefrac{1}{p}}
\Big\|
\| \bar Y - Y_{\lfloor \cdot \rfloor_{T/N}} \|_{H_{ \del}}
\big ( 1 +  \|\bar Y  \|_{H_\gamma}^\c + \| Y_{\lfloor \cdot \rfloor_{T/N}}\|_{H_\gamma}^\c \big)
\Big \|_{L^p(\mu_{[0,T]}\otimes \P ; \R)}
\\
&
\leq
e^{(C+\chi)T}
\big(
T
C^2 p (1+\nicefrac{1}{\varepsilon})
\big)^{\nicefrac{1}{p}}
\sup\nolimits_{s\in [0,t]}
\big\|
\bar Y_s - Y_{\lfloor s \rfloor_{T/N}} 
\big \|_{L^{2p}(\P ; H_\del)}
\\
&
\qquad
\cdot
\big(
1 
+
\sup\nolimits_{s\in [0,T]}  
\|\bar Y_s  \|_{L^{2pc}(\P ; H_\gamma)}^c
+ 
\sup\nolimits_{s\in [0,T]}
\| Y_{\lfloor s \rfloor_{T/N}} \|_{L^{2pc}(\P ; H_\gamma)}^c
\big)
.
\end{split}
\end{equation}
Next observe that
the triangle inequality implies that
\begin{equation}
\label{k11}
\begin{split}
&
\sup_{s\in [0,T]}
\|
\bar Y_s
-
Y_{\lfloor s \rfloor_{T/N}} 
\|_{L^{2p}(\P; H_\del)}
\leq
\sup_{s\in [0,T]}
\|
\bar Y_s
-
Y_s
\|_{L^{2p}(\P; H_\del)}
+
\sup_{s\in [0,T]}
\|
Y_s
-
Y_{\lfloor s \rfloor_{T/N}} 
\|_{L^{2p}(\P; H_\del)}
.
\end{split}
\end{equation}
In addition, observe that
the triangle inequality proves that for all $s\in [0,T]$ it holds that
\begin{equation}
\label{k0}
\begin{split}
&
\|
Y_s
-
Y_{\lfloor s \rfloor_{T/N}} 
\|_{L^{2p}(\P; H_\del)}
%
\leq
\big\|
( 
e^{(s-\lfloor s \rfloor_{T/N})A}
-
\operatorname{Id}_H ) Y_{\lfloor s \rfloor_{T/N}}
\big\|_{L^{2p}(\P; H_\del)}
\\
&
+
\bigg \|
\int_{\lfloor s \rfloor_{T/N}}^s
e^{(s - \lfloor u \rfloor_{T/N})A}
\,
\1_{
\big \{
\| F(Y_{\lfloor u \rfloor_{T/N}}) \|_H + \| B(Y_{\lfloor u \rfloor_{T/N}} )\|_{\HS(U,H)}\leq \left (\frac{N}{T} \right)^\theta
\big \}
}
\,
F(Y_{\lfloor u \rfloor_{T/N}})
\,
du
\bigg \|_{L^{2p}(\P; H_\del)}
\\
&
+
\bigg \|
\int_{\lfloor s \rfloor_{T/N}}^s
e^{( s - \lfloor u \rfloor_{T/N})A}
\,
\1_{
\big \{
\| F(Y_{\lfloor u \rfloor_{T/N}}) \|_H + \| B(Y_{\lfloor u \rfloor_{T/N}} )\|_{\HS(U,H)}\leq \left (\frac{N}{T} \right)^\theta
\big \}
}
\,
B(Y_{\lfloor u \rfloor_{T/N}})
\,
dW_u
\bigg \|_{L^{2p}(\P; 
H_\del)}
.
\end{split}
\end{equation}
Furthermore, observe that, e.g., 
\cite[Lemma 2.5.35]{Jentzen2014SPDElecturenotes} proves that for all $ s \in [0,T] $ it holds that
\begin{equation}
\begin{split}
\label{k1}
&
\big \|
( 
e^{(s-\lfloor s \rfloor_{T/N})A}
-
\operatorname{Id}_H ) Y_{\lfloor s \rfloor_{T/N}}
\big \|_{L^{2p}(\P; H_\del)}
\leq
\sup\nolimits_{u\in [0,T]}
\|
 Y_u \|_{L^{2p}(\P; H_\eta)}
 \left \|
e^{(s-\lfloor s \rfloor_{T/N})A}
-
\operatorname{Id}_H
 \right \|_{L( H_\eta, H_\del) }
 \\
 &
 \leq
\sup\nolimits_{u \in [0,T]}
\| Y_u \|_{L^{2p}(\P; H_\eta)}
\left ( s- \lfloor s \rfloor_{T/N} \right )^{\eta-\delta}
\leq
\sup\nolimits_{u \in [0,T]}
\| Y_u \|_{L^{2p}(\P; H_\eta)}
\left ( \tfrac{T}{N} \right )^{\eta-\delta}
.
\end{split}
\end{equation}
Moreover, note that, e.g., Theorem~2.5.34 in 
\cite{Jentzen2014SPDElecturenotes} proves that for all $s\in [0,T]$ it holds that
\begin{equation}
\begin{split}
\label{k2}
&
\bigg \|
\int_{\lfloor s \rfloor_{T/N}}^s
e^{( s - \lfloor u \rfloor_{T/N})A}
\,
\1_{
\big \{
\| F(Y_{\lfloor u \rfloor_{T/N}}) \|_H + \| B(Y_{\lfloor u \rfloor_{T/N}} )\|_{\HS(U,H)}\leq \left (\frac{N}{T} \right)^\theta
\big \}
}
\,
F(Y_{\lfloor u \rfloor_{T/N}})
\,
du
\bigg \|_{L^{2p}(\P; H_\del)}
\\
&
\leq
\int_{\lfloor s \rfloor_{T/N}}^s
\Big\|
e^{(s - \lfloor u \rfloor_{T/N})A}
F(Y_{\lfloor u \rfloor_{T/N}})
\Big \|_{L^{2p}(\P;H_\delta)}
\,
du
\\
&
\leq
\sup\nolimits_{u\in [0,T]}
\| F(Y_u)\|_{L^{2p}(\P; H)}
\int_{\lfloor s \rfloor_{T/N}}^s
(s - \lfloor u \rfloor_{T/N} )^{ - \delta}
\,
du
\leq
\sup\nolimits_{u\in [0,T]}
\| F(Y_u)\|_{L^{2p}(\P; H)}
\left( \tfrac{T}{N} \right)^{1-\delta}
.
\end{split}
\end{equation}
The Burkholder-Davis-Gundy type inequality in Lemma 7.7 
in Da Prato \& Zabczyk~\cite{dz92} and, e.g.,
Theorem~2.5.34 in \cite{Jentzen2014SPDElecturenotes}
prove that for all $ s \in [0,T] $ it holds that
\begin{equation}
\begin{split}
\label{k3}
&
\bigg \|
\int_{\lfloor s \rfloor_{T/N}}^s
e^{(s - \lfloor u \rfloor_{T/N})A}
\,
\1_{
\big \{
\| F(Y_{\lfloor u \rfloor_{T/N}}) \|_H + \| B(Y_{\lfloor u \rfloor_{T/N}} )\|_{\HS(U,H)}\leq \left (\frac{N}{T} \right)^\theta
\big \}
}
\,
B(Y_{\lfloor u \rfloor_{T/N}})
\,
dW_u
\bigg \|_{L^{2p}(\P; H_\del)}
\\
&
\leq
\sqrt{p(2p-1)
\int_{\lfloor s \rfloor_{T/N}}^s
\big \|
e^{(s - \lfloor u \rfloor_{T/N})A}
B(Y_{\lfloor u \rfloor_{T/N}})
\big \|_{L^{2p}(\P; \HS(U, H_\delta))}^2
du
}
\\
&
\leq
\sqrt{p(2p-1)}
\sup\nolimits_{u\in [0,T]}
\| B(Y_u)\|_{L^{2p}(\P; \HS(U, H) )}
\left( \tfrac{T}{N} \right)^{ \nicefrac{1}{2} -\delta}
.
\end{split}
\end{equation}
Combining \eqref{k0}--\eqref{k3} implies that
\begin{equation}
\label{M1}
\begin{split}
&
\sup\nolimits_{t\in [0,T]}
\| Y_t - Y_{\lfloor t \rfloor_{T/N}} \|_{L^{2p}( \P; H_\delta)}
\leq
\left( \tfrac{T}{N} \right)^{\nicefrac{1}{2}-\delta}
\sqrt{p(2p-1)}
\sup\nolimits_{u\in [0,T]}
\| B(Y_u)\|_{L^{2p}(\P; \HS(U, H) )}
\\
&
+
\left( \tfrac{T}{N} \right)^{\eta-\delta}
\sup\nolimits_{u\in [0,T]}
\| Y_u\|_{L^{2p}(\P; H_\eta)}
+
\left( \tfrac{T}{N} \right)^{1-\delta}
\sup\nolimits_{u\in [0,T]}
\| F(Y_u)\|_{L^{2p}(\P; H)}
.
\end{split}
\end{equation}
Furthermore, note that Lemma \ref{Lemma12} proves that
\begin{align}
\label{M2}
&
\sup\nolimits_{t\in [0,T]}
\| \bar Y_t - Y_t \|_{L^{2p}( \P; H_\delta)}
\\
\nonumber
&
\leq
\tfrac{\max \{1, T^{\nicefrac{3}{2}} \}}{1-\delta-\rho}
N^{\nicefrac{(\delta - \rho)}{2}  }
\Big [
1
+
\!
\sup_{u\in [0,T]} \|F( Y_u )  \|_{L^{\nicefrac{ 2 p}{\theta}}(\P; H)}
+
\sqrt{p(2p-1)}
\!
\sup_{u\in [0,T]} \| B( Y_u ) \|_{L^{\nicefrac{ 2 p}{\theta}}(\P; \HS(U,H))}
\Big]^{1 + \frac{1}{ 2 \theta}}
\!\!\!\!
.
\end{align}
Combining \eqref{k11}, \eqref{M1}, and \eqref{M2} shows that
\begin{align}
\label{one}
\nonumber
&
\sup\nolimits_{t\in [0,T]}
\| \bar Y_t - Y_{\lfloor t \rfloor_{T/N}} \|_{L^{2p}(\P; H_\delta) }
\leq
\left( \tfrac{T}{N} \right)^{\eta-\delta}
\sup\nolimits_{u\in [0,T]}
\| Y_u\|_{L^{2p}(\P; H_\eta)}
\\
&
+
\left( \tfrac{T}{N} \right)^{1-\delta}
\sup\nolimits_{u\in [0,T]}
\| F(Y_u)\|_{L^{2p}(\P; H)}
+
\left( \tfrac{T}{N} \right)^{\nicefrac{1}{2}-\delta}
\sqrt{p(2p-1)}
\sup\nolimits_{u\in [0,T]}
\| B(Y_u)\|_{L^{2p}(\P; \HS(U, H) )}
\\
\nonumber
&
+
\tfrac{\max \{1, T^{\nicefrac{3}{2}} \}N^{\nicefrac{(\delta - \rho)}{2} }}{1-\delta-\rho}
\Big [
1
+
\!
\sup_{u\in [0,T]} \|F( Y_u )  \|_{L^{\nicefrac{ 2 p}{\theta}}(\P; H)}
+
\sqrt{p(2p-1)}
\!
\sup_{u\in [0,T]} \| B( Y_u ) \|_{L^{\nicefrac{ 2 p}{\theta}}(\P; \HS(U,H))}
\Big]^{1 + \frac{1}{ 2 \theta}}
.
\end{align}
Combining \eqref{negative} and \eqref{one} implies that
\begin{equation}
\label{dvojica}
\begin{split}
&
\sup\nolimits_{t\in[0,T]}
\| X_t - \bar Y_t \|_{L^p(\P;H)}
\leq
e^{ \frac{T C^2 p ( 1 + 1 / \varepsilon )}{2}}
\big ( T C^2 p(1+\nicefrac{1}{\varepsilon})
\big)^{\nicefrac{1}{p}}
\\
&
\cdot
\bigg[
\left( \tfrac{T}{N} \right)^{\nicefrac{1}{2}-\delta}
\!\!
\sqrt{p(2p-1)}
\sup\nolimits_{u\in [0,T]}
\| B(Y_u)\|_{L^{2p}(\P; \HS(U, H) )}
+
\!
\left( \tfrac{T}{N} \right)^{\eta-\delta}
\!\!
\sup\nolimits_{u\in [0,T]}
\| Y_u\|_{L^{2p}(\P; H_\eta)}
\\
&
+
\tfrac{\max \{1, T^{\nicefrac{3}{2}} \}N^{  \nicefrac{(\delta -\rho)}{2} }}{1-\delta-\rho}
\Big [
1
+
\!
\sup_{u\in [0,T]} \|F( Y_u )  \|_{L^{\nicefrac{ 2 p}{\theta}}(\P; H)}
+
\sqrt{p(2p-1)}
\!
\sup_{u \in [0,T]} \| B( Y_u ) \|_{L^{\nicefrac{ 2 p}{\theta}}(\P; \HS(U,H))}
\Big]^{1 + \frac{1}{ 2 \theta}}
\\
&
+
\!
\left( \tfrac{T}{N} \right)^{1-\delta}
\sup\nolimits_{u\in [0,T]}
\| F(Y_u)\|_{L^{2p}(\P; H)}
\bigg]
\!
\left[
1 
+  
\sup\nolimits_{u\in [0,T]} \|\bar Y_u\|_{L^{2pc}(\P; H_\gamma)}^c 
+ 
\sup\nolimits_{u\in [0,T]}
\| Y_u\|_{L^{2pc}(\P; H_\gamma)}^c
\right]
.
\end{split}
\end{equation}
The proof of Lemma \ref{Lemma13} is thus completed.
\end{proof}
\subsection{Analysis of the differences between nonlinearities-stopped exponential Euler approximations and solution processes of stochastic evolution equations}
\begin{corollary}
\label{corollary.combined}
Assume the setting in Section \ref{setting3}, let 
$\eta\in [\delta,\tfrac{1}{2}),$
$\varepsilon\in (0,\infty),$
assume that $\sup_{h\in \H} | \lambda_h |\!<\infty,$
and
assume that for all 
$x,y\in H_\gamma$ 
it holds that
$
\max \{
\| F(x)- F(y) \|_H 
,
\| B(x)  - B(y) \|_{\HS(U, H)}
\}
\leq 
\d \| x - y \|_{H_\del} ( 1 + \| x \|_{H_\gamma}^\c + \| y \|_{H_\gamma}^\c )
$
and
$
\langle x-y, Ax + F(x)- Ay - F(y)\rangle_H + \tfrac{(p-1)(1+\varepsilon)}{2} \| B(x) - B(y) \|_{\HS(U,H)}^2 
\leq C \| x-y\|_H^2.
$
Then
\begin{equation}
\begin{split}
&
\sup\nolimits_{t\in [0,T]}
\| X_t - Y_t \|_{L^p(\P;H)}
\\
&
\leq
N^{ \delta - \eta}
\tfrac{
\max\{1, T^2\}
}
{(1-2\eta)}
\big (C^2(1+\nicefrac{1}{\varepsilon}) p\big)^{\nicefrac{1}{p}}
\exp\!\left(\tfrac{T C^2 p ( 1 + 1 / \varepsilon )}{2} \right)
\bigg(
3
\Big[
1+
\!
\sup\nolimits_{s\in [0,T]} \|F( Y_s )  \|_{L^{\nicefrac{ 2p}{\theta}}(\P; H)}
\\
&
+
\sqrt{p(2p-1)}
\sup\nolimits_{s\in [0,T]} \| B( Y_s ) \|_{L^{\nicefrac{ 2p}{\theta}}(\P; \HS(U,H))}
\Big ]^{1+\frac{1}{ 2 \theta}}
+
\sup\nolimits_{s\in [0,T]}
\| Y_s\|_{L^{2p}(\P; H_\eta)}
\bigg)
\\
&
\cdot
\left(
1 
+  
2
\left[
\left \| \xi \right \|_{L^{2pc}(\P; H_\gamma)}
+
C
\Big[
\tfrac{ T^{1-(\gamma+\alpha)}}{1-(\gamma+\alpha)} 
+
\sqrt{\tfrac{pc(2pc-1)}{(1-2(\gamma + \beta))}}
T^{\nicefrac{1}{2}-(\gamma + \beta)}
\Big]
\big[1 + \sup\nolimits_{s\in [0,T]} \| Y_s \|_{L^{2pca}(\P; H)}^{a} \big]
\right]^c
\right)
.
\end{split}
\end{equation}
\end{corollary}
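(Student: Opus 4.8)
The plan is to control the error $\sup_{t\in[0,T]}\|X_t-Y_t\|_{L^p(\P;H)}$ by inserting, via the triangle inequality, the semilinear integrated process $\bar Y$ of Section~\ref{setting3} between $X$ and $Y$ and then invoking the two temporal error estimates already at hand, namely Lemma~\ref{Lemma13} for $\sup_{t\in[0,T]}\|X_t-\bar Y_t\|_{L^p(\P;H)}$ and Lemma~\ref{Lemma12} for $\sup_{t\in[0,T]}\|Y_t-\bar Y_t\|$. The only parameter left free in those lemmas is $\rho\in[\delta,1-\delta)$, and the correct choice here is $\rho=2\eta-\delta$: the hypothesis $\eta\in[\delta,\nicefrac{1}{2})$ guarantees $\rho\in[\delta,1-\delta)$, and this choice makes $\min\{\eta,\tfrac{\rho+\delta}{2}\}=\eta$, $1-\delta-\rho=1-2\eta$ and $\tfrac{\delta-\rho}{2}=\delta-\eta$, which are precisely the exponent and the constant appearing in the asserted bound.

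With this $\rho$, Lemma~\ref{Lemma13} applied to $X$, $Y$, $\bar Y$ (with the given $\eta$, $\varepsilon$) bounds $\sup_{t\in[0,T]}\|X_t-\bar Y_t\|_{L^p(\P;H)}$ by $N^{\delta-\eta}\tfrac{\max\{1,T^2\}}{1-2\eta}(C^2(1+\nicefrac{1}{\varepsilon})p)^{\nicefrac{1}{p}}\exp(\tfrac{TC^2p(1+1/\varepsilon)}{2})$ times the product of $2[1+\sup_{s\in[0,T]}\|F(Y_s)\|_{L^{2p/\theta}(\P;H)}+\sqrt{p(2p-1)}\sup_{s\in[0,T]}\|B(Y_s)\|_{L^{2p/\theta}(\P;\HS(U,H))}]^{1+\frac{1}{2\theta}}+\sup_{s\in[0,T]}\|Y_s\|_{L^{2p}(\P;H_\eta)}$ with $(1+\sup_{s}\|\bar Y_s\|_{L^{2pc}(\P;H_\gamma)}^c+\sup_{s}\|Y_s\|_{L^{2pc}(\P;H_\gamma)}^c)$. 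For the remaining summand I would pass to the stronger norm and the higher moment, $\|Y_t-\bar Y_t\|_{L^p(\P;H)}\le\|Y_t-\bar Y_t\|_{L^{2p}(\P;H_\delta)}$, and apply Lemma~\ref{Lemma12} with $p$ replaced by $2p$ and the same $\rho$; since $\sqrt{2p(2p-1)}/\sqrt{2}=\sqrt{p(2p-1)}$ this yields a bound of the shape $\tfrac{\max\{1,T^{3/2}\}}{1-2\eta}N^{\delta-\eta}[1+\sup_{s}\|F(Y_s)\|_{L^{2p/\theta}(\P;H)}+\sqrt{p(2p-1)}\sup_{s}\|B(Y_s)\|_{L^{2p/\theta}(\P;\HS(U,H))}]^{1+\frac{1}{2\theta}}$, whose bracket coincides with the one produced by Lemma~\ref{Lemma13}. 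Because this bracket raised to the power $1+\frac{1}{2\theta}$ and the factor $(1+\sup_{s}\|\bar Y_s\|_{H_\gamma}^c+\sup_{s}\|Y_s\|_{H_\gamma}^c)$ are both at least $1$, adding the two contributions merely replaces the coefficient $2$ in front of the bracket by $3$ while the common prefactor $N^{\delta-\eta}\tfrac{\max\{1,T^2\}}{1-2\eta}(C^2(1+\nicefrac{1}{\varepsilon})p)^{\nicefrac{1}{p}}\exp(\cdots)$ is retained; one also checks that every negative power of $N$ arising on the way ($N^{-\nicefrac{1}{2}}$ from Lemma~\ref{Lemma12}, and $(T/N)^{1-\delta}$, $(T/N)^{\nicefrac{1}{2}-\delta}$, $(T/N)^{\eta-\delta}$ inside Lemma~\ref{Lemma13}) is, for $N\in\N$, dominated by a constant multiple of $N^{\delta-\eta}$, which follows from $0\le\delta\le\eta<\nicefrac{1}{2}$.

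It then remains to replace $\sup_{s}\|\bar Y_s\|_{L^{2pc}(\P;H_\gamma)}$ and $\sup_{s}\|Y_s\|_{L^{2pc}(\P;H_\gamma)}$ in the last factor by the explicit expression in the statement. For this I would apply Lemma~\ref{bootstrap} twice: once to $\bar Y$, i.e.\ with $p$ replaced by $2pc$, with $Z=Y_{\lfloor\cdot\rfloor_{T/N}}$, with the time-discretization function of Section~\ref{setting2} taken to be the identity, and with growth exponent $a$; and once to $Y$ in the same way but with that function equal to $\lfloor\cdot\rfloor_{T/N}$ and with $F$, $B$ replaced by their nonlinearities-stopped versions. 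Using $\gamma<\min\{1-\alpha,\nicefrac{1}{2}-\beta\}$ from Section~\ref{setting3} and the polynomial $H_{-\alpha}$-growth of $F$ and $H_{-\beta}$-growth of $B$, together with $\sqrt{\tfrac{2pc(2pc-1)}{2(1-2(\gamma+\beta))}}=\sqrt{\tfrac{pc(2pc-1)}{1-2(\gamma+\beta)}}$ and $\sup_{s}\|Y_{\lfloor s\rfloor_{T/N}}\|_{L^{2pca}(\P;H)}\le\sup_{s}\|Y_s\|_{L^{2pca}(\P;H)}$, this shows that both moments are at most $\|\xi\|_{L^{2pc}(\P;H_\gamma)}+C[\tfrac{T^{1-(\gamma+\alpha)}}{1-(\gamma+\alpha)}+\sqrt{\tfrac{pc(2pc-1)}{1-2(\gamma+\beta)}}T^{\nicefrac{1}{2}-(\gamma+\beta)}][1+\sup_{s\in[0,T]}\|Y_s\|_{L^{2pca}(\P;H)}^a]$, so that $1+\sup_{s}\|\bar Y_s\|_{H_\gamma}^c+\sup_{s}\|Y_s\|_{H_\gamma}^c\le1+2[\,\cdots\,]^c$ with $[\,\cdots\,]$ the bracket in the corollary. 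Inserting this into the estimate from the previous paragraph completes the proof. There is no genuine obstacle here: the substance is already in Lemmas~\ref{Lemma12} and~\ref{Lemma13}, and the remaining work is bookkeeping --- reconciling the $L^p$- and $L^{2p}$-moments, verifying the domination of the various powers of $N$ by $N^{\delta-\eta}$, and ensuring that the hypotheses of Lemma~\ref{bootstrap} (in particular the polynomial growth of $F$ into $H_{-\alpha}$ and of $B$ into $H_{-\beta}$) are available.
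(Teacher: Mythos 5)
Your proposal is correct and follows essentially the same route as the paper: the triangle inequality through the semilinear integrated process $\bar Y$, Lemma~\ref{Lemma13} and Lemma~\ref{Lemma12} with the choice $\rho = 2\eta-\delta$ (so that $\min\{\eta,\tfrac{\rho+\delta}{2}\}=\eta$, $1-\delta-\rho=1-2\eta$, and $\tfrac{\delta-\rho}{2}=\delta-\eta$), absorption of the Lemma~\ref{Lemma12} contribution by raising the coefficient $2$ to $3$, and finally Lemma~\ref{bootstrap} to replace the $H_\gamma$-moments of $Y$ and $\bar Y$ by the explicit bracket. The extra bookkeeping you flag (domination of the various powers of $N$ by $N^{\delta-\eta}$, reconciling $L^p$ with $L^{2p}$) is handled the same way in the paper.
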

\begin{proof}[Proof of Corollary~\ref{corollary.combined}]
Note that
\begin{equation}
\label{moka}
\sup\nolimits_{t\in [0,T]}
\| X_t - Y_t\|_{L^p(\P; H)}
\leq
\sup\nolimits_{t\in [0,T]}
\left(
\| X_t - \bar Y_t \|_{L^p(\P;H)}
+
\| Y_t - \bar Y_t \|_{L^p(\P; H)}
\right)
.
\end{equation}
Combining Lemma \ref{Lemma12}, Lemma \ref{Lemma13} (with $\rho = 2 \eta - \delta$ in the notation of Lemma \ref{Lemma13}), and 
$\eqref{moka}$ proves that 
\begin{equation}
\label{once}
\begin{split}
&
\sup\nolimits_{t\in [0,T]}
\| X_t - Y_t \|_{L^p(\P;H)}
\leq
N^{ \delta - \eta}
\tfrac{
\max\{1, T^2\}
}
{(1-2\eta)}
\big (C^2(1+\nicefrac{1}{\varepsilon}) p\big)^{\nicefrac{1}{p}}
\exp\!\left(\tfrac{T C^2 p ( 1 + 1 / \varepsilon )}{2} \right)
\\
&
\cdot
\bigg(
3
\left [
1+
\!
\sup\nolimits_{s\in [0,T]} \|F( Y_s )  \|_{L^{\nicefrac{ 2p}{\theta}}(\P; H)}
+
\sqrt{p(2p-1)}
\sup\nolimits_{s\in [0,T]} \| B( Y_s ) \|_{L^{\nicefrac{ 2p}{\theta}}(\P; \HS(U,H))}
\right ]^{1+\frac{1}{ 2 \theta}}
\\
&
\quad
+
\sup\nolimits_{s\in [0,T]}
\| Y_s\|_{L^{2p}(\P; H_\eta)}
\bigg)
\left(
1 
+  
\sup\nolimits_{s\in [0,T]} \|\bar Y_s\|_{L^{2pc}(\P; H_\gamma)}^c 
+ 
\sup\nolimits_{s\in [0,T]}
\| Y_s\|_{L^{2pc}(\P; H_\gamma)}^c
\right)
.
\end{split}
\end{equation}
Moreover, Lemma \ref{bootstrap} proves that
\begin{equation}
\label{twice}
\begin{split}
&
\sup\nolimits_{s\in [0,T]} \|\bar Y_s\|_{L^{2pc}(\P; H_\gamma)}^c 
+ 
\sup\nolimits_{s\in [0,T]}
\| Y_s\|_{L^{2pc}(\P; H_\gamma)}^c
\\
&
\leq
2
\left[
\left \| \xi \right \|_{L^{2pc}(\P; H_\gamma)}
+
C
\Big[
\tfrac{ T^{1-(\gamma+\alpha)}}{1-(\gamma+\alpha)} 
+
\sqrt{\tfrac{pc(2pc-1)}{(1-2(\gamma + \beta))}}
T^{\nicefrac{1}{2}-(\gamma + \beta)}
\Big]
\big[1 + \sup\nolimits_{s\in [0,T]} \| Y_s \|_{L^{2pca}(\P; H)}^{a} \big]
\right]^c
.
\end{split}
\end{equation}
Combining 
\eqref{once} and \eqref{twice} completes the proof
of Corollary~\ref{corollary.combined}.
\end{proof}
\section{Temporal regularity properties of solution processes of SPDEs}
\label{section.known}
In this section we present a few elementary and essentially well-known 
temporal regularity properties for solution processes of stochastic 
partial differential equations with globally Lipschitz continuous 
coefficients. In the literature similar results can, e.g., be found in
Van Neerven et al.~\cite[Theorem 6.3]{vvw08} and in the references mentioned in 
Van Neerven et al.~\cite{vvw08}.
\subsection{Setting}
\label{setting.existence}
Assume the setting in Section \ref{Main.setting},
let 
$ b \in\![0,\infty), \eta\in [0,1),$
$
F\in\mathcal{C} 
( H_\gamma, 
H_{\gamma-\eta}
),$
$
B\in\mathcal{C}
(H_{\gamma},
\HS(U, H_{\gamma-\nicefrac{\eta}{2}}))
,
$
assume that for all $x, y\in H_\gamma$ it holds that
$\max \{ \| F(x) - F(y) \|_{H_{\gamma-\eta}},
\| B(x) - B(y) \|_{\HS(U,H_{\gamma-\nicefrac{\eta}{2}})} \}
\leq b \|x-y\|_{H_\gamma},$
and
let $X\colon [0,T]\times \Omega \to H_\gamma$
be an $(\mathcal{F}_t)_{t\in [0,T]}$-predictable
stochastic process
such 
that
for all $t\in [0,T]$
it holds $\P$-a.s.\,that
$\int_0^t \|e^{(t-s)A} F(X_s)\|_H
+
\|e^{(t-s)A} B(X_s)\|_{\HS(U,H)}^2
\, ds<\infty$
and
\begin{equation}
X_t
=
\xi
+
\int_0^t
e^{(t-s)A}F(X_s)
\, ds
+ 
\int_0^t
e^{(t-s)A}  B(X_s)
\,
dW_s.
\end{equation}
\subsection{Temporal regularity properties}
\begin{lemma}
\label{holder.prvic}
Assume the setting in Section \ref{setting.existence}
and
let $\beta \in [0, \nicefrac{(1-\eta)}{2}) $.
Then 
\begin{equation}
\begin{split}
&
\sup\nolimits_{s\in [0,T), t\in (s,T]}
\frac{
\| ( X_t - e^{tA} \xi) - (X_s - e^{sA}  \xi)
\|_{L^p(\P; H_\gamma)}
}
{
(t-s)^\beta 
}
\\
&
\leq
\tfrac{\max\{1,T\} \sqrt{2p(p-1)}}{1-\eta-2\beta}
\left ( 
\!
\| F(0)\|_{H_{\gamma-\eta}}
+
\|B(0)\|_{\HS(U, H_{\gamma-\nicefrac{\eta}{2}})}
+
2
b
\sup\nolimits_{u\in [0,T]} 
\|X_u\|_{L^p(\P; H_\gamma)}
\!
\right )
.
\end{split}
\end{equation}
\end{lemma}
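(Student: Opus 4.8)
The plan is to estimate the increment $(X_t - e^{tA}\xi) - (X_s - e^{sA}\xi)$ directly from the mild formulation, separating the deterministic (drift) integral from the stochastic integral. Writing $X_t - e^{tA}\xi = \int_0^t e^{(t-u)A} F(X_u)\,du + \int_0^t e^{(t-u)A} B(X_u)\,dW_u$, the difference of the two such expressions at times $t>s$ splits into four pieces: the ``old'' part $\int_0^s (e^{(t-u)A} - e^{(s-u)A}) F(X_u)\,du$, the ``new'' part $\int_s^t e^{(t-u)A} F(X_u)\,du$, and the two analogous pieces for the stochastic integral. First I would bound $\|F(X_u)\|_{H_{\gamma-\eta}} \le \|F(0)\|_{H_{\gamma-\eta}} + b\,\|X_u\|_{H_\gamma}$ and $\|B(X_u)\|_{\HS(U,H_{\gamma-\eta/2})} \le \|B(0)\|_{\HS(U,H_{\gamma-\eta/2})} + b\,\|X_u\|_{H_\gamma}$ using the global Lipschitz assumption, so that every integrand is controlled by the constant $c_0 := \|F(0)\|_{H_{\gamma-\eta}} + \|B(0)\|_{\HS(U,H_{\gamma-\eta/2})} + 2b\sup_{u\in[0,T]}\|X_u\|_{L^p(\P;H_\gamma)}$ (up to splitting the two contributions appropriately).

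Next I would estimate each of the four pieces in $L^p(\P;H_\gamma)$. For the deterministic ``new'' part, $\big\|\int_s^t e^{(t-u)A}F(X_u)\,du\big\|_{L^p(\P;H_\gamma)} \le \int_s^t \|e^{(t-u)A}\|_{L(H_{\gamma-\eta},H_\gamma)}\|F(X_u)\|_{L^p(\P;H_{\gamma-\eta})}\,du \le \int_s^t (t-u)^{-\eta}\,du \cdot(\text{const}) = \frac{(t-s)^{1-\eta}}{1-\eta}\cdot(\text{const})$, and since $\beta < \tfrac{1-\eta}{2} \le 1-\eta$ one bounds $(t-s)^{1-\eta} \le \max\{1,T^{1-\eta-\beta}\}(t-s)^\beta$. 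For the ``old'' deterministic part I would insert a factor $(-A)^{\eta+\beta-?}$ and use the standard smoothing/Hölder-continuity estimate $\|(e^{(t-u)A}-e^{(s-u)A})\|_{L(H_{\gamma-\eta},H_\gamma)} = \|(e^{(t-s)A}-\operatorname{Id})e^{(s-u)A}\|_{L(H_{\gamma-\eta},H_\gamma)} \le (t-s)^{\beta}(s-u)^{-\eta-\beta}$ from Lemma~2.5.35 in \cite{Jentzen2014SPDElecturenotes}, so that $\int_0^s (s-u)^{-\eta-\beta}\,du = \frac{s^{1-\eta-\beta}}{1-\eta-\beta}$ is finite precisely because $\beta<\tfrac{1-\eta}{2}$ forces $\eta+\beta<1$. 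For the two stochastic pieces I would apply the Burkholder--Davis--Gundy type inequality in Lemma~7.7 in Da Prato \& Zabczyk~\cite{dz92}, picking up the factor $\sqrt{\tfrac{p(p-1)}{2}}$, and then use $\|e^{(t-u)A}\|_{L(H_{\gamma-\eta/2},H_\gamma)} \le (t-u)^{-\eta/2}$ and the Hölder-smoothing bound $\|(e^{(t-s)A}-\operatorname{Id})e^{(s-u)A}\|_{L(H_{\gamma-\eta/2},H_\gamma)} \le (t-s)^\beta (s-u)^{-\eta/2-\beta}$; the resulting time integrals are $\int_s^t (t-u)^{-\eta}\,du$ and $\int_0^s (s-u)^{-\eta-2\beta}\,du$, both finite and the exponent giving $(t-s)^{1-\eta}$ respectively the controlling power of $(s-u)$ being integrable exactly when $\eta+2\beta<1$, i.e.\ when $\beta<\tfrac{1-\eta}{2}$ — this is where the hypothesis on $\beta$ is used in its sharpest form.

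Finally I would collect the four bounds, divide by $(t-s)^\beta$, take the supremum over $0\le s<t\le T$, and absorb all the $T$-powers into $\max\{1,T\}$ and the denominators into $\tfrac{1}{1-\eta-2\beta}$ (noting $1-\eta-2\beta \le 1-\eta-\beta \le 1-\eta$, so $\tfrac{1}{1-\eta-2\beta}$ dominates the other two reciprocals), while the numerical constants $1$, $\tfrac12$, and the BDG factor $\sqrt{\tfrac{p(p-1)}{2}}$ combine into the stated $\sqrt{2p(p-1)}$ after also using $p\ge 2$ to absorb the deterministic terms (which carry no $p$-factor) into the $p$-dependent prefactor. The main obstacle is purely bookkeeping: matching the exponents of $(t-s)$ across the four terms so that $(t-s)^\beta$ is cleanly the smallest power present (requiring $\beta\le\min\{1-\eta,\ 1-\eta-\beta,\ 1-\tfrac{\eta}{2}-\beta\}$, all of which follow from $\beta<\tfrac{1-\eta}{2}$), and then verifying that the particular combination of constants telescopes into the claimed closed form; there is no conceptual difficulty beyond the correct application of the smoothing estimates and BDG.
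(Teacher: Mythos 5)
Your proposal is correct and follows essentially the same route as the paper: the same four-piece decomposition of $(X_t-e^{tA}\xi)-(X_s-e^{sA}\xi)$ into old/new drift and stochastic parts, the same smoothing estimates from \cite[Theorem~2.5.34 and Lemma~2.5.35]{Jentzen2014SPDElecturenotes}, the same Burkholder--Davis--Gundy inequality from \cite[Lemma~7.7]{dz92}, and the same final bookkeeping with $\max\{1,T\}$ and $\tfrac{1}{1-\eta-2\beta}$. The only (harmless) imprecision is that the new stochastic piece yields $(t-s)^{\nicefrac{(1-\eta)}{2}}$ after taking the square root, not $(t-s)^{1-\eta}$, but this exponent still dominates $\beta$ under the hypothesis $\beta<\nicefrac{(1-\eta)}{2}$.
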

\begin{proof}[Proof of Lemma~\ref{holder.prvic}]
First
observe that, e.g., \cite[Theorem~2.5.34 and Lemma~2.5.35]{Jentzen2014SPDElecturenotes} 
shows that for all $s\in [0,T], t\in [s,T]$
it holds that
\begin{equation}
\label{70}
\begin{split}
&
\left \| \int_0^s (e^{(t-u)A} - e^{(s-u)A}) F(X_u)\,du \right\|_{L^p(\P; H_\gamma)}
\leq
\int_0^s \left \| (e^{(t-u)A} - e^{(s-u)A}) F(X_u) \right\|_{L^p(\P; H_\gamma)} \,du
\\
&
\leq
\int_0^s
\| (-A)^{\eta + \beta} e^{(s-u)A}\|_{L(H)}
\| (-A)^{-\beta} (\operatorname{Id}_H - e^{(t-s)A} ) \|_{L(H)}
\|F(X_u)\|_{L^p(\P; H_{\gamma-\eta})} \, du
\\
&
\leq
\int_0^s
\tfrac{(t-s)^\beta}{(s-u)^{\eta+\beta}}
\left \| \|F(0)\|_{H_{\gamma-\eta}} + b \|X_u\|_{H_\gamma} \right\|_{L^p(\P; \R)}
\, du
\\
&
\leq
\left ( 
\|F(0)\|_{H_{\gamma-\eta}}
+
b
\sup\nolimits_{u\in [0,T]} 
\|X_u\|_{L^p(\P; H_\gamma)}
\right)
\tfrac{s^{1-\eta-\beta}}{1-\eta-\beta}
(t-s)^\beta
.
\end{split}
\end{equation}
Moreover, e.g., \cite[Theorem~2.5.34 and Lemma~2.5.35]{Jentzen2014SPDElecturenotes} combined with the Burkholder-Davis-Gundy type inequality 
in Lemma~7.7 in Da Prato \& Zabczyk~\cite{dz92}
proves that for all $ s \in [0,T] $, $ t \in [s,T] $
it holds that
\begin{equation}
\begin{split}
&
\left \| \int_0^s (e^{(t-u)A} - e^{(s-u)A}) B(X_u)\,du \right\|_{L^p(\P; H_\gamma)}
\\
&
\leq
\sqrt{
\tfrac{p(p-1)}{2}
\int_0^s \left \| (e^{(t-u)A} - e^{(s-u)A}) B(X_u) \right\|_{L^p(\P; \HS(U, H_\gamma))}^2 du
}
\\
&
\leq
\sqrt{
\tfrac{p(p-1)}{2}
\int_0^s
\| (-A)^{\nicefrac{\eta}{2} + \beta} e^{(s-u)A}\|_{L(H)}^2
\| (-A)^{-\beta} (\operatorname{Id}_H - e^{(t-s)A} ) \|_{L(H)}^2
\|B(X_u)\|_{L^p(\P; \HS(U, H_{\gamma-\nicefrac{\eta}{2}}))}^2 \, du
}
\\
&
\leq
\sqrt{
\tfrac{p(p-1)}{2}
\int_0^s
\tfrac{(t-s)^{2\beta}}{(s-u)^{\eta+2\beta}}
\left\| \|B(0)\|_{\HS(U, H_{\gamma-\nicefrac{\eta}{2}}) } + b \|X_u\|_{H_\gamma} \right\|_{L^p(\P; \R)}^2
\, du
}
\\
&
\leq
\left( 
\|B(0)\|_{\HS(U, H_{\gamma-\nicefrac{\eta}{2}}) }
+
b
\sup\nolimits_{u\in [0,T]} 
\|X_u\|_{L^p(\P; H_\gamma)}
\right)
\tfrac{\sqrt{p(p-1)}s^{\nicefrac{1}{2}-\nicefrac{\eta}{2}-\beta}}{\sqrt{2(1-\eta-2\beta)}}
(t-s)^\beta
.
\end{split}
\end{equation} 
Furthermore, note that, e.g., \cite[Theorem~2.5.34 and Lemma~2.5.35]{Jentzen2014SPDElecturenotes} implies 
that for all $ s \in [0,T] $, $ t \in [s,T] $
it holds that
\begin{equation}
\begin{split}
&
\left \| \int_s^t e^{(t-u)A}F(X_u) \, du \right \|_{L^p(\P; H_\gamma)}
\leq
\int_s^t \left \| e^{(t-u)A}F(X_u) \right \|_{L^p(\P; H_\gamma)} \, du 
\\
&
\leq
\int_s^t \| (-A)^{\eta} e^{(t-u)A} \|_{L(H)} \| F(X_u) \|_{L^p(\P; H_{\gamma-\eta})} \, du
\leq
\int_s^t 
\tfrac{1}{(t-u)^{\eta}}
( \| F(0)\|_{H_{\gamma-\eta}}\! + b \|X_u \|_{L^p(\P;H_\gamma)})
\, du
\\
&
\leq
\left( 
\|F(0)\|_{H_{\gamma-\eta}}
+
b
\sup\nolimits_{u\in [0,T]} 
\|X_u\|_{L^p(\P; H_\gamma)}
\right )
\tfrac{1}{1-\eta} (t-s)^{1-\eta}
.
\end{split}
\end{equation}
Again, e.g., \cite[Theorem~2.5.34 and Lemma~2.5.35]{Jentzen2014SPDElecturenotes} 
and the Burkholder-Davis-Gundy type inequality in Lemma~7.7 
in Da Prato \& Zabczyk~\cite{dz92}
show that for all $ s \in [0,T] $, $ t \in [s,T] $
it holds that
\begin{equation}
\begin{split}
\label{73}
&
\left \| \int_s^t e^{(t-u)A}B(X_u) \, du \right \|_{L^p(\P; H_\gamma)}
\leq
\sqrt{
\tfrac{p(p-1)}{2}
\int_s^t \left \| e^{(t-u)A}B(X_u) \right \|_{L^p(\P; \HS(U, H_\gamma) )}^2 du
} 
\\
&
\leq
\sqrt{
\tfrac{p(p-1)}{2}
\int_s^t \| (-A)^{\nicefrac{\eta}{2}} e^{(t-u)A} \|_{L(H)}^2 \| B(X_u) \|_{L^p(\P; \HS(U, H_{\gamma-\nicefrac{\eta}{2}}))}^2 \, du
}
\\
&
\leq
\sqrt{
\tfrac{p(p-1)}{2}
\int_s^t (t-u)^{-\eta}
\left ( \| B(0)\|_{\HS(U, H_{\gamma-\nicefrac{\eta}{2}})} + b \|X_u \|_{L^p(\P;H_\gamma)} \right)^2
\, du
}
\\
&
\leq
\left ( 
\|B(0)\|_{\HS(U, H_{\gamma-\nicefrac{\eta}{2}})}
+
b
\sup\nolimits_{u\in [0,T]} 
\|X_u\|_{L^p(\P; H_\gamma)}
\right )
\tfrac{\sqrt{p(p-1)}}{\sqrt{2(1-\eta)}} (t-s)^{\nicefrac{(1-\eta)}{2}}
.
\end{split}
\end{equation}
Combining \eqref{70}--\eqref{73} shows that
\begin{equation}
\begin{split}
&
\sup\nolimits_{s\in [0,T), t\in (s,T]}
\frac{
\| (X_t -e^{tA} \xi )
- (X_s- e^{sA}  \xi ) 
\|_{L^p(\P; H_\gamma)}
}
{
(t-s)^\beta 
}
\\
&
\leq
\left( 
\|F(0)\|_{H_{\gamma-\eta}}
+
b
\sup\nolimits_{u\in [0,T]} 
\|X_u\|_{L^p(\P; H_\gamma)}
\right )
\tfrac{2\max \{ 1, T\}}{1-\eta-\beta} 
\\
&
\quad
+
\left ( 
\!
\|B(0)\|_{\HS(U, H_{\gamma-\nicefrac{\eta}{2}})}
+
b
\sup\nolimits_{u\in [0,T]} 
\|X_u\|_{L^p(\P; H_\gamma)}
\!
\right )
\tfrac{\sqrt{2p(p-1)\max\{1,T\}}}{\sqrt{1-\eta-2\beta}} 
.
\end{split}
\end{equation}
The proof of Lemma \ref{holder.prvic} is thus completed.
\end{proof}
\begin{corollary}
\label{modification}
Assume the setting in Section \ref{setting.existence},
assume that $\sup_{t\in [0,T]} \| X_t \|_{L^p(\P;H_\gamma)}<\infty,$
and
assume that $\tfrac{1}{p}<\tfrac{(1-\eta)}{2}.$
Then there exist 
a stochastic process $Y\colon [0,T]\times \Omega \to H_\gamma$
with continuous sample paths
such that for all $t\in[0,T]$ it holds $\P$-a.s.\,that $X_t=Y_t.$
\end{corollary}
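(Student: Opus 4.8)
The plan is to apply the Kolmogorov--Chentsov continuity criterion to the shifted process $Z = (Z_t)_{t\in[0,T]}$ given by $Z_t = X_t - e^{tA}\xi$ for $t\in[0,T]$. Note first that, since $X$ is $(\mathcal{F}_t)_{t\in[0,T]}$-predictable and since $[0,T]\times\Omega \ni (t,\omega) \mapsto e^{tA}(\xi(\omega)) \in H_\gamma$ is continuous in $t$ for each $\omega$, the object $Z$ is a well-defined $H_\gamma$-valued stochastic process with $Z_0 = 0$ $\P$-a.s.

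Next I would invoke Lemma~\ref{holder.prvic}: since by assumption $\sup_{u\in[0,T]}\|X_u\|_{L^p(\P;H_\gamma)}<\infty$, for every $\beta\in[0,\tfrac{(1-\eta)}{2})$ there exists a real number $c_\beta\in[0,\infty)$ such that for all $s,t\in[0,T]$ it holds that $\|Z_t - Z_s\|_{L^p(\P;H_\gamma)} \leq c_\beta\,|t-s|^\beta$. The hypothesis $\tfrac{1}{p} < \tfrac{(1-\eta)}{2}$ allows one to fix $\beta\in(\tfrac{1}{p},\tfrac{(1-\eta)}{2})$, and for this $\beta$ one has $p\beta>1$, so that for all $s,t\in[0,T]$ it holds that $\E\big[\|Z_t - Z_s\|_{H_\gamma}^p\big] \leq c_\beta^p\,|t-s|^{1 + (p\beta-1)}$. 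This is precisely the hypothesis of the Kolmogorov continuity theorem, in the version for stochastic processes with values in a separable Banach space (see, e.g., \cite{dz92}), which therefore yields a modification $\tilde Z\colon[0,T]\times\Omega\to H_\gamma$ of $Z$ with continuous (indeed H\"older continuous of any order strictly less than $\beta - \tfrac{1}{p}$) sample paths.

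Finally I would set $Y_t := \tilde Z_t + e^{tA}\xi$ for $t\in[0,T]$. Since $(e^{tA})_{t\in[0,T]}$ restricts to a strongly continuous semigroup on $H_\gamma$, for every $\omega\in\Omega$ the map $[0,T]\ni t\mapsto e^{tA}(\xi(\omega))\in H_\gamma$ is continuous; hence $Y$ has continuous sample paths. Moreover, for every $t\in[0,T]$ it holds $\P$-a.s.\ that $Y_t = Z_t + e^{tA}\xi = X_t$, which gives the claim.

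I would expect the only points requiring any care to be the verification that the constant furnished by Lemma~\ref{holder.prvic} is finite --- which is exactly what the hypothesis $\sup_{u\in[0,T]}\|X_u\|_{L^p(\P;H_\gamma)}<\infty$ guarantees --- and the appeal to the separable-Banach-space-valued form of Kolmogorov's theorem; neither is a genuine obstacle, so the argument is essentially routine once Lemma~\ref{holder.prvic} is in place.
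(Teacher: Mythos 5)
Your proof is correct and follows exactly the paper's argument: apply Lemma~\ref{holder.prvic} to the shifted process $X_t - e^{tA}\xi$, invoke the Kolmogorov--Chentsov theorem (the hypothesis $\tfrac{1}{p}<\tfrac{(1-\eta)}{2}$ providing the exponent $p\beta>1$), and add back the continuous process $t\mapsto e^{tA}\xi$. Your write-up merely makes explicit the choice of $\beta$ that the paper leaves implicit.
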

\begin{proof}[Proof of Corollary~\ref{modification}]
Note that Lemma \ref{holder.prvic} combined
with the Kolmogorov-Chentsov theorem proves that
there exists a modification with continuous sample paths of the stochastic process 
$[0,T] \times \Omega \ni (t, \omega) \mapsto X_t(\omega)-e^{t A}\xi(\omega) \in H_\gamma.$
In addition, observe that the fact that $A$ is the generator of a strongly continuous semigroup
implies that the 
stochastic process $[0, T] \times \Omega \ni (t, \omega) \mapsto e^{tA}\xi(\omega) \in H_\gamma$
has continuous sample paths.
The proof of Corollary \ref{modification}
is thus completed.
\end{proof}
\section{Convergence of spatial spectral Galerkin 
discretizations}
In this section we establish uniform convergence in probability of 
spatial spectral Galerkin approximations in the case of SEEs with 
semi-globally Lipschitz continuous coefficients (cf., e.g., 
Kurniawan~\cite{MasterRyan}); see Proposition \ref{convergence_probability} below. Proposition \ref{convergence_probability} 
(and its consequence in Corollary \ref{galerkin.convergence} respectively) is a tool used in the proof 
of our main result in Theorem \ref{Finale} below (see Proposition \ref{last} below). In 
our proof of Proposition \ref{convergence_probability} we employ Corollary 2.9 in Cox et al.~\cite{CoxHutzenthalerJentzen2015MonteCarloArxiv} 
(which is a generalization of Lemma~A1 in Bally et al.~\cite{BallyMilletSanz1995}) and a 
nowadays well-known localization procedure (see, e.g., Gy\"{o}ngy~\cite{g98b} 
and Printems \cite[Lemma 4.8]{p01}). 
There are a number of quite similar results in the literature (see, 
e.g., Cox et al.~\cite[Corollary 3.3]{CoxHutzenthalerJentzen2015MonteCarloArxiv},
Gy\"{o}ngy~\cite{g98b},
Kurniawan~\cite[Lemma 4.2.2]{MasterRyan}, 
Printems~\cite[Lemma 4.8]{p01}) 
and Proposition \ref{convergence_probability} is a minor extension of the results in the 
literature. The main difference between Proposition \ref{convergence_probability} and known 
results in the literature is that Proposition \ref{convergence_probability} does only prove 
convergence in probability with no rate of convergence but Proposition 
\ref{convergence_probability} does not assume any growth condition of the eigenvalues of the 
dominant linear operator appearing in the considered SEE; see \eqref{see68} 
below. In particular, Proposition \ref{convergence_probability} also applies to SEEs in which the 
dominant linear operator $ A $ in \eqref{see68} is a bounded linear operator.
\label{section.galerkin}
\subsection{Setting}
\label{setting.probability}
Assume the setting in Section \ref{Main.setting},
let 
$b \in [0, \infty), \eta\in [0, 1),$
$
F\in \mathcal{C} 
( H_\gamma, 
H_{\gamma-\eta}
)
,
B\in \mathcal{C}
(H_{\gamma},
\HS(U, H_{\gamma-\nicefrac{\eta}{2}}) )
,
$
let $I_n \in \mathcal{P}(\H), n\in \N_0,$ satisfy 
$\cup_{n\in \N} \left( \cap_{m\in \{n+1, n+2,\ldots \}} I_m\right) 
=
\H
=
I_0 ,$
let $P_I \in L( H_{-1} ), I \in \mathcal{P}(\H),$ be the linear
operators with the property that for all $x\in H, I \in \mathcal{P}(\H)$ it holds
that
$P_I x = \sum_{h\in I} \langle h, x \rangle_H h,$
and
let $X^{n} \colon [0,T]\times \Omega \to H_\gamma,
n\in \N_0,$
be $(\mathcal{F}_t)_{t\in [0,T]}$-adapted
stochastic processes with continuous sample paths
such that
for all $t\in [0,T], n\in \N_0$ it holds $\P$-a.s.\,that
\begin{equation}
\label{see68}
X_t^{n}
=
e^{(t-s)A}
P_{I_n}(\xi)
+
\int_0^t
e^{(t-s)A}P_{I_n} F(X^{n}_s)
\, ds
+ 
\int_0^t
e^{(t-s)A} P_{I_n} B(X^{n}_s)
\,
dW_s.
\end{equation}
\subsection{Convergence in the case of globally Lipschitz continuous coefficients}
\begin{corollary}
\label{holder}
Assume the setting in Section \ref{setting.probability},
assume that $\xi \in L^p(\P; H_\gamma),$
assume that for all $x, y\in H_\gamma$ it holds that
$\max \{ \| F(x) - F(y) \|_{H_{\gamma-\eta}},
\| B(x) - B(y) \|_{\HS(U,H_{\gamma-\nicefrac{\eta}{2}})} \}
\leq b \|x-y\|_{H_\gamma},$
let $\beta \in [0, \nicefrac{(1-\eta)}{2}),$
and
assume that for all $n\in \N$ it holds that
$\sup_{t\in [0,T]}\| X_t^{n}\|_{L^p(\P; H_{\gamma})} <\infty.$
Then 
\begin{equation}
\label{lak}
\sup_{n\in \N}
\left(\!
\sup_{s\in [0,T), t\in (s,T]}
\frac{
\| ( X_t^{n} - e^{tA} P_{I_n} \xi ) - (X_s^{n} - e^{sA} P_{I_n} \xi)
\|_{L^p(\P; H_\gamma)}
}
{
(t-s)^\beta 
}
\right)
<\infty
.
\end{equation}
\end{corollary}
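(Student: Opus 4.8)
The plan is to derive Corollary~\ref{holder} from Lemma~\ref{holder.prvic}, applied for each fixed $n\in\N$ to the spectral Galerkin approximation $X^n$ regarded as a solution process of the stochastic evolution equation in the setting of Section~\ref{setting.existence} with the data $(\xi,F,B)$ replaced by $(P_{I_n}\xi,\,P_{I_n}F,\,P_{I_n}B)$, and then to combine the resulting estimate with an a priori moment bound on $\sup_{t\in[0,T]}\|X_t^n\|_{L^p(\P;H_\gamma)}$ that is \emph{uniform} in $n$.

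First I would verify that $X^n$ indeed fits the setting in Section~\ref{setting.existence} with the data $(P_{I_n}\xi,P_{I_n}F,P_{I_n}B)$. Since $I_n\subseteq\H$ and $\H$ is an orthonormal basis of $H$ consisting of eigenvectors of $A$, the orthogonal projection $P_{I_n}$ commutes with $(-A)^r$ for every $r\in\R$; consequently $\|P_{I_n}\|_{L(H_r)}\leq1$ for every $r\in\R$ and $\|P_{I_n}\Phi\|_{\HS(U,H_r)}\leq\|\Phi\|_{\HS(U,H_r)}$ for every $r\in\R$ and every $\Phi\in\HS(U,H_r)$. This immediately yields $P_{I_n}F\in\mathcal{C}(H_\gamma,H_{\gamma-\eta})$, $P_{I_n}B\in\mathcal{C}(H_\gamma,\HS(U,H_{\gamma-\nicefrac{\eta}{2}}))$, $P_{I_n}\xi\in L^p(\P;H_\gamma)$, and the global Lipschitz estimate for $P_{I_n}F$ and $P_{I_n}B$ with the same constant $b$. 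Moreover, the continuity of the sample paths of $X^n$ in $H_\gamma$ (hence also the predictability of $X^n$), the affine growth of $F$ and $B$ implied by the Lipschitz assumption, the smoothing estimates for $(e^{tA})_{t\in[0,T]}$, and the hypothesis $\eta<1$ (which renders the relevant singular time integrals finite) show that for all $t\in[0,T]$ it holds $\P$-a.s.\ that $\int_0^t\|e^{(t-s)A}P_{I_n}F(X_s^n)\|_H+\|e^{(t-s)A}P_{I_n}B(X_s^n)\|_{\HS(U,H)}^2\,ds<\infty$, so that all hypotheses of Section~\ref{setting.existence} hold for the triple $(P_{I_n}\xi,P_{I_n}F,P_{I_n}B)$.

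Applying Lemma~\ref{holder.prvic} to $X^n$ and using $\|P_{I_n}F(0)\|_{H_{\gamma-\eta}}\leq\|F(0)\|_{H_{\gamma-\eta}}$ together with $\|P_{I_n}B(0)\|_{\HS(U,H_{\gamma-\nicefrac{\eta}{2}})}\leq\|B(0)\|_{\HS(U,H_{\gamma-\nicefrac{\eta}{2}})}$ then yields, for every $n\in\N$,
\begin{equation}
\begin{split}
&
\sup_{s\in[0,T),\,t\in(s,T]}\frac{\|(X_t^n-e^{tA}P_{I_n}\xi)-(X_s^n-e^{sA}P_{I_n}\xi)\|_{L^p(\P;H_\gamma)}}{(t-s)^\beta}
\\
&
\leq
\frac{\max\{1,T\}\sqrt{2p(p-1)}}{1-\eta-2\beta}\Big(\|F(0)\|_{H_{\gamma-\eta}}+\|B(0)\|_{\HS(U,H_{\gamma-\nicefrac{\eta}{2}})}+2b\sup_{u\in[0,T]}\|X_u^n\|_{L^p(\P;H_\gamma)}\Big)
.
\end{split}
\end{equation}
Since the prefactor on the right-hand side does not depend on $n$, the estimate~\eqref{lak} follows once we establish $\sup_{n\in\N}\sup_{t\in[0,T]}\|X_t^n\|_{L^p(\P;H_\gamma)}<\infty$. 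For this, I would take $L^p(\P;H_\gamma)$-norms in~\eqref{see68}, apply the triangle inequality and the Burkholder-Davis-Gundy type inequality in Lemma~7.7 in Da Prato \& Zabczyk~\cite{dz92}, and use the contractivity of $P_{I_n}$, the smoothing bounds $\|e^{rA}\|_{L(H_{\gamma-\eta},H_\gamma)}\leq r^{-\eta}$ and $\|e^{rA}\|_{L(\HS(U,H_{\gamma-\nicefrac{\eta}{2}}),\HS(U,H_\gamma))}\leq r^{-\nicefrac{\eta}{2}}$ for $r\in(0,\infty)$, and the affine growth of $F$ and $B$ to obtain, writing $g_n(\tau)=\sup_{t\in[0,\tau]}\|X_t^n\|_{L^p(\P;H_\gamma)}$ (which is finite for every $n$ by assumption and nondecreasing in $\tau$), for all $\tau\in[0,T]$ that
\begin{equation}
\begin{split}
g_n(\tau)
&\leq
\|\xi\|_{L^p(\P;H_\gamma)}+\tfrac{\tau^{1-\eta}}{1-\eta}\big(\|F(0)\|_{H_{\gamma-\eta}}+b\,g_n(\tau)\big)
\\
&\quad
+\sqrt{\tfrac{p(p-1)\,\tau^{1-\eta}}{2(1-\eta)}}\,\big(\|B(0)\|_{\HS(U,H_{\gamma-\nicefrac{\eta}{2}})}+b\,g_n(\tau)\big)
.
\end{split}
\end{equation}
Choosing $\tau_0\in(0,T]$ small enough, in a way depending only on $b$, $\eta$ and $p$, that $b\big(\tfrac{\tau_0^{1-\eta}}{1-\eta}+\sqrt{\tfrac{p(p-1)\tau_0^{1-\eta}}{2(1-\eta)}}\big)\leq\tfrac12$ allows one to absorb the $g_n(\tau_0)$-terms on the right-hand side and to conclude that $g_n(\tau_0)$ is bounded by a finite constant independent of $n$. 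Restarting the mild formulation~\eqref{see68} at the times $\tau_0,2\tau_0,\dots$ and iterating this argument over the at most $\lceil T/\tau_0\rceil$ subintervals of length $\tau_0$ (a number depending only on $b$, $\eta$, $p$ and $T$) then produces a finite bound for $\sup_{n\in\N}\sup_{t\in[0,T]}\|X_t^n\|_{L^p(\P;H_\gamma)}$, and inserting this into the previous display completes the proof.

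The step I expect to be the main obstacle is precisely this passage from a pointwise-in-$n$ moment bound to a uniform-in-$n$ one: the hypothesis of Corollary~\ref{holder} only provides, for each fixed $n$, the qualitative finiteness $\sup_{t\in[0,T]}\|X_t^n\|_{L^p(\P;H_\gamma)}<\infty$, and one has to upgrade it via the singular-kernel Gronwall / subinterval-iteration argument above. The qualitative finiteness is exactly what licenses absorbing the $g_n$-terms in the displayed inequality, and the admissibility of the fractional smoothing exponents appearing there (so that the kernels $r\mapsto r^{-\eta}$ and $r\mapsto r^{-\eta/2}$, and the square of the latter, are locally integrable on $[0,T]$) rests on $\eta<1$.
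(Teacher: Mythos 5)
Your proposal is correct and follows essentially the same route as the paper: reduce \eqref{lak} to Lemma~\ref{holder.prvic} applied to each $X^n$ (with data $P_{I_n}\xi$, $P_{I_n}F$, $P_{I_n}B$, using contractivity of $P_{I_n}$ to keep the constants $n$-independent) together with the uniform bound $\sup_{n\in\N}\sup_{t\in[0,T]}\|X_t^n\|_{L^p(\P;H_\gamma)}<\infty$. The only difference is that the paper obtains this uniform moment bound by citing Corollary~6.1.8 in \cite{Jentzen2014SPDElecturenotes}, whereas you derive it explicitly via the absorption-and-subinterval-iteration argument, which is a valid elementary substitute.
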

\begin{proof}[Proof of Corollary \ref{holder}]
Note that, e.g., \cite[Corollary 6.1.8]{Jentzen2014SPDElecturenotes}
shows that
$\sup_{n\in \N}\sup_{t\in [0,T]} \| X_t^{n}\|_{L^p(\P; H_{\gamma})}<\infty.$
Lemma \ref{holder.prvic} hence proves \eqref{lak}. The proof of Corollary \ref{holder}
is thus completed.
\end{proof}
\begin{lemma}
\label{convergence_probability_lipschitz}
Assume the setting in Section \ref{setting.probability},
assume that for all $x, y\in H_\gamma$ it holds that
$\max \{ \| F(x) - F(y) \|_{H_{\gamma-\eta}},
\| B(x) - B(y) \|_{\HS(U,H_{\gamma-\nicefrac{\eta}{2}})} \}
\leq b \|x-y\|_{H_\gamma},$
and
assume that for all $n\in \N_0$ it holds that 
$ \sup_{t\in [0,T]} \| X_t^{n} \|_{L^p(\P; H_\gamma)} < \infty.$
Then 
\begin{equation}
\label{dvojnica}
\lim\nolimits_{ n\to \infty} 
\left(
\sup\nolimits_{t\in [0,T]} \| ( X^0_t - e^{tA} \xi ) - (X_t^{n} - e^{tA}P_{I_n} \xi) \|_{L^p(\P; H_\gamma)}
\right)
=0
\end{equation}
and
\begin{equation}
\label{srna3}
\lim\nolimits_{n\to \infty}
\left(
\sup\nolimits_{t\in [0,T]} \| X^0_t - X_t^{n} \|_{L^p(\P; H_\gamma)}
\right)
=0
.
\end{equation}
\end{lemma}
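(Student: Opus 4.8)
The plan is to derive \eqref{srna3} from \eqref{dvojnica} and to prove \eqref{dvojnica} by a generalized Gronwall argument in which the inhomogeneous error term is controlled by a compactness argument. First I would record a few preliminary facts. Since $I_0 = \H$ we have $P_{I_0} = \operatorname{Id}$, so $X^0$ solves the unprojected SEE and $X^0_0 = \xi$; hence the assumed bound $\sup_{t\in[0,T]}\|X^0_t\|_{L^p(\P;H_\gamma)}<\infty$ gives $\xi\in L^p(\P;H_\gamma)$. The hypothesis $\cup_{n\in\N}(\cap_{m\in\{n+1,n+2,\ldots\}}I_m) = \H$ implies $\lim_{n\to\infty}\|(\operatorname{Id}-P_{I_n})v\|_H = 0$ for every $v\in H$ (dominated convergence in the Fourier coefficients), hence, as $(-A)^r$ commutes with $P_{I_n}$, also $\lim_{n\to\infty}\|(\operatorname{Id}-P_{I_n})V\|_{L^p(\P;H_\gamma)} = 0$ for every $V\in L^p(\P;H_\gamma)$; in particular $\varepsilon_n := \|(\operatorname{Id}-P_{I_n})\xi\|_{L^p(\P;H_\gamma)}\to 0$. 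Moreover $P_{I_n}$ is a contraction on each interpolation space $H_r$, $r\in\R$, and $\|e^{tA}\|_{L(H_\gamma)}\leq1$ for all $t\geq0$; in particular $\|X^0_t - X^n_t\|_{L^p(\P;H_\gamma)}\leq\|(X^0_t - e^{tA}\xi) - (X^n_t - e^{tA}P_{I_n}\xi)\|_{L^p(\P;H_\gamma)} + \varepsilon_n$ for all $t\in[0,T]$, so that \eqref{srna3} follows from \eqref{dvojnica}.

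To prove \eqref{dvojnica}, set $D^n_t := (X^0_t - e^{tA}\xi) - (X^n_t - e^{tA}P_{I_n}\xi)$. Subtracting the mild equations \eqref{see68} for $0$ and for $n$, using $P_{I_0}=\operatorname{Id}$, the splittings $F(X^0_s) - P_{I_n}F(X^n_s) = (\operatorname{Id}-P_{I_n})F(X^0_s) + P_{I_n}(F(X^0_s)-F(X^n_s))$ and $B(X^0_s) - P_{I_n}B(X^n_s) = (\operatorname{Id}-P_{I_n})B(X^0_s) + P_{I_n}(B(X^0_s)-B(X^n_s))$, and the fact that the bounded operator $\operatorname{Id}-P_{I_n}$ commutes with $e^{(t-s)A}$ and may be pulled through the (deterministic and stochastic) integrals, one obtains
\begin{equation*}
D^n_t
= (\operatorname{Id}-P_{I_n})\big(X^0_t - e^{tA}\xi\big)
+ \int_0^t e^{(t-s)A}P_{I_n}\big(F(X^0_s)-F(X^n_s)\big)\,ds
+ \int_0^t e^{(t-s)A}P_{I_n}\big(B(X^0_s)-B(X^n_s)\big)\,dW_s .
\end{equation*}
Writing $v_n(t) := \|D^n_t\|_{L^p(\P;H_\gamma)}$ and $a_n := \sup_{t\in[0,T]}\|(\operatorname{Id}-P_{I_n})(X^0_t - e^{tA}\xi)\|_{L^p(\P;H_\gamma)}$, and bounding the last two summands by means of the smoothing estimates in \cite[Theorem~2.5.34 and Lemma~2.5.35]{Jentzen2014SPDElecturenotes}, the Burkholder--Davis--Gundy type inequality in Lemma~7.7 in Da Prato \& Zabczyk~\cite{dz92}, the contractivity of $P_{I_n}$, the global Lipschitz hypothesis, and the pointwise bound $\|X^0_s - X^n_s\|_{H_\gamma}\leq\|D^n_s\|_{H_\gamma}+\varepsilon_n$, I would arrive at
\begin{equation*}
v_n(t)
\leq a_n + b\!\int_0^t (t-s)^{-\eta}\big(v_n(s)+\varepsilon_n\big)\,ds
+ b\sqrt{\tfrac{p(p-1)}{2}}\,\bigg(\int_0^t (t-s)^{-\eta}\big(v_n(s)+\varepsilon_n\big)^2\,ds\bigg)^{\!\nicefrac{1}{2}}
\end{equation*}
for all $t\in[0,T]$ (the integrals being finite since $\eta<1$). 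Squaring, splitting off the $\varepsilon_n$-contributions, and applying the Cauchy--Schwarz inequality to the drift integral reduces this to $v_n(t)^2\leq c_n + b_1\int_0^t(t-s)^{-\eta}v_n(s)^2\,ds$ for all $t\in[0,T]$, where $b_1 = b_1(b,p,\eta,T)\in[0,\infty)$ is independent of $n$ and $c_n = c_n(a_n,\varepsilon_n,b,p,\eta,T)\in[0,\infty)$ satisfies $\lim_{n\to\infty}c_n = 0$ provided $\lim_{n\to\infty}a_n = 0$. Since each $v_n$ is bounded on $[0,T]$ (by the assumed moment bounds together with $\|e^{tA}\|_{L(H_\gamma)}\leq1$ and $\xi\in L^p(\P;H_\gamma)$), a generalized (singular-kernel) Gronwall inequality --- reducible to the classical Gronwall lemma upon iterating the estimate finitely often until the kernel becomes bounded --- gives $\sup_{t\in[0,T]}v_n(t)^2\leq c_n\,C$ for a finite $C = C(b,p,\eta,T)$, whence $\sup_{t\in[0,T]}v_n(t)\to 0$, which is \eqref{dvojnica}.

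It remains to verify $\lim_{n\to\infty}a_n = 0$, which I expect to be the main obstacle, because $P_{I_n}\to\operatorname{Id}$ holds only in the strong operator topology (no rate is available here, as no growth condition on $\lambda$ is imposed in this section), whereas $a_n$ requires uniformity in $t$. I would resolve this by a compactness argument. By Lemma~\ref{holder.prvic}, applied to $X^0$ (its hypotheses hold: $\sup_{t\in[0,T]}\|X^0_t\|_{L^p(\P;H_\gamma)}<\infty$ is assumed, and an $(\mathcal{F}_t)_{t\in[0,T]}$-adapted process with continuous sample paths is predictable), the mapping $[0,T]\ni t\mapsto X^0_t - e^{tA}\xi\in L^p(\P;H_\gamma)$ is H\"older continuous, hence continuous, so its image $\mathcal{K}$ is a compact subset of $L^p(\P;H_\gamma)$. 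The operators $V\mapsto(\operatorname{Id}-P_{I_n})V$ on $L^p(\P;H_\gamma)$ are bounded uniformly in $n$ (by $1$) and converge to $0$ pointwise (as shown above), hence converge to $0$ uniformly on the compact set $\mathcal{K}$; therefore $a_n = \sup_{w\in\mathcal{K}}\|(\operatorname{Id}-P_{I_n})w\|_{L^p(\P;H_\gamma)}\to 0$. Combining this with the Gronwall estimate of the previous paragraph completes the argument; everything else consists of routine applications of the smoothing property of $(e^{tA})_{t\geq0}$ and of the Burkholder--Davis--Gundy inequality.
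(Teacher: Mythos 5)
Your proof is correct, and its overall architecture coincides with the paper's: a generalized (singular-kernel) Gronwall estimate for the difference of the two mild solutions, with the inhomogeneous term being the projection error $\sup_{t}\|(\operatorname{Id}_H-P_{I_n})(X^0_t-e^{tA}\xi)\|_{L^p(\P;H_\gamma)}$, whose decay is the real content of the lemma and rests in both cases on the H\"older continuity of $t\mapsto X^0_t-e^{tA}\xi$ supplied by Lemma~\ref{holder.prvic}/Corollary~\ref{holder}. The two arguments diverge only in how that uniformity in $t$ is obtained. The paper passes to the monotone sets $J_n=\cap_{m>n}I_m$, so that the continuous functions $f_n(t)=\|P_{\H\setminus J_n}(X^0_t-e^{tA}\xi)\|_{L^p(\P;H_\gamma)}$ are non-increasing in $n$ and pointwise null, and invokes Dini's theorem; you instead note that the image of $[0,T]$ under the continuous map $t\mapsto X^0_t-e^{tA}\xi$ is compact in $L^p(\P;H_\gamma)$ and that the uniformly bounded operators $\operatorname{Id}_H-P_{I_n}$ tend to zero strongly, hence uniformly on compacta. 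Both devices are standard and equally valid here; yours avoids introducing the auxiliary sets $J_n$ and does not need any monotonicity of the family $(I_n)_{n\in\N}$ beyond the stated hypothesis, while the paper's avoids an explicit compactness argument and reuses the same estimate to handle $P_{\H\setminus I_n}X^0_t$ directly (the paper also cites a prepackaged Gronwall-type bound from \cite{Jentzen2014SPDElecturenotes} where you carry out the iteration by hand, but that is only a presentational difference).
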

\begin{proof}[Proof of Lemma~\ref{convergence_probability_lipschitz}]
Let $\mathcal{E}_r\colon [0,\infty) \to [0,\infty), r\in (0,\infty),$
be the functions with the property that
for all $x\in [0,\infty), r\in (0,\infty)$
it holds that
$\mathcal{E}_r(x)= 
\big(
\sum_{n=0}^\infty \tfrac{(x^2 \Gamma(r))^n}{\Gamma(nr+1)}
\big)^{\nicefrac{1}{2}}
$ 
(cf., e.g., Henry~\cite[Section 7.1]{h81} and \cite[Definition 3.3.1]{Jentzen2014SPDElecturenotes}).
Observe that, e.g., \cite[Lemma 6.1.7]{Jentzen2014SPDElecturenotes} proves that
for all $ n \in \N $ it holds that
\begin{equation}
\begin{split}
\label{srna21}
\!\!\!\!
&
\sup\nolimits_{t\in [0,T]}
\| X_t^0 - X_t^{n} \|_{L^p(\P; H_\gamma)}
\!
\leq
\!
\sqrt{2}
\sup\nolimits_{t\in [0,T]}
\| P_{\H \setminus I_n} X_t^0 
\|_{L^p(\P; H_\gamma)}
\mathcal{E}_{1-\eta}
\Big(
\tfrac{ \sqrt{2} T^{1-\eta} b}{\sqrt{1-\eta}}
+
\!
\sqrt{T^{1-\eta}p(p-1)}b
\Big)
.
\end{split}
\end{equation}
Let $ J_n \subseteq \H $, $ n \in \N,$ 
be the sets with the property 
that for all $ n \in \N $ it holds that $J_n = \cap_{m\in \{n+1, n+2,\ldots \} } I_m.$
Next, let 
$f_n \colon [0,T] \to [0,\infty), n\in \N,$ 
be the functions
with the property that for all $t\in [0,T], n \in \N$ 
it holds that
$f_n(t)=\| P_{\H\setminus J_n} (X_t^0 - e^{tA}\xi) \|_{L^p(\P; H_\gamma)}.$
Corollary \ref{holder} proves that
the functions $f_n, n\in \N,$ 
are continuous.
Moreover, note that the sequence $(f_n)_{n\in \N}$ is non-increasing. Furthermore, observe that Lebesgue's dominated convergence theorem shows that for all $ t \in [0,T] $ it holds that $\lim_{ n \to \infty } f_n( t ) = 0.$ We can thus apply Dini's theorem to obtain that $\lim_{ n \to \infty } \sup_{ t \in [0,T] } f_n(t) = 0,$ i.e., that
\begin{equation}
\begin{split}
\lim\nolimits_{ n \to \infty} 
\left(
\sup\nolimits_{t\in [0,T]}
\| P_{\H \setminus J_n} (X_t^0 - e^{tA} \xi) \|_{L^p(\P; H_\gamma)}
\right)
=
0.
\end{split}
\end{equation}
This proves that
\begin{equation}
\label{srna1}
\begin{split}
\lim\nolimits_{ n \to \infty} 
\left(
\sup\nolimits_{t\in [0,T]}
\| P_{\H \setminus I_n} (X_t^0- e^{tA}\xi) \|_{L^p(\P; H_\gamma)}
\right)
=
0.
\end{split}
\end{equation}
Moreover,
Lebesgue's theorem of dominated convergence proves that 
$\lim_{n \to \infty} \| P_{ \H \setminus I_n} \xi \|_{L^p(\P; H_\gamma)} = 0.$
This and the fact that for all $n \in \N$ it holds that
$\sup_{t\in [0,T]} \| P_{ \H \setminus I_n} e^{tA} \xi \|_{L^p(\P; H_\gamma)}
\leq  \| P_{ \H \setminus I_n} \xi \|_{L^p(\P; H_\gamma)}$
imply that
\begin{equation}
\label{srna2}
\lim\nolimits_{ n\to \infty} 
\left(
\sup\nolimits_{t\in [0,T]} 
\| P_{ \H \setminus I_n} e^{tA} \xi \|_{L^p(\P; H_\gamma)} 
\right)
= 0
.
\end{equation}
Combining \eqref{srna21}, \eqref{srna1}, \eqref{srna2}, and the triangle inequality
proves \eqref{srna3}.
Finally, observe that \eqref{srna3}, \eqref{srna2}, and the triangle
inequality prove \eqref{dvojnica}.
The proof of Lemma \ref{convergence_probability_lipschitz}
is thus completed.
\end{proof}
\begin{corollary}
\label{sup.inside}
Assume the setting in Section \ref{setting.probability},
assume that $p(1-\eta)>2,$
assume that for all $n\in \N_0 $ it holds that 
$\sup_{t\in [0,T]}\| X_t^{n}\|_{L^p(\P; H_\gamma)} 
 <\infty,$
and
assume that for all $x, y\in H_\gamma$ it holds that
$\max \{ \| F(x) - F(y) \|_{H_{\gamma-\eta}},
\| B(x) - B(y) \|_{\HS(U,H_{\gamma-\nicefrac{\eta}{2}})} \}
\leq b \|x-y\|_{H_\gamma} $.
Then 
\begin{equation}
\label{result}
\lim\nolimits_{ n\to \infty} 
\left(
\E \! \left [\sup\nolimits_{t\in [0,T]} \|  X^0_t -  X_t^{n} \|_{H_\gamma}^p \right ]
\right)
=0.
\end{equation}
\end{corollary}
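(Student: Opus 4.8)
The plan is to upgrade the convergence in $L^p(\P;H_\gamma)$ established in Lemma~\ref{convergence_probability_lipschitz} (specifically \eqref{srna3}) to convergence of the $p$-th moment of the supremum over $[0,T]$. The natural tool is a combination of a uniform (in $n$) temporal Hölder estimate together with a Kolmogorov--Chentsov / factorization-type bound that turns such a Hölder estimate into a bound on the $L^p$-norm of the supremum, and then a uniform integrability argument. Concretely, I would proceed as follows.

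First, I would record that $p(1-\eta)>2$ allows us to pick $\beta\in(\nicefrac1p,\nicefrac{(1-\eta)}{2})$, so that Corollary~\ref{holder} applies and yields a finite constant $c\in[0,\infty)$, independent of $n$, with
\begin{equation}
\sup_{n\in\N}\ \sup_{\substack{s\in[0,T),\,t\in(s,T]}}
\frac{\|(X_t^{n}-e^{tA}P_{I_n}\xi)-(X_s^{n}-e^{sA}P_{I_n}\xi)\|_{L^p(\P;H_\gamma)}}{(t-s)^\beta}
\leq c .
\end{equation}
By the Kolmogorov--Chentsov theorem in the quantitative form (e.g.\ as in the references cited around Corollary~\ref{modification}, or a standard Garsia--Rodemich--Rumsey / factorization argument), since $\beta p>1$ there is a constant $\tilde c\in[0,\infty)$, depending only on $p$, $\beta$, $T$, such that for every process $Z$ with continuous sample paths one has $\E[\sup_{t\in[0,T]}\|Z_t\|_{H_\gamma}^p]\leq \tilde c\,( \|Z_0\|_{L^p(\P;H_\gamma)}^p + \sup_{s<t}\frac{\|Z_t-Z_s\|_{L^p(\P;H_\gamma)}^p}{(t-s)^{\beta p}}\cdot T^{\beta p-1}\cdot T )$ (the precise bookkeeping of the $T$-powers is routine). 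Applying this to $Z=(X^0-e^{\cdot A}\xi)-(X^{n}-e^{\cdot A}P_{I_n}\xi)$ gives, using $Z_0=0$,
\begin{equation}
\E\!\left[\sup_{t\in[0,T]}\|(X^0_t-e^{tA}\xi)-(X^{n}_t-e^{tA}P_{I_n}\xi)\|_{H_\gamma}^p\right]
\leq \tilde c\, T^{\beta p}\,
\sup_{\substack{s<t}}\frac{\|(\cdots)_t-(\cdots)_s\|_{L^p(\P;H_\gamma)}^p}{(t-s)^{\beta p}} .
\end{equation}

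Second, I would interpolate. The inner Hölder quotient on the right is uniformly bounded (by $2c$, say, via the triangle inequality and the previous display), while \eqref{dvojnica} in Lemma~\ref{convergence_probability_lipschitz} shows that $\sup_{t\in[0,T]}\|(X^0_t-e^{tA}\xi)-(X^{n}_t-e^{tA}P_{I_n}\xi)\|_{L^p(\P;H_\gamma)}\to0$. Combining a bounded Hölder quotient with a vanishing $L^p$-modulus — for instance by writing $\|Z_t-Z_s\|_{L^p}\leq \min\{(2c)(t-s)^\beta,\ 2\sup_r\|Z_r\|_{L^p}\}$ and splitting the supremum over $|t-s|$ large versus small — yields that the Hölder quotient for a slightly smaller exponent $\beta'\in(\nicefrac1p,\beta)$ tends to $0$ as $n\to\infty$. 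Plugging this improved (vanishing) quotient, with exponent $\beta'$, back into the Kolmogorov--Chentsov bound gives
\begin{equation}
\lim_{n\to\infty}\E\!\left[\sup_{t\in[0,T]}\|(X^0_t-e^{tA}\xi)-(X^{n}_t-e^{tA}P_{I_n}\xi)\|_{H_\gamma}^p\right]=0 .
\end{equation}
Finally, since $A$ generates a strongly continuous semigroup, $t\mapsto e^{tA}(\xi-P_{I_n}\xi)$ has continuous sample paths and $\sup_{t\in[0,T]}\|e^{tA}(\xi-P_{I_n}\xi)\|_{H_\gamma}\leq\|\xi-P_{I_n}\xi\|_{H_\gamma}\to0$ $\P$-a.s.\ with the $H_\gamma$-integrable dominating function $2\|\xi\|_{H_\gamma}$, so by dominated convergence $\E[\sup_{t\in[0,T]}\|e^{tA}\xi-e^{tA}P_{I_n}\xi\|_{H_\gamma}^p]\to0$. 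Adding the two contributions via the triangle inequality in $L^p(\P;\mathcal C([0,T],H_\gamma))$ (or just $(a+b)^p\leq 2^{p-1}(a^p+b^p)$) yields \eqref{result}.

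The main obstacle is the interpolation step that converts "bounded Hölder seminorm, plus $L^p$-convergence at fixed times uniformly in $t$" into "vanishing Hölder seminorm for a slightly smaller exponent, hence vanishing $L^p$-norm of the supremum." One must be careful that the quantity controlled uniformly in $n$ by Corollary~\ref{holder} is the Hölder seminorm of the \emph{$\xi$-subtracted} process, so the splitting and all the estimates should be carried out for $X^{n}_t-e^{tA}P_{I_n}\xi$ rather than $X^{n}_t$ directly, and the semigroup term must be handled separately as above. Everything else — the choice of $\beta$, the Kolmogorov--Chentsov constant, the powers of $T$ — is routine.
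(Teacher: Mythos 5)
Your proposal is correct and follows essentially the same route as the paper: a uniform-in-$n$ H\"older bound from Corollary~\ref{holder} (available because $p(1-\eta)>2$ lets one take $\beta>\nicefrac{1}{p}$), the uniform-in-time $L^p$-convergence from Lemma~\ref{convergence_probability_lipschitz}, an interpolation/Kolmogorov-type step to pass to the supremum inside the expectation, and a separate limit argument for the $e^{tA}(\xi-P_{I_n}\xi)$ term (you use dominated convergence, the paper uses Fatou). The only difference is that you spell out the interpolation step explicitly, whereas the paper invokes it as a black box via Corollary~2.9 in Cox et al.\ (a generalization of Lemma~A1 in Bally et al.); your sketch of that step is sound.
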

\begin{proof}[Proof of Corollary~\ref{sup.inside}]
First of all, observe that Corollary \ref{holder}, Lemma \ref{convergence_probability_lipschitz}, and
Corollary 2.9 in 
Cox et al.~\cite{CoxHutzenthalerJentzen2015MonteCarloArxiv} (cf.\ also \cite[Lemma~A1]{BallyMilletSanz1995})
prove that
\begin{equation}
\label{prvi.del}
\lim\nolimits_{ n \to \infty} 
\left(
\E
\!
\left[
\sup\nolimits_{t\in [0,T]}
\|  ( X_t^0 - e^{tA}  \xi ) - ( X_t^{n} - e^{tA} P_{I_n}  \xi) \|_{H_\gamma}^p
\right]
\right)
=
0
.
\end{equation}
Next note that Fatou's lemma shows that
\begin{equation}
\label{drugi.del}
\begin{split}
&
\limsup\nolimits_{ n \to \infty} 
\left(
\E
\!
\left[
\sup\nolimits_{t\in [0,T]}
\|e^{tA}( P_\H -P_{I_n})\xi \|_{H_\gamma}^p
\right]
\right)
\leq
\limsup\nolimits_{ n \to \infty} 
\left(
\E
\!
\left[
\|(P_\H-P_{I_n} )\xi \|_{H_\gamma}^p
\right]
\right)
\\
&
\leq
\E
\!
\left[
\limsup\nolimits_{ n \to \infty} 
\|(P_\H -P_{I_n})\xi \|_{H_\gamma}^p
\right]
=
0.
\end{split}
\end{equation}
Combining \eqref{prvi.del} and \eqref{drugi.del}
with the triangle inequality proves \eqref{result}.
The proof of Corollary \ref{sup.inside} is thus completed.
\end{proof}
\subsection{Convergence in the case of semi-globally Lipschitz 
continuous coefficients}
\begin{proposition}
\label{convergence_probability}
Assume the setting in Section \ref{setting.probability}
and assume that for every $R\in [0,\infty)$ 
there exists a real number $K\in [0,\infty)$ such that for all $x, y \in H_\gamma$ with $\max \{ \| x\|_{H_\gamma}, \| y \|_{H_\gamma} \}\leq R$
it holds that 
$\max\{ \|F(x) - F(y)\|_{H_{\gamma-\eta}}, \|B(x)- B(y) \|_{\HS(U,H_{\gamma-\nicefrac{\eta}{2})}}\}
\leq K \|x-y\|_{H_\gamma}.$
Then for all $\varepsilon\in (0,\infty)$ it holds that
\begin{equation}
\lim\nolimits_{ n \to \infty} 
\left(
\P\!\left[ \sup\nolimits_{t\in [0,T]} \| X_t^0 - X_t^{n}\|_{H_\gamma}\geq \varepsilon \right] 
\right)
= 0.
\end{equation}
\end{proposition}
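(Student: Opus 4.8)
The plan is to employ the standard localization procedure which reduces the semi-globally Lipschitz case to the globally Lipschitz case already treated in Corollary~\ref{sup.inside}. First I would fix $R \in [0,\infty)$ and introduce cut-off versions of the nonlinearities: let $\chi_R \colon H_\gamma \to [0,1]$ be a suitable Lipschitz continuous cut-off function with $\chi_R(x) = 1$ for $\|x\|_{H_\gamma} \leq R$ and $\chi_R(x) = 0$ for $\|x\|_{H_\gamma} \geq R+1$ (e.g.\ $\chi_R(x) = \max\{0, \min\{1, R+1-\|x\|_{H_\gamma}\}\}$), and set $F_R = \chi_R \cdot F$, $B_R = \chi_R \cdot B$. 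By the assumed local Lipschitz property together with the boundedness of $\chi_R$ and the Lipschitz continuity of $\chi_R$, the functions $F_R \colon H_\gamma \to H_{\gamma-\eta}$ and $B_R \colon H_\gamma \to \HS(U,H_{\gamma-\nicefrac{\eta}{2}})$ are globally Lipschitz continuous. Let $X^{R,n} \colon [0,T]\times\Omega \to H_\gamma$, $n \in \N_0$, be the corresponding Galerkin solutions, i.e.\ the processes satisfying \eqref{see68} with $F$, $B$ replaced by $F_R$, $B_R$; these exist, are unique up to modifications, and satisfy $\sup_{n\in\N_0}\sup_{t\in[0,T]}\|X^{R,n}_t\|_{L^p(\P;H_\gamma)} < \infty$ by the standard a priori bounds for globally Lipschitz coefficients (e.g.\ \cite[Corollary~6.1.8]{Jentzen2014SPDElecturenotes}).

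The second step is to apply Corollary~\ref{sup.inside} to the cut-off problem: since $F_R$, $B_R$ are globally Lipschitz, $p(1-\eta) > 2$, and the uniform moment bounds hold, Corollary~\ref{sup.inside} yields
\begin{equation*}
  \lim_{n\to\infty}\E\!\left[\sup_{t\in[0,T]}\|X^{R,0}_t - X^{R,n}_t\|_{H_\gamma}^p\right] = 0,
\end{equation*}
and in particular $\sup_{t\in[0,T]}\|X^{R,0}_t - X^{R,n}_t\|_{H_\gamma} \to 0$ in probability as $n\to\infty$. The third step is to control the localization error. Introduce the stopping times $\tau_{R,n} = \inf\{t\in[0,T] \colon \|X^n_t\|_{H_\gamma} > R\} \wedge T$ (with the convention $\inf\emptyset = T$), and similarly $\tau_{R,n}^{\mathrm{cut}}$ for $X^{R,n}$. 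By pathwise uniqueness for the cut-off equations, on the event $\{\sup_{t\in[0,T]}\|X^n_t\|_{H_\gamma} \leq R\}$ one has $X^n_t = X^{R,n}_t$ for all $t\in[0,T]$ (the original process solves the cut-off equation up to the first exit from the ball of radius $R$, and below that radius the two equations coincide). Hence
\begin{equation*}
  \P\!\left[\sup_{t\in[0,T]}\|X^0_t - X^n_t\|_{H_\gamma} \geq \varepsilon\right]
  \leq \P\!\left[\sup_{t\in[0,T]}\|X^{R,0}_t - X^{R,n}_t\|_{H_\gamma} \geq \varepsilon\right]
  + \P\!\left[\sup_{t\in[0,T]}\|X^0_t\|_{H_\gamma} > R\right]
  + \P\!\left[\sup_{t\in[0,T]}\|X^n_t\|_{H_\gamma} > R\right].
\end{equation*}

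For the final step I would handle the three terms on the right. The first tends to $0$ as $n\to\infty$ for each fixed $R$ by the cut-off convergence above. For the second and third terms one needs the uniform-in-$n$ tightness bound $\sup_{n\in\N_0}\P[\sup_{t\in[0,T]}\|X^n_t\|_{H_\gamma} > R] \to 0$ as $R\to\infty$; this follows from Corollary~\ref{holder} (which provides, via the Kolmogorov--Chentsov / Garsia--Rodemich--Rumsey estimate, a uniform-in-$n$ bound on $\E[\sup_{t\in[0,T]}\|X^n_t - e^{tA}P_{I_n}\xi\|_{H_\gamma}^p]$) combined with $\sup_{t\in[0,T]}\|e^{tA}P_{I_n}\xi\|_{H_\gamma} \leq \|\xi\|_{H_\gamma}$, so that $\sup_{n\in\N_0}\E[\sup_{t\in[0,T]}\|X^n_t\|_{H_\gamma}^p] < \infty$, and Markov's inequality. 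Given $\varepsilon, \delta \in (0,\infty)$, first choose $R$ large enough that the supremum over $n$ of the last two probabilities is below $\delta/2$, then choose $n$ large enough that the first probability is below $\delta/2$; this yields $\limsup_{n\to\infty}\P[\sup_{t\in[0,T]}\|X^0_t - X^n_t\|_{H_\gamma}\geq\varepsilon] \leq \delta$, and letting $\delta\downarrow 0$ completes the proof. The main obstacle is the pathwise-uniqueness/localization identification $X^n = X^{R,n}$ on the low-norm event, which must be justified carefully because the cut-off modifies the equation globally; the cleanest route is to argue via uniqueness of solutions of the cut-off SEE together with the observation that, stopped at $\tau_{R,n}$, the process $X^n$ also solves the cut-off SEE.
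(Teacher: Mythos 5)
Your overall strategy -- cut off the nonlinearities, apply the globally Lipschitz convergence result (Corollary~\ref{sup.inside}) to the truncated problem, and identify the truncated and original processes on the event where the norms stay small -- is exactly the paper's strategy. However, there is a genuine gap in your final step, and it is precisely the point where the paper's proof is more delicate than yours.

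Your decomposition requires the uniform-in-$n$ tightness bound
$\sup_{n\in\N_0}\P[\sup_{t\in[0,T]}\|X^n_t\|_{H_\gamma}>R]\to 0$ as $R\to\infty$, and you propose to obtain it from Corollary~\ref{holder} applied to the original processes $X^n$. This is not available: Corollary~\ref{holder} assumes globally Lipschitz coefficients, $\xi\in L^p(\P;H_\gamma)$, and the a priori bounds $\sup_t\|X^n_t\|_{L^p(\P;H_\gamma)}<\infty$, none of which is part of the hypotheses of Proposition~\ref{convergence_probability} (the setting only assumes $\xi\in\mathcal{M}(\mathcal{F}_0,\mathcal{B}(H_\gamma))$ and that the $X^n$ exist with continuous sample paths). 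Trying to route the tightness through the cut-off processes instead is also problematic, since the resulting moment bounds grow with the truncation level $R$ and still require integrability of $\xi$. The paper circumvents the need for any tightness of the family $(X^n)_{n\in\N}$: in \eqref{gran0} it introduces the stopping time $\rho^{n,\varepsilon}$ at which the \emph{difference} $\|X^0_t-X^n_t\|_{H_\gamma}$ first reaches $\varepsilon\le 1$, so that on the event $\{\sup_t\|X^0_t\|_{H_\gamma}<R\}$ the approximation $X^n$ automatically stays in the ball of radius $R+1$ up to time $\rho^{n,\varepsilon}$. As a consequence only the single term $\P[\sup_t\|X^0_t\|_{H_\gamma}\geq R]$ remains, and this tends to $0$ as $R\to\infty$ simply because $X^0$ has continuous sample paths -- no moment assumptions needed. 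A secondary point: since $\xi$ need not be $p$-integrable, your cut-off processes $X^{R,n}$ need not have finite $L^q$-moments unless you also truncate the initial condition (the paper starts $X^{n,R}$ from $\1_{\{\|P_{I_n}\xi\|_{H_\gamma}<R\}}P_{I_n}\xi$); without this, Corollary~\ref{sup.inside} cannot be invoked for the cut-off problem either.
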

\begin{proof}[Proof of Proposition~\ref{convergence_probability}]
Throughout this proof let $ q \in ( \nicefrac{ 2 }{ (1 - \eta )
} , \infty) $ be a real number and let 
$\phi_R\colon \R \to [0,1], R\in (0,\infty),$
be infinitely often differentiable functions such that for all $x\in [-R,R]$ it holds that $\phi_R(x)=1$
and such that for all  $x\in (-\infty, -R-1]\cup [R+1,\infty)$ it holds that $\phi_R(x)=0.$
Moreover, let
$F_R\colon H_\gamma \to H_{\gamma-\eta}, R\in (0,\infty),$
and
$B_R\colon H_\gamma \to \HS(U,H_{\gamma- \nicefrac{\eta}{2}}), R\in (0,\infty),$
be the functions 
with the property
that for all $x\in H_\gamma, R\in (0,\infty)$ it holds that
$F_R(x)=F(x)\phi_R(\|x \|_{H_\gamma})$
and
$B_R(x)=B(x) \phi_R(\| x \|_{H_\gamma}).$
In the next step we observe that, 
e.g., Theorem~5.1 in \cite{JentzenKloeden2011}
and, e.g., Corollary~\ref{modification} (see also, e.g., Van Neerven et al.~\cite[Theorem~6.2]{vvw08}) prove that there
exist
up to modification unique
$(\mathcal{F}_t)_{t\in [0,T]}$-adapted
stochastic processes
$X^{n, R} \colon [0,T]\times \Omega \to H_\gamma, R\in (0,\infty), n\in \N_0,$
with continuous sample paths
such that for all $R\in (0,\infty), n\in \N_0$ it holds
$\sup_{t\in [0,T]} \|X_t^{n,R}\|_{L^q(\P; H_\gamma)}<\infty$
and
such that for all
$t\in [0,T], R\in (0,\infty), n\in \N_0$ it holds $\P$-a.s.\,that
\begin{equation}
X^{n,R}_t
=
e^{tA}
\1_{\{ \| P_{I_n} \xi \|_{H_\gamma} < R\} }
P_{I_n}
(\xi)
+ 
\int_0^t e^{(t-s)A} P_{I_n} F_R(X_s^{n,R})\, ds
+
\int_0^t
e^{(t-s)A}
P_{I_n}
B_R(X_s^{n, R})
\, dW_s.
\end{equation}
Furthermore, note that for all $x\in H_\gamma, R\in [0,\infty)$ 
with $\| x \|_{H_\gamma}\leq R$
it holds that $F_R(x)=F(x)$ and $B_R(x)=B(x).$
Next, let $\tau^{n,R}\colon \Omega \to [0,T], n\in \N_0, R\in (0,\infty),$ and
$\rho^{n,R}\colon \Omega \to [0,T], n\in \N, R\in (0,\infty),$
be the stopping times with the property that
for all $n\in \N_0, R\in (0,\infty)$ it holds that
\begin{equation}
\tau^{n,R}
=
\inf \!\left( \{t\in [0,T]\colon \|X_t^{n}\|_{H_\gamma}\geq R \} \cup \{T\} \right)
\end{equation}
and
\begin{equation}
\rho^{n, R}
=
\inf \!\left( \{ t\in [0,T]\colon \|X_t^0 - X_t^{n}\|_{H_\gamma} \geq R \} \cup \{T\} \right).
\end{equation}
Observe that, e.g.,
Lemma~4.2.2 
in Kurniawan~\cite{MasterRyan} and Markov's inequality 
prove that
for all $n\in \N, R\in (0,\infty), q\in (\nicefrac{2}{(1-\eta)}, \infty), \varepsilon\in (0,1)$
it holds that
\begin{equation}
\begin{split}
\label{gran0}
&
\P \!\left[ \sup\nolimits_{t\in [0,T]} \| X_t^0 - X_t^{n} \|_{H_\gamma} \geq \varepsilon \right ]
-
\P\!\left[ \sup\nolimits_{t\in [0,T]} \|X_t^0 \|_{H_\gamma} \geq R \right]
\\
&
\leq
\P \!\left[ \!\left\{ \sup\nolimits_{t\in [0,T]}\| X_t^0 -X_t^{n}\|_{H_\gamma} \geq \varepsilon \right\}
\cap
\{ \sup\nolimits_{t\in [0,T]} \|X_t^0 \|_{H_\gamma} < R \} \right]
\\
&
=
\P \!\left[ \!\left\{ \sup\nolimits_{t\in [0, \rho^{n, \varepsilon} ]}\| X_t^0 -X_t^{n} \|_{H_\gamma} \geq \varepsilon \right\}
\cap
\!\left\{ \sup\nolimits_{t\in [0,T]} \|X_t^0 \|_{H_\gamma} < R \right \} \right]
\\
&
=
\P \!\left[ \!\left\{ \sup\nolimits_{t\in [0, \rho^{n, \varepsilon} ]}
\1_{
\{
\tau^{0, R}>t, \rho^{n, \varepsilon} \geq t 
\}
}
\| X_t^0 -X_t^{n}\|_{H_\gamma} \geq \varepsilon \right\}
\cap
\!\left\{ \sup\nolimits_{t\in [0,T]} \|X_t^0  \|_{H_\gamma} < R \right \} \right]
\\
&
=
\P \!\left[ \!\left\{ \sup\nolimits_{t\in [0, \rho^{n, \varepsilon} ]}
\1_{
\{
\tau^{0, R}>t, \rho^{n, \varepsilon} \geq t, \tau^{n, R+1}>t
\}
}
\| X_t^0 -X_t^{n} \|_{H_\gamma} \geq \varepsilon \right\}
\cap
\!\left\{ \sup\nolimits_{t\in [0,T]} \|X_t^0 \|_{H_\gamma} < R \right \} \right]
\\
&
\leq
\P \!\left[  \sup\nolimits_{t\in [0, \rho^{n, \varepsilon} ]}
\1_{
\{
\tau^{0, R}>t, \rho^{n, \varepsilon} \geq t, \tau^{n, R+1}>t 
\}
}
\| X_t^0 -X_t^{n} \|_{H_\gamma} \geq \varepsilon 
 \right]
\\
&
=
\P \!\left[ \sup\nolimits_{t\in [0, \rho^{n, \varepsilon} ]}
\1_{
\{
\tau^{0, R}>t, \rho^{n, \varepsilon} \geq t, \tau^{n, R+1}>t 
\}
}
\big\| X_{\min\{t, \tau^{0, R+1} \} }^0 -X_{\min\{t, \tau^{n, R+1} \} }^{n} \big\|_{H_\gamma} \geq \varepsilon 
 \right]
\\
&
=
\P \!\left[ \! \sup_{t\in [0, \rho^{n, \varepsilon} ]}
\!\!
\1_{
\{
\tau^{0, R}>t, \rho^{n, \varepsilon} \geq t, \tau^{n, R+1}>t 
\}
}
\big \| 
\1_{ \{ \| \xi \|_{H_\gamma}<R+1 \}}  X_{\min\{t, \tau^{0, R+1} \} }^0
-
\1_{ \{ \|P_{I_n} \xi \|_{H_\gamma}<R+1 \}}
X_{\min\{t, \tau^{n, R+1} \} }^{n} \big\|_{H_\gamma} \geq \varepsilon 
 \right]
\\
&
=
\P \!\left[\!  \sup_{t\in [0, \rho^{n, \varepsilon} ]}
\!\!
\1_{
\{
\tau^{0, R}>t, \rho^{n, \varepsilon} \geq t, \tau^{n, R+1}>t 
\}
}
\big\| 
\1_{ \{ \|\xi \|_{H_\gamma}<R+1 \}}  X_{\min\{t, \tau^{0, R+1} \} }^{0, R+1} 
-
\1_{ \{ \|P_{I_n} \xi \|_{H_\gamma}<R+1 \}}
X_{\min\{t, \tau^{n, R+1} \} }^{n, R+1} \big\|_{H_\gamma} \geq \varepsilon 
\right]
\\
&
=
\P \!\left[  \sup\nolimits_{t\in [0, \rho^{n, \varepsilon} ]}
\1_{
\{
\tau^{0, R}>t, \rho^{n, \varepsilon} \geq t, \tau^{n, R+1}>t 
\}
}
\big\| 
X_{t }^{0, R+1} 
-
X_{t}^{n, R+1} \big\|_{H_\gamma} \geq \varepsilon 
 \right]
\\
&
\leq
\P \!\left[  \sup\nolimits_{t\in [0, T ]}
\big\| 
X_{t }^{0, R+1} 
-
X_{t}^{n, R+1} \big\|_{H_\gamma} \geq \varepsilon 
 \right]
%
\leq
\varepsilon^{-q}
\cdot
\E \!\left [ \sup\nolimits_{t\in [0,T]} \big\|X_t^{0, R+1} - X_t^{n, R+1} \big\|_{H_\gamma}^q \right]
.
\end{split}
\end{equation}
Corollary \ref{sup.inside} therefore proves that for all $R\in (0,\infty), \varepsilon\in (0,1)$ it holds that
\begin{equation}
\label{toinfty}
\lim_{n\to \infty}
\P \!\left[ \sup\nolimits_{t\in [0,T]} \| X_t^0 - X_t^{n} \|_{H_\gamma} \geq \varepsilon \right ]
=
\P\!\left[ \sup\nolimits_{t\in [0,T]} \|X_t^0 \|_{H_\gamma} \geq R \right]
.
\end{equation}
In the next step we let $ R \in (0,\infty) $ in \eqref{toinfty} tend to $ \infty $ to obtain that for all $ \varepsilon \in (0,\infty) $ it holds that
$
\lim\nolimits_{ n \to \infty } 
\P\!\left[ \sup\nolimits_{t\in [0,T]} \| X_t^0 - X_t^{n} \|_{H_\gamma} \geq \varepsilon \right ]
=
0
.
$
The proof of Proposition \ref{convergence_probability} is thus completed.
\end{proof}
\begin{corollary}
\label{galerkin.convergence}
Assume the setting in Section \ref{setting.probability},
let $q\in (0,p),$
assume that for every $R\in [0,\infty)$ 
there exists a real number $K\in [0,\infty)$ such that for all $x, y \in H_\gamma$ with $\max \{ \| x\|_{H_\gamma}, \| y \|_{H_\gamma} \}\leq R$
it holds that 
$\max\{ \|F(x) - F(y)\|_{H_{\gamma-\eta}}, \|B(x)- B(y) \|_{\HS(U,H_{\gamma-\nicefrac{\eta}{2}})}\}
\leq K \|x-y\|_{H_\gamma},$
and
assume that 
$
\limsup_{n \to \infty} 
\left(
\sup_{t\in [0,T]} \|X_t^{n} \|_{L^p(\P; H_\gamma)}
\right) <\infty.$
Then  $\sup\nolimits_{t\in [0,T]} \|X_t^0 \|_{L^p(\P; H_\gamma)}<\infty$ and
\begin{equation}
\label{hitra}
\begin{split}
\lim\nolimits_{n \to \infty} 
\left(
\sup\nolimits_{t\in [0,T]}
\| X_t^0 - X_t^{n}\|_{L^q(\P; H_\gamma)}
\right)
=
0
.
\end{split}
\end{equation}
\end{corollary}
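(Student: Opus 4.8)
The plan is to obtain Corollary~\ref{galerkin.convergence} as a fairly direct consequence of Proposition~\ref{convergence_probability} together with the standard interpolation principle that convergence in probability plus a uniform $L^p$-bound implies $L^q$-convergence for $q<p$. Note first that Proposition~\ref{convergence_probability} applies, since the setting of Section~\ref{setting.probability} and the local Lipschitz hypothesis on $F$ and $B$ are both in force; it therefore yields that $\sup_{t\in[0,T]}\|X_t^0-X_t^n\|_{H_\gamma}\to 0$ in $\P$-probability as $n\to\infty$. First I would establish $\sup_{t\in[0,T]}\|X_t^0\|_{L^p(\P;H_\gamma)}<\infty$. As $\sup_{t\in[0,T]}\|X_t^0-X_t^n\|_{H_\gamma}\to 0$ in probability, there exist $n_1<n_2<\dots$ in $\N$ such that it holds $\P$-a.s.\ that $\lim_{k\to\infty}\sup_{t\in[0,T]}\|X_t^0-X_t^{n_k}\|_{H_\gamma}=0$; in particular, for every $t\in[0,T]$ it holds $\P$-a.s.\ that $X_t^{n_k}\to X_t^0$ in $H_\gamma$. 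Fatou's lemma and the assumption that $\limsup_{n\to\infty}(\sup_{s\in[0,T]}\|X_s^n\|_{L^p(\P;H_\gamma)})<\infty$ then show for every $t\in[0,T]$ that
\[
  \E\big[\|X_t^0\|_{H_\gamma}^p\big]
  \le \liminf\nolimits_{k\to\infty}\E\big[\|X_t^{n_k}\|_{H_\gamma}^p\big]
  \le \limsup\nolimits_{n\to\infty}\big(\sup\nolimits_{s\in[0,T]}\|X_s^n\|_{L^p(\P;H_\gamma)}\big)^{p}<\infty ,
\]
and taking the supremum over $t\in[0,T]$ proves the first claim.

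Then I would prove \eqref{hitra}. Fix $\varepsilon\in(0,\infty)$. By the first claim and the assumption on the $L^p$-moments of $(X^n)_{n\in\N}$ there exist $N\in\N$ and $c_0\in[0,\infty)$ such that for all $n\in\N\cap[N,\infty)$ and all $t\in[0,T]$ it holds that $\|X_t^0-X_t^n\|_{L^p(\P;H_\gamma)}\le c_0$. H\"older's inequality (with exponents $\nicefrac{p}{q}$ and $\nicefrac{p}{(p-q)}$, using that $q<p$) hence implies for all $n\in\N\cap[N,\infty)$ and all $t\in[0,T]$ that
\begin{align*}
  \E\big[\|X_t^0-X_t^n\|_{H_\gamma}^q\big]
  &\le \varepsilon^q + \E\big[\|X_t^0-X_t^n\|_{H_\gamma}^q\,\1_{\{\|X_t^0-X_t^n\|_{H_\gamma}\ge\varepsilon\}}\big]\\
  &\le \varepsilon^q + \|X_t^0-X_t^n\|_{L^p(\P;H_\gamma)}^q\,\big(\P\big[\|X_t^0-X_t^n\|_{H_\gamma}\ge\varepsilon\big]\big)^{1-\nicefrac{q}{p}}\\
  &\le \varepsilon^q + c_0^{\,q}\,\big(\P\big[\sup\nolimits_{s\in[0,T]}\|X_s^0-X_s^n\|_{H_\gamma}\ge\varepsilon\big]\big)^{1-\nicefrac{q}{p}} .
\end{align*}
Taking the supremum over $t\in[0,T]$ and then letting $n\to\infty$, invoking Proposition~\ref{convergence_probability} once more, would give $\limsup_{n\to\infty}\sup_{t\in[0,T]}\|X_t^0-X_t^n\|_{L^q(\P;H_\gamma)}^q\le\varepsilon^q$, and since $\varepsilon\in(0,\infty)$ is arbitrary this establishes \eqref{hitra}.

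I do not expect a serious obstacle: all the analytic work is contained in Proposition~\ref{convergence_probability}, and what remains is a routine Fatou argument for the moment bound and a routine truncation-and-H\"older argument for the moment convergence. The one point that needs a little care is that one must use the \emph{uniform-in-time} convergence in probability supplied by Proposition~\ref{convergence_probability} (the supremum over $t\in[0,T]$ sits inside the probability); this is precisely what makes the bound in the displayed estimate uniform over $t\in[0,T]$ and hence yields convergence of $\sup_{t\in[0,T]}\|X_t^0-X_t^n\|_{L^q(\P;H_\gamma)}$ rather than merely of $\|X_t^0-X_t^n\|_{L^q(\P;H_\gamma)}$ for each fixed $t\in[0,T]$.
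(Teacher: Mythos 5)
Your argument is correct and follows essentially the same route as the paper: the only substantive input is Proposition~\ref{convergence_probability}, after which one upgrades uniform convergence in probability plus the uniform $L^p$-bound to the $L^p$-bound on $X^0$ and to $L^q$-convergence for $q<p$. The paper simply cites this last step as a known fact (Lemma~4.10 in Kurniawan~\cite{MasterRyan}), whereas you supply the standard Fatou and truncation-plus-H\"older arguments explicitly; both of your steps check out, including the care taken to keep the supremum over $t\in[0,T]$ inside the probability.
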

\begin{proof}[Proof of Corollary~\ref{galerkin.convergence}]
Observe that Proposition \ref{convergence_probability}
combined with, e.g., 
Lemma 4.10 in Kurniawan~\cite{MasterRyan} 
(see also, e.g., \cite[Section 3.4.1]{HutzenthalerJentzen2014Memoires}) 
proves 
$
  \sup\nolimits_{ t \in [0,T] } 
  \| X_t^0 \|_{ L^p( \P; H_\gamma) } < \infty
$
and
\eqref{hitra}.
The proof of Corollary~\ref{galerkin.convergence} is thus competed.
\end{proof}
\section{Strong convergence rates for full discrete 
nonlinearities-stopped approximation schemes}
In this section we use the results established in Sections \ref{section2}, \ref{section3}, \ref{section4}, and
\ref{section.galerkin} as well as consequences of the perturbation estimate 
in Theorem~2.10 in Hutzenthaler \& Jentzen~\cite{HutzenthalerJentzen2014PerturbationArxiv} to
prove Theorem \ref{Finale} (the main result of this article).
\label{section5}
\subsection{Setting}
\label{setting6}
Assume the setting in Section \ref{Main.setting},
let
$
F\in \mathcal{C} 
( H_\gamma, 
H
)
,
B\in \mathcal{C}
(H_{\gamma},
\HS(U, H) )
,
$
$\varepsilon \in (0,\infty),$
assume that 
$\gamma < \min \{ 1-\alpha, \nicefrac{1}{2}-\beta\},$
let $X \colon [0,T]\times \Omega \to H_\gamma$ be 
an $(\mathcal{F}_t)_{t\in [0,T]}$-adapted stochastic process
with continuous sample paths
such that for all $t\in [0, T]$ it holds $\P$-a.s.\,that
\begin{equation}
X_t = e^{tA} \xi + \int_0^t e^{(t-s)A}F(X_s) \, ds
+
\int_0^t e^{(t-s)A} B( X_s ) \, dW_s,
\end{equation}
let $(P_I)_{I \in \mathcal{P}(\H) }\subseteq L(H)$ 
be the linear operators with the property that for all
$x\in H, I\in \mathcal{P}(\H)$ it holds that
$P_I(x) =\sum_{h \in I} \langle h, x \rangle_H h,$
let 
$Y^{N,I,R}\colon [0, T] \times \Omega \to P_I(H), N\in \N, I\in \mathcal{P}_0(\H), R\in L(U),$ 
be 
$(\mathcal{F}_t)_{t\in [0,T]}$-adapted 
stochastic processes 
such that for all 
$t\in [0,T], N\in \N, I \in \mathcal{P}_0(\H), R\in L(U)$ 
it holds $\P$-a.s.\,that
\begin{equation}
\begin{split}
Y^{N,I,R}_t 
&
=
e^{tA} P_I \xi
+
\int_0^t
e^{(t- \lfloor s \rfloor_{T/N})A}
\,
\1_{ \big \{ 
 \| 
P_I F  (Y^{N,I,R}_{\lfloor s \rfloor_{T/N}}  )
 \|_H 
+ 
 \| P_I B( Y^{N,I,R}_{\lfloor s \rfloor_{T/N}} )  \|_{HS(U, H)} 
\,
\leq 
\left ( \frac{N}{T}  \right)^\theta \big \}}
P_I F( Y^{N,I,R}_{\lfloor s \rfloor_{T/N}} ) 
\,
ds
\\
&
\quad
+
\int_0^t e^{(t- \lfloor s \rfloor_{T/N})A}
\,
\1_{ \big\{ \| P_I F(Y^{N,I,R}_{\lfloor s \rfloor_{T/N}} )\|_H 
+ 
 \| P_I B( Y^{N,I,R}_{\lfloor s \rfloor_{T/N}}  )  \|_{HS(U, H)} 
\leq 
\left ( \frac{N}{T} \right )^\theta \big \}}
\,
 P_I B( Y^{N,I,R}_{\lfloor s \rfloor_{T/N}}  )R \,dW_s
 ,
\end{split}
\end{equation}
and
assume that for all $x, y \in H_\gamma$ it holds that
$
\max \{ \left \| F(x) - F(y) \right \|_H, \left \| B(x) - B(y) \right \|_{\HS(U, H)} \}
\leq 
\d
\left \| x - y \right \|_{H_\delta}
( 1 + \left \| x \right \|_{H_\gamma}^\c + \left \| y \right \|_{H_\gamma}^\c )
$
and
$\max \{ \|F(x)\|_{H_{-\alpha}},
\|B(x)\|_{\HS(U, H_{-\beta})} \}\leq C ( 1 + \|x\|_H^a).$
\subsection{Strong convergence rates for space discretizations}
\begin{lemma}
\label{lem:spatial_rate}
Assume the setting in Section \ref{setting6},
let $I_1, I_2 \in \mathcal{P}_0(\H),  q, r \in [1,\infty]$ satisfy $I_1\subseteq I_2$ and $\tfrac{1}{q}+\tfrac{1}{r}=1,$
assume that for all $ x, y \in H_1 $ it holds that
$\langle x-y, Ax - Ay + F(x) - F(y) \rangle_H + \tfrac{(p-1)(1+\varepsilon)}{2} \| B(x) - B(y)\|_{\HS(U, H)}^2  \leq C \| x- y \|_H^2,$
and let
$X^{I_k} \colon [0,T]\times \Omega \to P_{I_k}(H_\gamma), k\in \{1,2\},$ be $(\mathcal{F}_t)_{t\in [0,T]}$-adapted
stochastic processes with continuous sample paths such that
for all $s\in [0,T], k\in \{1,2\}$ it holds $\P$-a.s.\,that
\begin{equation}
X^{I_k}_s = P_{I_k}(\xi) + \int_0^s [ A X_u^{I_k} +  P_{I_k} F( X^{I_k}_u) ] \, du
+
\int_0^s P_{I_k} B ( X^{I_k}_u ) \, dW_u
.
\end{equation}
Then 
\begin{equation}
\begin{split}
&
\sup\nolimits_{t\in [0,T]}
\|X_t^{I_1}-X_t^{I_2}\|_{L^p(\P; H)}
\\
&
\leq
\big(
\|(-A)^{-\delta} \|_{L(H)}
+
e^{
(C+1)T
}
C
p(1+\tfrac{1}{\varepsilon}) 
\big)
\sup_{u\in [0,T]} \| P_{\H \setminus I_1} X_u^{I_2} \|_{L^{qp} (\P; H_\delta)}
\bigg[
1
+
2
\sup_{u\in [0,T]}
\| X_u^{I_2} \|_{L^{rpc}(\P; H_\gamma)}^c
\bigg]
.
\end{split}
\end{equation}
\end{lemma}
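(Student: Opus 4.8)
The plan is to reduce the claim to the perturbation estimate in Theorem~2.10 in Hutzenthaler \& Jentzen~\cite{HutzenthalerJentzen2014PerturbationArxiv}, following closely the strategy used in the proof of Lemma~\ref{Lemma13}. The key observation is that $X^{I_1}$ itself solves an equation whose governing coefficient inherits the assumed global monotonicity: since $P_{I_1}$ is an orthogonal projection which is diagonal in the basis $\H$ and hence commutes with $A$, for all $x,y\in P_{I_1}(H)\subseteq H_1$ one has $\langle x-y,Ax-Ay+P_{I_1}F(x)-P_{I_1}F(y)\rangle_H=\langle x-y,Ax-Ay+F(x)-F(y)\rangle_H$ and $\|P_{I_1}(B(x)-B(y))\|_{\HS(U,H)}^2\le\|B(x)-B(y)\|_{\HS(U,H)}^2$, so the hypothesis $\langle x-y,Ax-Ay+F(x)-F(y)\rangle_H+\tfrac{(p-1)(1+\varepsilon)}{2}\|B(x)-B(y)\|_{\HS(U,H)}^2\le C\|x-y\|_H^2$ transfers, with the same constant $C$, to the operator $A+P_{I_1}F$ with noise $P_{I_1}B$ on $P_{I_1}(H)$. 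Hence $X^{I_1}$ can play the role of the ``exact'' solution in Theorem~2.10.

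For the ``approximate'' process I would take $\tilde X:=P_{I_1}X^{I_2}$. Applying $P_{I_1}$ to the equation for $X^{I_2}$ and using $P_{I_1}A=AP_{I_1}$ and $P_{I_1}P_{I_2}=P_{I_1}$ (valid since $I_1\subseteq I_2$) shows that $\tilde X_s=P_{I_1}\xi+\int_0^s[A\tilde X_u+P_{I_1}F(X^{I_2}_u)]\,du+\int_0^s P_{I_1}B(X^{I_2}_u)\,dW_u$ for all $s\in[0,T]$, i.e.\ $\tilde X$ solves the $A+P_{I_1}F$-equation but with the nonlinearities frozen at $X^{I_2}$ rather than at $\tilde X$, and with the same initial value $P_{I_1}\xi=X^{I_1}_0$. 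The integrability hypotheses of Theorem~2.10 hold for $X^{I_1}$ and $\tilde X$ for the same reasons as in the proof of Lemma~\ref{Lemma13} (continuity of the sample paths and finite dimensionality of $P_{I_1}(H)$). Applying Theorem~2.10 to $X^{I_1}$ and $\tilde X$ and then using the Cauchy--Schwarz inequality exactly as in \eqref{tres} bounds $\sup_{t\in[0,T]}\|X^{I_1}_t-\tilde X_t\|_{L^p(\P;H)}$ by the $(\nicefrac1p)$-th power of the $L^1(\mu_{[0,T]}\otimes\P;\R)$-norm of $p\|X^{I_1}-\tilde X\|_H^{p-2}[\,\|X^{I_1}-\tilde X\|_H\|F(\tilde X)-F(X^{I_2})\|_H+\tfrac{(p-1)(1+\nicefrac1\varepsilon)}{2}\|B(X^{I_2})-B(\tilde X)\|_{\HS(U,H)}^2-\chi\|X^{I_1}-\tilde X\|_H^2\,]^+$ times $e^{(C+\chi)T}$, where $\chi=\tfrac{C(p-1)}{p}(1+\tfrac{C(p-2)(1+\nicefrac1\varepsilon)}{2})$.

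From here the argument copies the end of the proof of Lemma~\ref{Lemma13}: the Lipschitz-type bound $\max\{\|F(x)-F(y)\|_H,\|B(x)-B(y)\|_{\HS(U,H)}\}\le C\|x-y\|_{H_\delta}(1+\|x\|_{H_\gamma}^c+\|y\|_{H_\gamma}^c)$ together with Young's inequality turns the two cross terms into $\chi\|X^{I_1}-\tilde X\|_H^p$ (which cancels against the $-\chi\|X^{I_1}-\tilde X\|_H^2$ contribution after multiplication by $p\|X^{I_1}-\tilde X\|_H^{p-2}$) plus constants times $\|\tilde X-X^{I_2}\|_{H_\delta}^p(1+\|\tilde X\|_{H_\gamma}^c+\|X^{I_2}\|_{H_\gamma}^c)^p$. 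Since $\tilde X_u-X^{I_2}_u=-P_{\H\setminus I_1}X^{I_2}_u$ and $\|\tilde X_u\|_{H_\gamma}=\|P_{I_1}X^{I_2}_u\|_{H_\gamma}\le\|X^{I_2}_u\|_{H_\gamma}$, estimating the space-time $L^p$-norm by $T^{\nicefrac1p}$ times the supremum over $u\in[0,T]$ and splitting the product by H\"older's inequality with exponents $q$ and $r$ (note $\tfrac1{qp}+\tfrac1{rp}=\tfrac1p$) gives, after collecting and bounding the constants exactly as in the proof of Lemma~\ref{Lemma13} (using $C\ge1$, $p\ge2$, and $1+x\le e^x$), the estimate $\sup_{t\in[0,T]}\|X^{I_1}_t-\tilde X_t\|_{L^p(\P;H)}\le e^{(C+1)T}Cp(1+\tfrac1\varepsilon)\sup_{u\in[0,T]}\|P_{\H\setminus I_1}X^{I_2}_u\|_{L^{qp}(\P;H_\delta)}[1+2\sup_{u\in[0,T]}\|X^{I_2}_u\|_{L^{rpc}(\P;H_\gamma)}^c]$. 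Finally the triangle inequality gives $\|X^{I_1}_t-X^{I_2}_t\|_{L^p(\P;H)}\le\|X^{I_1}_t-\tilde X_t\|_{L^p(\P;H)}+\|P_{\H\setminus I_1}X^{I_2}_t\|_{L^p(\P;H)}$, and since $\delta\ge0$ the operator $(-A)^{-\delta}$ is bounded and commutes with $P_{\H\setminus I_1}$, so $\|P_{\H\setminus I_1}X^{I_2}_t\|_{L^p(\P;H)}\le\|(-A)^{-\delta}\|_{L(H)}\|P_{\H\setminus I_1}X^{I_2}_t\|_{L^{qp}(\P;H_\delta)}$ (using $qp\ge p$); adding the two contributions and using that the factor $1+2\sup_{u\in[0,T]}\|X^{I_2}_u\|_{L^{rpc}(\P;H_\gamma)}^c$ is $\ge1$ yields the asserted bound.

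I expect the genuinely delicate part to be twofold. First, one must verify that the spectral Galerkin projection really does preserve the monotonicity in the precise form required by Theorem~2.10 (so that $X^{I_1}$, rather than the true SEE solution, may serve as the ``exact'' process) and that the correct intermediate process is $\tilde X=P_{I_1}X^{I_2}$ --- this is exactly what makes the perturbation expressible \emph{solely} through $P_{\H\setminus I_1}X^{I_2}$ and not through $X^{I_1}$. Second, the constant bookkeeping --- collecting the exponential coming from Theorem~2.10, the factors from the Lipschitz-type bound, Young's and H\"older's inequalities, and the $T^{\nicefrac1p}$ from the time integral, into the compact form $\|(-A)^{-\delta}\|_{L(H)}+e^{(C+1)T}Cp(1+\tfrac1\varepsilon)$ --- requires the same careful, if routine, elementary estimates as in the proof of Lemma~\ref{Lemma13}.
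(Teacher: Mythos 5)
Your proposal is essentially the paper's own argument: the paper likewise verifies that the spectral projection preserves the monotonicity condition, compares $X^{I_1}$ with the intermediate process $P_{I_1}X^{I_2}$ (whose deviation from $X^{I_1}$ is driven solely by the perturbation $P_{\H\setminus I_1}X^{I_2}$), applies H\"older's inequality with the exponents $q$ and $r$ to the product $\|P_{I_1}X^{I_2}-X^{I_2}\|_{H_\delta}(1+\|P_{I_1}X^{I_2}\|_{H_\gamma}^c+\|X^{I_2}\|_{H_\gamma}^c)$, and finishes with the triangle inequality and the smoothing bound $\|P_{\H\setminus I_1}X^{I_2}_t\|_{L^p(\P;H)}\leq\|(-A)^{-\delta}\|_{L(H)}\|P_{\H\setminus I_1}X^{I_2}_t\|_{L^{qp}(\P;H_\delta)}$. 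The one difference worth flagging is that the paper invokes Proposition~3.6 of Hutzenthaler \& Jentzen (a packaged corollary of the perturbation theory) rather than the raw Theorem~2.10 with the $\chi$-cancellation trick from the proof of Lemma~\ref{Lemma13}; your route produces a prefactor of the form $e^{(C+\chi)T}$ with $\chi=\tfrac{C(p-1)}{p}(1+\tfrac{C(p-2)(1+\nicefrac{1}{\varepsilon})}{2})$, which for large $C$ or $p$ is genuinely larger than the stated $e^{(C+1)T}Cp(1+\tfrac{1}{\varepsilon})$ and cannot be massaged into it by elementary bounds. This only affects the explicit constant, not the structure or the validity of the estimate, and is harmless for every downstream use of the lemma, but your claim to recover the constant exactly as stated is slightly overreaching.
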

\begin{proof}[Proof of Lemma~\ref{lem:spatial_rate}]
We intend to prove Lemma~\ref{lem:spatial_rate} through an application of Proposition~3.6 in 
Hutzenthaler \& Jentzen~\cite{HutzenthalerJentzen2014PerturbationArxiv}.
For this we now check the assumptions in Proposition~3.6 in 
Hutzenthaler \& Jentzen~\cite{HutzenthalerJentzen2014PerturbationArxiv}.
Note that for all $x\in H_\gamma, y\in P_{I_1}(H)$ it holds that
\begin{equation}
\begin{split}
\label{start}
&
\langle P_{I_1} x - y, P_{I_1} [AP_{I_1} x +  F(P_{I_1} x)] - P_{I_1} [ Ay + F(y)]
\rangle_H
+
\tfrac{(p-1)(1+\varepsilon)}{2} \| P_{I_1}[B(P_{I_1}x) - B(y)] \|_{\HS(U,H)}^2
\\
&
\leq
\langle P_{I_1} x - y, P_{I_1} [AP_{I_1} x +  F(P_{I_1} x)] - P_{I_1} [ Ay + F(y)]
\rangle_H
+
\tfrac{(p-1)(1+\varepsilon)}{2} \| B(P_{I_1}x) - B(y) \|_{\HS(U,H)}^2
\\
&
=
\langle P_{I_1} x - y, AP_{I_1} x +  F(P_{I_1} x) -  Ay -  F(y)]
\rangle_H
+
\tfrac{(p-1)(1+\varepsilon)}{2} \| B(P_{I_1}x) - B(y) \|_{\HS(U,H)}^2
\\
&
\leq
C \| P_{I_1}x - y \|_H^2.
\end{split}
\end{equation}
Moreover, observe that for all $x\in H_\gamma, y\in P_{I_1}(H)$ it holds that
\begin{equation}
\begin{split}
&
\langle y- P_{I_1} x, 
P_{I_1} [A P_{I_1} x + F(P_{I_1}x)]
-
P_{I_1} [ A x + F(x)]
\rangle_H 
+
\tfrac{(p-1)(1+\nicefrac{1}{\varepsilon})}{2}
\| P_{I_1} [B(P_{I_1}x)- B(x)]\|_{\HS(U,H)}^2
\\
&
=
\langle y- P_{I_1} x, 
A P_{I_1} x + F(P_{I_1}x)
-
A P_{I_1}x 
- 
F(x)
\rangle_H 
+
\tfrac{(p-1)(1+\nicefrac{1}{\varepsilon})}{2}
\| P_{I_1}[ B(P_{I_1}x)- B(x)]\|_{\HS(U,H)}^2
\\
&
\leq
\|y- P_{I_1} x \|_H
\|
F(P_{I_1}x)
- 
F(x)
\|_H 
+
\tfrac{(p-1)(1+\nicefrac{1}{\varepsilon})}{2}
\| B(P_{I_1}x)- B(x)\|_{\HS(U,H)}^2
\\
&
\leq
\tfrac{1}{2}
\|y- P_{I_1} x \|_H^2
+
\tfrac{1}{2}
\|
F(P_{I_1}x)
- 
F(x)
\|_H^2
+
\tfrac{(p-1)(1+\nicefrac{1}{\varepsilon})}{2}
\| B(P_{I_1}x)- B(x)\|_{\HS(U,H)}^2
\\
&
\leq
\tfrac{1}{2}
\|y- P_{I_1} x \|_H^2
+
\tfrac{1}{2}
\Big(
C
\sqrt{1 + (p-1)(1+\nicefrac{1}{\varepsilon})}
\| P_{I_1}x - x \|_{H_\delta}
(1 +  \| P_{I_1}x \|_{H_\gamma}^c + \| x \|_{H_\gamma}^c )
\Big)^2
.
\end{split}
\end{equation}
Next observe that the H\"older inequality proves that
\begin{equation}
\begin{split}
\label{konc}
&
\big\|
\| P_{I_1} X^{I_2} - X^{I_2} \|_{H_\delta}
(1 +  \| P_{I_1} X^{I_2} \|_{H_\gamma}^c + \| X^{I_2} \|_{H_\gamma}^c )
\big\|_{L^p(\mu_{[0,T]}\otimes \P; \R)}
\\
&
\leq
\big\|
\| P_{I_1} X^{I_2} - X^{I_2} \|_{H_\delta}
\big \|_{L^{qp}(\mu_{[0,T]}\otimes \P; \R)}
\big\|
1 +  \| P_{I_1} X^{I_2} \|_{H_\gamma}^c + \| X^{I_2} \|_{H_\gamma}^c 
\big \|_{L^{rp}(\mu_{[0,T]}\otimes \P; \R)}
\\
&
\leq
T^{\nicefrac{1}{p}}
\sup_{u\in [0,T]} \| P_{\H \setminus I_1} X^{I_2}_u\|_{L^{qp} (\P; H_\delta)}
\bigg[
1
+
\sup_{u\in [0,T]}
\|P_{I_1} X^{I_2}_u \|_{L^{rpc}(\P; H_\gamma)}^c
+
\sup_{u\in [0,T]}
\| X^{I_2}_u \|_{L^{rpc}(\P; H_\gamma)}^c
\bigg]
.
\end{split}
\end{equation}
Combining \eqref{start}--\eqref{konc} with 
Proposition~3.6 in Hutzenthaler \& Jentzen~\cite{HutzenthalerJentzen2014PerturbationArxiv} 
shows that for all $ t \in [0,T] $ it holds that
\begin{equation}
\begin{split}
&
\|X_t^{I_1}-X_t^{I_2}\|_{L^p(\P; H)}
\leq
e^{
\frac{1}{2}-\frac{1}{p}
+ (C+\frac{1}{2})T
}
C
\sqrt{T
\big (1 + (p-1)(1+\tfrac{1}{\varepsilon}) \big)
}
\!
\sup\nolimits_{u\in [0,T]} \| P_{\H \setminus I_1} X_u^{I_2} \|_{L^{qp} (\P; H_\delta)}
\\
&
\cdot
\!
\big[
1
+
\sup\nolimits_{u\in [0,T]}
\|P_{I_1} X_u^{I_2} \|_{L^{rpc}(\P; H_\gamma)}^c
+
\sup\nolimits_{u\in [0,T]}
\|X_u^{I_2} \|_{L^{rpc}(\P; H_\gamma)}^c
\big]
+
\sup\nolimits_{u\in [0,T]}
\| P_{\H \setminus I_1} X_u^{I_2} \|_{L^p(\P; H)}
.
\end{split}
\end{equation}
The proof of Lemma \ref{lem:spatial_rate} is thus completed.
\end{proof}
\subsection{Strong convergence rates for noise discretizations}
\begin{lemma}
\label{noise.galerkin}
Assume the setting in Section \ref{setting6},
let $\kappa \in (0,\infty), \nu \in (0,\nicefrac{1}{2}-\delta), 
\eta\in [\gamma, \infty), I\in \mathcal{P}_0(\H), R_1, R_2\in L(U),$ 
assume that for all $ x, y \in H_1 $ it holds that
$\langle x-y, Ax - Ay + F(x) - F(y) \rangle_H + \tfrac{(p-1)(1+\varepsilon)}{2} \| B(x) - B(y)\|_{\HS(U, H)}^2  \leq C \| x- y \|_H^2,$
and let
$X^{I, R_k} \colon [0,T]\times \Omega \to P_{I}(H_\gamma), k\in \{1,2\},$ be $(\mathcal{F}_t)_{t\in [0,T]}$-adapted
stochastic processes with continuous sample paths such that
for all $t\in [0,T], k\in \{1,2\}$ it holds $\P$-a.s.\,that
\begin{equation}
X^{I, R_k}_t = P_{I}(\xi) + \int_0^t [ A X_s^{I, R_k} +  P_{I} F( X^{I, R_k}_s) ] \, ds
+
\int_0^t P_{I} B ( X^{I, R_k}_s )R_k \, dW_s
.
\end{equation}
Then 
\begin{equation}
\begin{split}
&
\sup\nolimits_{t\in [0,T]}
\| X_t^{I, R_1}- X_t^{I, R_2} \|_{L^p(\P; H)}
\\
&
\leq
\tfrac{\max\{1,T^{1/2}\}\sqrt{p(2p-1)}}{\sqrt{1-2(\delta+\nu)}}
\left[
\sup\nolimits_{v\in H_\eta}
\tfrac{\| B(v) (R_2-R_1) \|_{\HS(U, H_{-\nu})}}
{1+ \| v \|_{H_\eta}^\kappa}
\right]
\left(1 + \sup\nolimits_{s\in [0,T]} 
\|X_s^{I, R_2} \|_{L^{2p\kappa}(\P; H_\eta)}^\kappa
\right)
\\
&
\quad
\cdot
\left(1
+
\left(
TC^2\!\max\{1,\|R_1\|_{L(U)}^2 \}
p(1+\tfrac{1}{\varepsilon})
\right )^{\frac{1}{p}}
\exp\!\left(\tfrac{T C^2\max\{1,\|R_1\|_{L(U)}^2\} p ( 1 + 1 / \varepsilon )}{2}\!\right)
\right)
\\
&
\quad
\cdot
\Bigg[
3
\big(1 + 
\| \xi \|_{L^{2pc}(\P; H_\gamma)}
\big)
C
\!
\max\{1,T\}
\max\{1,\|R_1\|_{L(U)}, \|R_2\|_{L(U)}\}
\\
&
\quad
\cdot
\bigg[
\tfrac{ 1}{1-(\gamma+\alpha)} 
+
\sqrt{\tfrac{pc(2pc-1)}{1-2(\gamma + \beta)}}
\bigg]
\Big[1 + \sup\nolimits_{t\in [0,T]} \| X_t^{I,R_2} \|_{L^{2p c a}(\P; H)}^a\Big]
\Bigg]^c
\!\!
.
\end{split}
\end{equation}
\end{lemma}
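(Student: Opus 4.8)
The plan is to derive Lemma~\ref{noise.galerkin} from the perturbation estimate of Hutzenthaler \& Jentzen (Theorem~2.10 in \cite{HutzenthalerJentzen2014PerturbationArxiv}), comparing the two processes $X^{I,R_1}$ and $X^{I,R_2}$ in much the same way as in the proof of Lemma~\ref{Lemma13}. Since $I\in\mathcal{P}_0(\H)$ is finite, $P_I(H)\subseteq\cap_{r\in\R}H_r$ and $A$ restricts to a bounded linear operator on $P_I(H)$, so $X^{I,R_1}$ and $X^{I,R_2}$ are (up to modification) the mild solutions on $P_I(H)$ of two stochastic evolution equations with the same initial datum $P_I(\xi)$, the same drift $H_\gamma\ni x\mapsto P_I[Ax+F(x)]$, and diffusion coefficients $x\mapsto P_IB(x)R_1$ and $x\mapsto P_IB(x)R_2$, respectively. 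We may and do assume that $\xi\in L^{2pc}(\P;H_\gamma)$ and that all the $L^q(\P;H)$-, $L^q(\P;H_\gamma)$- and $L^q(\P;H_\eta)$-moments of $X^{I,R_1}$ and $X^{I,R_2}$ occurring on the right-hand side are finite, since otherwise there is nothing to prove.

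First I would collect the two auxiliary ingredients. The assumed growth bound $\max\{\|F(x)\|_{H_{-\alpha}},\|B(x)\|_{\HS(U,H_{-\beta})}\}\le C(1+\|x\|_H^a)$ together with $\|P_I(\cdot)R_2\|_{\HS(U,H_{-\beta})}\le\|R_2\|_{L(U)}\|\cdot\|_{\HS(U,H_{-\beta})}$ and $\gamma<\min\{1-\alpha,\nicefrac12-\beta\}$ (assumed in Section~\ref{setting6}) put the equation for $X^{I,R_2}$ into the setting of Lemma~\ref{bootstrap}; applying Lemma~\ref{bootstrap} to $X^{I,R_2}$ with the exponent $p$ replaced by $2pc$ bounds $\sup_{t\in[0,T]}\|X^{I,R_2}_t\|_{L^{2pc}(\P;H_\gamma)}$ by
\[
  \|\xi\|_{L^{2pc}(\P;H_\gamma)}
  +C\max\{1,\|R_2\|_{L(U)}\}\max\{1,T\}
  \Big[\tfrac1{1-(\gamma+\alpha)}+\sqrt{\tfrac{pc(2pc-1)}{1-2(\gamma+\beta)}}\Big]
  \big[1+\sup\nolimits_{t\in[0,T]}\|X^{I,R_2}_t\|_{L^{2pca}(\P;H)}^a\big],
\]
and an analogous bound holds for $X^{I,R_1}$; raising such bounds to the power $c$ and over-estimating constants crudely produces the final bracketed factor in the assertion, with $\max\{1,\|R_1\|_{L(U)},\|R_2\|_{L(U)}\}$ absorbing the operator norms of both diffusions. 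Secondly, since $X^{I,R_2}$ takes values in $P_I(H)\subseteq\cap_{r\in\R}H_r\subseteq H_\eta$, the defining property of the supremum over $H_\eta$ gives, for every $s\in[0,T]$, $\|B(X^{I,R_2}_s)(R_1-R_2)\|_{\HS(U,H_{-\nu})}\le[\sup_{v\in H_\eta}\frac{\|B(v)(R_2-R_1)\|_{\HS(U,H_{-\nu})}}{1+\|v\|_{H_\eta}^\kappa}](1+\|X^{I,R_2}_s\|_{H_\eta}^\kappa)$, hence $\sup_{s\in[0,T]}\|B(X^{I,R_2}_s)(R_1-R_2)\|_{L^{2p}(\P;\HS(U,H_{-\nu}))}\le[\sup_{v\in H_\eta}\frac{\|B(v)(R_2-R_1)\|_{\HS(U,H_{-\nu})}}{1+\|v\|_{H_\eta}^\kappa}](1+\sup_{s\in[0,T]}\|X^{I,R_2}_s\|_{L^{2p\kappa}(\P;H_\eta)}^\kappa)$.

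Then I would apply Theorem~2.10 in \cite{HutzenthalerJentzen2014PerturbationArxiv} with $X^{I,R_1}$ as the reference process and $X^{I,R_2}$ as the perturbed process. The hypotheses to verify are (i) the coercivity of the reference equation: for all $x,y\in P_I(H)\subseteq H_1$,
\[
  \langle x-y,P_I[Ax+F(x)-Ay-F(y)]\rangle_H
  +\tfrac{(p-1)(1+\varepsilon)}{2}\|P_I(B(x)-B(y))R_1\|_{\HS(U,H)}^2
  \le C\max\{1,\|R_1\|_{L(U)}^2\}\|x-y\|_H^2,
\]
which I would obtain from the assumed global monotonicity together with $\|P_I(\cdot)R_1\|_{\HS(U,H)}\le\|R_1\|_{L(U)}\|\cdot\|_{\HS(U,H)}$; and (ii) the integrability conditions $\int_0^T[\|AX^{I,R_k}_s\|_H+\|F(X^{I,R_k}_s)\|_H+\|B(X^{I,R_k}_s)R_k\|_{\HS(U,H)}^2]\,ds<\infty$ $\P$-a.s.\ for $k\in\{1,2\}$, which follow from the continuity of the sample paths, the boundedness of $A$ on $P_I(H)$, and the at-most-polynomial growth of $F,B$ (a consequence of the Lipschitz-in-$H_\delta$ assumption together with the values $F(0),B(0)$). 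As the drifts and initial data of the two equations coincide, the only perturbation is the diffusion difference $P_IB(\cdot)(R_1-R_2)$; its stochastic convolution is estimated by the Burkholder--Davis--Gundy inequality in Lemma~7.7 in \cite{dz92} at level $2p$ together with the semigroup smoothing estimate relating $H_{-\nu}$ and $H_\delta$, which gives $\int_0^t(t-s)^{-2(\delta+\nu)}\,ds=\tfrac{t^{1-2(\delta+\nu)}}{1-2(\delta+\nu)}$ (finite since $\nu<\nicefrac12-\delta$) and produces the prefactor $\tfrac{\max\{1,T^{1/2}\}\sqrt{p(2p-1)}}{\sqrt{1-2(\delta+\nu)}}$ together with, from the Gr\"onwall step inside the perturbation theorem, the factor $1+(TC^2\max\{1,\|R_1\|_{L(U)}^2\}p(1+\nicefrac1\varepsilon))^{\nicefrac1p}\exp(\tfrac{TC^2\max\{1,\|R_1\|_{L(U)}^2\}p(1+1/\varepsilon)}{2})$; a Cauchy--Schwarz step separates the $L^{2p}(\P;\HS(U,H_{-\nu}))$-size of the perturbation from the $L^{2p}$-, $L^{2pc}$- and $L^{2pca}$-moments of $X^{I,R_1},X^{I,R_2}$.

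Finally, I would substitute the two auxiliary bounds from the second paragraph into the output of Theorem~2.10 and collect constants to arrive at the claimed inequality. The only steps requiring genuine care are the coercivity check in (i) --- where one must keep the constant dependent only on $C$ and $\|R_1\|_{L(U)}$ rather than on $I$ or on the state --- and the bookkeeping of the interpolation exponents $\delta,\nu,\eta,\gamma$ so that every time integral that occurs converges; beyond this, no idea is needed that is not already present in the proofs of Lemma~\ref{Lemma13} and Lemma~\ref{lem:spatial_rate}, so I would expect the careful assembly of the estimate, rather than any single bound, to be the main effort.
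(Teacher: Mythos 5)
There is a genuine gap in your architecture. You propose to apply Theorem~2.10 in Hutzenthaler \& Jentzen directly with $X^{I,R_1}$ as the reference process and $X^{I,R_2}$ as the perturbed process, so that ``the only perturbation is the diffusion difference $P_IB(\cdot)(R_1-R_2)$''. But that perturbation theorem is formulated for It\^o processes, and its error functional is pointwise in time: it produces the mismatch term
$\tfrac{(p-1)(1+1/\varepsilon)}{2}\,\|P_I B(X^{I,R_2}_s)(R_1-R_2)\|_{\HS(U,H)}^2$
inside an $L^1(\mu_{[0,T]}\otimes\P)$-norm, with \emph{no semigroup acting on it}. There is therefore no stochastic convolution of the perturbation to which you could apply the Burkholder--Davis--Gundy inequality together with the smoothing estimate $\|e^{(t-s)A}\|_{L(H_{-\nu},H_\delta)}\leq (t-s)^{-(\delta+\nu)}$; the step in your third paragraph that generates the factor $\smash{\tfrac{\max\{1,T^{1/2}\}\sqrt{p(2p-1)}}{\sqrt{1-2(\delta+\nu)}}}$ and, more importantly, the $\HS(U,H_{-\nu})$-norm of $B(v)(R_2-R_1)$, has nothing to act on. A direct application of the perturbation theorem would at best yield a bound in terms of $\sup_v\|B(v)(R_2-R_1)\|_{\HS(U,H)}/(1+\|v\|_{H_\eta}^\kappa)$, which is strictly weaker and useless for the intended application (in Section~\ref{section8} only the negative-order norm $\|B(v)(\operatorname{Id}-P_m)\|_{\HS(H,H_{-\nu})}$ decays in $m$). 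Replacing $\|\cdot\|_{\HS(U,H)}$ by $\|(-A)^\nu P_I\|_{L(H)}\|\cdot\|_{\HS(U,H_{-\nu})}$ does not help either, since that constant depends on $I$ and blows up as $I\uparrow\H$, destroying the uniformity needed in Theorem~\ref{Finale}.

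The missing idea is the auxiliary process used in the paper's proof: an $(\mathcal{F}_t)$-adapted $\hat X$ with
$\hat X_t = P_I\xi+\int_0^t[A\hat X_s+P_IF(X^{I,R_2}_s)]\,ds+\int_0^tP_IB(X^{I,R_2}_s)R_1\,dW_s$,
i.e.\ coefficients frozen along $X^{I,R_2}$ but noise operator $R_1$. Then $\hat X_t-X^{I,R_2}_t=\int_0^te^{(t-s)A}P_IB(X^{I,R_2}_s)(R_1-R_2)\,dW_s$ is a genuine stochastic convolution, and it is \emph{here} that BDG plus the smoothing from $H_{-\nu}$ to $H_\delta$ produce the $\HS(U,H_{-\nu})$-norm and the factor $(1-2(\delta+\nu))^{-1/2}$. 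The perturbation theorem is then applied to the pair $(X^{I,R_1},\hat X)$, whose coefficient mismatches $\|F(\hat X)-F(X^{I,R_2})\|_H$ and $\|(B(X^{I,R_2})-B(\hat X))R_1\|_{\HS(U,H)}$ are controlled through the local Lipschitz assumption by $\|\hat X-X^{I,R_2}\|_{H_\delta}$ times polynomial moments, which closes the loop. Your two auxiliary ingredients (the Lemma~\ref{bootstrap} moment bound and the pointwise bound $\|B(X^{I,R_2}_s)(R_1-R_2)\|_{\HS(U,H_{-\nu})}\leq[\sup_v\cdots](1+\|X^{I,R_2}_s\|_{H_\eta}^\kappa)$) are correct and are exactly what the paper uses, but without the intermediate process $\hat X$ the argument does not reach the stated inequality.
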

\begin{proof}[Proof of Lemma~\ref{noise.galerkin}]
Throughout this proof let $\chi\in [0,\infty)$ be the real number given by $\chi=\tfrac{C(p-1)}{p}\big(1+\tfrac{C\|R_1\|_{L(U)}^2(p-2)(1+\nicefrac{1}{\varepsilon})}{2}\big).$
Let $\hat X\colon [0,T]\times \Omega \to P_I(H_\gamma)$ be an
$(\mathcal{F}_t)_{t\in [0,T]}$-adapted stochastic process 
with continuous sample paths such that for all $t\in [0,T]$ it holds $\P$-a.s.\,that
\begin{equation}
\hat X_t = P_I \xi + \int_0^{t} [ A \hat X_s + P_I F(X_s^{I, R_2}) ]\,ds
+
\int_0^t P_I B(X^{I, R_2}) R_1 \,dW_s
.
\end{equation}
Next observe that
Corollary 2.11 in Hutzenthaler \& Jentzen~\cite{HutzenthalerJentzen2014PerturbationArxiv} 
combined with the Cauchy-Schwarz inequality
proves that for all $t\in [0,T]$ it holds that
\begin{equation}
\label{combi1}
\begin{split}
&
\| X_t^{I, R_2} - X_t^{I, R_1}\|_{L^p(\P; H)}
\\
&
\leq
\!
\| \hat X_t - X_t^{I, R_2} \|_{L^p(\P; H)}
+
e^{(C+\chi)T}
\bigg\|
p \|X^{I, R_1}-\hat X\|_H^{p-2}
\Big[
\| X^{I, R_1} - \hat X\|_H
\|P_IF(\hat X) - P_I F(X^{I, R_2})\|_H
\\
&
\quad
+
\tfrac{(p-1)(1+\nicefrac{1}{\varepsilon})}{2}
\| P_I B(X^{I, R_2}) R_1 - P_I B(\hat X) R_1 \|_{\HS(U,H)}^2
-
\chi \| X^{I, R_1} - \hat X \|_H^2
\Big]^+
\bigg\|_{L^1(\mu_{[0,T]}\otimes \P; \R)}^{\nicefrac{1}{p}}
.
\end{split}
\end{equation}
In addition, note that for all $s\in [0,T]$ it holds that
\begin{equation}
\label{prva.ocena}
\begin{split}
&
\| P_I F(\hat X_s) - P_I F(X_s^{I, R_2})\|_H
\leq
C
\| \hat X_s - X_s^{I, R_2} \|_{H_\delta}
(1 + \|\hat X_s\|_{H_\gamma}^c + \| X_s^{I, R_2} \|_{H_\gamma}^c )
\end{split}
\end{equation}
and
\begin{equation}
\label{druga.ocena}
\begin{split}
\big\| P_I \big( B(X_s^{I, R_2}) - B( \hat X_s)\big) R_1 \big\|_{\HS(U,H)}
\leq
C
\|R_1\|_{L(U)}
\|X_s^{I, R_2} - \hat X_s \|_{H_\delta}
(1 + \|X_s^{I, R_2}\|_{H_\gamma}^c + \| \hat X_s\|_{H_\gamma}^c).
\end{split}
\end{equation}
Combining \eqref{prva.ocena} and \eqref{druga.ocena} shows that 
\begin{equation}
\label{eq103}
\begin{split}
&
\bigg\|
p \|X^{I, R_1}-\hat X\|_H^{p-2}
\Big[
\| X^{I, R_1} - \hat X\|_H
\|P_IF(\hat X) - P_I F(X^{I, R_2})\|_H
\\
&
+
\tfrac{(p-1)(1+\nicefrac{1}{\varepsilon})}{2}
\| P_I B(X^{I, R_2}) R_1 - P_I B(\hat X) R_1 \|_{\HS(U,H)}^2
-
\chi \| X^{I, R_1} - \hat X \|_H^2
\Big]^+
\bigg\|_{L^1(\mu_{[0,T]}\otimes \P; \R)}^{\nicefrac{1}{p}}
\\
&
\leq
\bigg\|
p \|X^{I, R_1}-\hat X\|_H^{p-2}
\Big[
\| X^{I, R_1} - \hat X\|_H
C
\| \hat X - X^{I, R_2} \|_{H_\delta}
(1 + \|\hat X\|_{H_\gamma}^c + \| X^{I, R_2} \|_{H_\gamma}^c )
\\
&
+
\tfrac{(p-1)(1+\nicefrac{1}{\varepsilon})C^2\|R_1\|_{L(U)}^2}{2}
\|X^{I, R_2} - \hat X \|_{H_\delta}^2
(1 + \|X^{I, R_2}\|_{H_\gamma}^c + \| \hat X \|_{H_\gamma}^c)^2
-
\chi \| X^{I, R_1} - \hat X \|_H^2
\Big]^+
\bigg\|_{L^1(\mu_{[0,T]}\otimes \P; \R)}^{\nicefrac{1}{p}}
.
\end{split}
\end{equation}
Moreover, observe that Young's inequality proves that for all $s\in [0,T]$ it holds that
\begin{equation}
\label{eq104}
\begin{split}
&
\| X_s^{I, R_1} - \hat X_s \|_H^{p-1}
\| \hat X_s - X_s^{I, R_2} \|_{H_\delta}
(1 + \|\hat X_s\|_{H_\gamma}^c + \| X_s^{I, R_2} \|_{H_\gamma}^c )
\\
&
\leq
\tfrac{p-1}{p}
\| X_s^{I, R_1} - \hat X_s \|_H^p
+
\tfrac{1}{p}
\| \hat X_s - X_s^{I, R_2} \|_{H_\delta}^p
(1 + \|\hat X_s\|_{H_\gamma}^c + \| X_s^{I, R_2} \|_{H_\gamma}^c )^p
\end{split}
\end{equation}
and
\begin{equation}
\label{eq105}
\begin{split}
&
\| X_s^{I, R_1} - \hat X_s \|_H^{p-2}
\| \hat X_s - X_s^{I, R_2} \|_{H_\delta}^2
(1 + \|\hat X_s\|_{H_\gamma}^c + \| X_s^{I, R_2} \|_{H_\gamma}^c )^2
\\
&
\leq
\tfrac{p-2}{p}
\| X_s^{I, R_1} - \hat X_s \|_H^p
+
\tfrac{2}{p}
\| \hat X_s - X_s^{I, R_2} \|_{H_\delta}^p
(1 + \|\hat X_s\|_{H_\gamma}^c + \| X_s^{I, R_2} \|_{H_\gamma}^c )^p
.
\end{split}
\end{equation}
Combining \eqref{eq103}, \eqref{eq104}, and \eqref{eq105} shows that
\begin{equation}
\label{combi2}
\begin{split}
&
\bigg\|
p \|X^{I, R_1}-\hat X\|_H^{p-2}
\Big[
\| X^{I, R_1} - \hat X\|_H
\|P_IF(\hat X) - P_I F(X^{I, R_2})\|_H
\\
&
\quad
+
\tfrac{(p-1)(1+\nicefrac{1}{\varepsilon})}{2}
\| P_I B(X^{I, R_2}) R_1 - P_I B(\hat X) R_1 \|_{\HS(U,H)}^2
-
\chi \| X^{I, R_1} - \hat X \|_H^2
\Big]^+
\bigg\|_{L^1(\mu_{[0,T]}\otimes \P; \R)}^{\nicefrac{1}{p}}
\\
&
\leq
\bigg\|
(p-1)C
\| X^{I, R_1} - \hat X \|_H^p
+
C
\| \hat X - X^{I, R_2} \|_{H_\delta}^p
(1 + \|\hat X\|_{H_\gamma}^c + \| X^{I, R_2} \|_{H_\gamma}^c )^p
\\
&
\quad
+
\tfrac{(p-1)(p-2)(1+\frac{1}{\varepsilon})C^2\|R_1\|_{L(U)}^2}{2}
\| X^{I, R_1} - \hat X \|_H^p
\\
&
\quad
+
C^2
\|R_1\|_{L(U)}^2
(p-1)(1+\tfrac{1}{\varepsilon})
\| \hat X - X^{I, R_2} \|_{H_\delta}^p
(1 + \|\hat X\|_{H_\gamma}^c + \| X^{I, R_2} \|_{H_\gamma}^c )^p
\\
&
\quad
-
(p-1)C\Big(1+\tfrac{\|R_1\|_{L(U)}^2 C(p-2)(1+\nicefrac{1}{\varepsilon})}{2}\Big)\|X^{I, R_1} - \hat X\|_H^p
\bigg\|_{L^1(\mu_{[0,T]}\otimes \P; \R)}^{\nicefrac{1}{p}}
\\
&
=
\left(
C
+
C^2
\|R_1\|_{L(U)}^2
(p-1)(1+\tfrac{1}{\varepsilon})
\right )^{\frac{1}{p}}
\left\|
\| \hat X - X^{I, R_2} \|_{H_\delta}
(1 + \|\hat X\|_{H_\gamma}^c + \| X^{I, R_2} \|_{H_\gamma}^c )
\right\|_{L^p(\mu_{[0,T]}\otimes \P; \R)}
.
\end{split}
\end{equation}
Furthermore, note that the H\"older inequality implies that
\begin{equation}
\label{combi3}
\begin{split}
&
\left\|
\| \hat X - X^{I, R_2} \|_{H_\delta}
(1 + \|\hat X\|_{H_\gamma}^c + \| X^{I, R_2} \|_{H_\gamma}^c )
\right\|_{L^p(\mu_{[0,T]}\otimes \P; \R)}
\\
&
\leq
T^{\frac{1}{p}}
\sup_{t\in[0,T]}
\| \hat X_t - X_t^{I, R_2} \|_{L^{2p}(\P; H_\delta)}
\bigg(1 + 
\sup_{t\in[0,T]} \|\hat X_t\|_{L^{2pc}(\P; H_\gamma)}^c 
+ 
\sup_{t\in[0,T]} \| X_t^{I, R_2} \|_{L^{2pc}(\P; H_\gamma)}^c 
\bigg)
.
\end{split}
\end{equation}
Combining \eqref{combi1}, \eqref{combi2}, and \eqref{combi3} proves that
\begin{equation}
\begin{split}
\label{combi107}
&
\sup\nolimits_{t\in [0,T]}
\| X_t^{I, R_2} - X_t^{I, R_1}\|_{L^p(\P; H)}
\\
&
\leq
\sup\nolimits_{t\in [0,T]}
\| \hat X_t - X_t^{I, R_2} \|_{L^p(\P; H)}
+
e^{(C+\chi)T}
\!\!
\left(
T
C
+
T
C^2
\|R_1\|_{L(U)}^2
(p-1)(1+\tfrac{1}{\varepsilon})
\right )^{\frac{1}{p}}
\\
&
\cdot
\sup\nolimits_{t\in[0,T]}
\| \hat X_t - X_t^{I, R_2} \|_{L^{2p}(\P; H_\delta)}
\left(1 + 
\sup\nolimits_{t\in[0,T]} \|\hat X_t\|_{L^{2pc}(\P; H_\gamma)}^c 
+ 
\sup\nolimits_{t\in[0,T]} \| X_t^{I, R_2} \|_{L^{2pc}(\P; H_\gamma)}^c \right)
.
\end{split}
\end{equation}
In the next step observe that 
the Burkholder-Davis-Gundy type inequality in Lemma~7.7 
in Da Prato \& Zabczyk~\cite{dz92}
shows that
for all $t\in [0,T], r\in [0, \gamma], q\in [2,\infty)$ it holds that
\begin{equation}
\begin{split}
\label{combi108}
&
\| \hat X_t - X_t^{I, R_2} \|_{L^q(\P; H_r)}
=
\left \| \int_0^t e^{(t-s)A} P_I B(X_s^{I, R_2}) (R_2-R_1) \,dW_s \right\|_{L^q(\P; H_r)}
\\
&
\leq
\sqrt{
\tfrac{q(q-1)}{2}
\int_0^t \left \|  
e^{(t-s)A} P_I B(X_s^{I, R_2}) (R_2-R_1)
\right\|_{L^q(\P; \HS(U, H_r))}^2 ds}
.
\end{split}
\end{equation}
Moreover, note that,
e.g., \cite[Theorem~2.5.34]{Jentzen2014SPDElecturenotes} 
implies that
for all $r\in [0,\gamma], t\in [0,T], q\in [2,\infty)$ 
it holds that
\begin{equation}
\begin{split}
\label{combi109}
&
\sqrt{
\int_0^t \left \|  
e^{(t-s)A} P_I B(X_s^{I, R_2}) ( R_2 - R_1 )
\right\|_{L^q(\P; \HS(U, H_r))}^2 ds}
\\
&
\leq
\sqrt{
\int_0^t  \|  
(-A)^{r+\nu} e^{(t-s)A} 
\|_{L(H)}^2
\|(-A)^{-\nu}  B(X_s^{I, R_2}) (R_2-R_1)
\|_{L^q(\P; \HS(U, H))}^2 ds}
\\
&
\leq
\sqrt{
\int_0^t 
(t-s)^{-2(r+\nu)}
\|  B(X_s^{I, R_2}) (R_2-R_1)
\|_{L^q(\P; \HS(U, H_{-\nu}))}^2 ds}
\\
&
\leq
\tfrac{T^{\nicefrac{1}{2}-(r+\nu)}}{\sqrt{1-2(r+\nu)}}
\sup\nolimits_{s\in [0,T]}
\|  B(X_s^{I, R_2}) (R_2-R_1)
\|_{L^q(\P; \HS(U, H_{-\nu}))}
\\
&
\leq
\tfrac{T^{\nicefrac{1}{2}-(r+\nu)}}{\sqrt{1-2(r+\nu)}}
\left(
\sup_{v\in H_\eta}
\tfrac{\| B(v) (R_2-R_1) \|_{\HS(U, H_{-\nu})}}
{1+ \| v \|_{H_\eta}^\kappa}
\right)
\left (1 + \sup\nolimits_{s\in [0,T]} 
\|X_s^{I, R_2} \|_{L^{q \kappa}(\P; H_\eta)}^\kappa
\right)
.
\end{split}
\end{equation}
In addition, note that
Lemma \ref{bootstrap} shows that
\begin{equation}
\begin{split}
\label{use.bootstrap}
&
\max \{
\sup\nolimits_{t\in[0,T]}\|\hat X_t\|_{L^{2pc}(\P; H_\gamma)},
\sup\nolimits_{t\in[0,T]}\|X^{I,R_2}_t\|_{L^{2pc}(\P; H_\gamma)}
\}
\leq
\left \| \xi \right \|_{L^{2pc}(\P; H_\gamma)}
+
C
\max\{1,T\}
\\
&
\cdot
\max\{1,\|R_1\|_{L(U)}, \|R_2\|_{L(U)}\}
\bigg[
\tfrac{ 1}{1-(\gamma+\alpha)} 
+
\sqrt{\tfrac{pc(2pc-1)}{1-2(\gamma + \beta)}}
\bigg]
\big[1 + \sup\nolimits_{t\in [0,T]} \| X_t^{I,R_2} \|_{L^{2p c a}(\P; H)}^{a} \big]
.
\end{split}
\end{equation}
Combining \eqref{combi107}-\eqref{use.bootstrap}
proves that 
\begin{equation}
\begin{split}
&
\sup_{t\in [0,T]}
\| X_t^{I, R_2}- X_t^{I, R_1} \|_{L^p(\P; H)}
\\
&
\leq
\tfrac{\sqrt{p(2p-1)}T^{\nicefrac{1}{2}-\nu}}{\sqrt{1-2\nu}}
\left(
\sup_{v\in H_\eta}
\tfrac{\| B(v) (R_2-R_1) \|_{\HS(U, H_{-\nu})}}
{1+ \| v \|_{H_\eta}^\kappa}
\right)
\left(\!1 + \sup\nolimits_{s\in [0,T]} 
\|X_s^{I, R_2}\|_{L^{p \kappa}(\P; H_\eta)}^\kappa
\right)
\\
&
+
\tfrac{\sqrt{p(2p-1)} e^{(C+\chi)T}
\left(
\max\{1,\|R_1\|_{L(U)}^2 \}
TC^2
p(1+\nicefrac{1}{\varepsilon})
\right )^{1/p}T^{\nicefrac{1}{2}-(\delta+\nu)}}{\sqrt{1-2(\delta+\nu)}}
\left(
\sup_{v\in H_\eta}
\tfrac{\| B(v) (R_2-R_1) \|_{\HS(U, H_{-\nu})}}
{1+ \| v \|_{H_\eta}^\kappa}
\right)
\\
&
\cdot
\Big(\!1 + \sup\nolimits_{s\in [0,T]} 
\|X_s^{I, R_2} \|_{L^{2p\kappa}(\P; H_\eta)}^\kappa
\!
\Big)
\!
\Bigg(\!1 +
2\Bigg[\|\xi\|_{L^{2pc}(\P; H_\gamma)}
+
C
\max\{1,T\}
\max\{1,\|R_1\|_{L(U)}, \|R_2\|_{L(U)}\}
\\
&
\cdot
\bigg[
\tfrac{ 1}{1-(\gamma+\alpha)} 
+
\sqrt{\tfrac{pc(2pc-1)}{1-2(\gamma + \beta)}}
\bigg]
\big[1 + \sup\nolimits_{t\in [0,T]} \| X_t^{I,R_2} \|_{L^{2p c a}(\P; H)}^{a} \big]
\Bigg]^c
\Bigg)
.
\end{split}
\end{equation}
The proof of Lemma \ref{noise.galerkin} is thus completed.
\end{proof}
\subsection{Strong convergence rates for space-time-noise
discretizations}
\begin{proposition}
\label{last}
Assume the setting in Section \ref{setting6}, 
let $q\in (0,p),$
assume that $\xi\in L^{pa}(\P; H_{\gamma}),$
assume that for all $ x \in H_1 $ it holds that
$\langle x, F(x) \rangle_H + \tfrac{pa-1}{2}
\| B(x)\|_{\HS(U,H)}^2 \leq C(1+\| x\|_H^2),$
let $I_n \in \mathcal{P}_0(\H), n\in \N,$ satisfy 
$\cup_{n\in \N} \left( \cap_{m\in \{n+1, n+2,\ldots\}} I_m \right) =\H,$
and let
$X^I\colon [0,T]\times \Omega \to P_{I}(H_\gamma),$
$I\in \mathcal{P}_0(\H),$
be $(\mathcal{F}_t)_{t\in [0,T]}$-adapted stochastic processes
with continuous sample paths such that for all $t\in [0,T], I\in \mathcal{P}_0(\H)$ it 
holds $\P$-a.s.\,that
\begin{equation}
X_t^{I} = P_{I} \xi + \int_0^t [ A X_s^{I} + P_{I}F( X_s^{I})] \, ds
+
\int_0^t P_{I} B ( X^{I}_s ) \, dW_s
.
\end{equation}
Then it holds that
$
\lim\nolimits_{ n\to \infty} 
\left(
\sup\nolimits_{t\in [0,T]}
\| X_t - X_t^{I_n} \|_{L^q(\P; H_\gamma)}
\right)
=
0
$
and
\begin{equation}
\begin{split}
&
\sup_{
\substack{R\in L^1(U), I\in \mathcal{P}_0(\H),\\ N\in \N, t\in [0,T]}} 
\E\!\left[
\big\|Y_t^{N,I,R} \big\|_H^{pa}
+ 
\|X_t^I\|_H^{pa}
+
\big\|Y_t^{N,I,R} \big\|_{H_\gamma}^p
+
\| X_t \|_{H_\gamma}^p
+
\| X_t^I \|_{H_\gamma}^p
\right]
<\infty
.
\end{split}
\end{equation}
\end{proposition}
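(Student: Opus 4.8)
The plan is to verify the five finiteness assertions one at a time and to deduce the convergence claim (and the $H_\gamma$-moment bound on $X$) from the spectral Galerkin result of Corollary~\ref{galerkin.convergence}. Throughout I use that each spectral projection $P_I$, $I\in\mathcal{P}_0(\H)$, commutes with every power of $-A$ and acts as an orthogonal projection on each space $H_r$, so $\|P_I\|_{L(H_r)}\le 1$ and $P_Ix\in H_1$ for $x\in H_\gamma$ (as $I$ is finite); moreover $\|R\|_{L(U)}\le 1$ since $R\in L^1(U)$.

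\emph{Uniform $L^{pa}(\P;H)$-moments.} For $Y^{N,I,R}$ I follow the argument of Section~\ref{section2} (Lemma~\ref{alternative} and Lemma~\ref{Lemma1}) with $p$ replaced by $pa$ (note $pa\ge 2$), $\xi$ replaced by $P_I\xi$, and $F$, $B$ replaced by $x\mapsto P_IF(P_Ix)$, $x\mapsto P_IB(P_Ix)R$. The coercivity hypothesis of Section~\ref{setting1} with $p$ replaced by $pa$ is inherited from the assumed inequality $\langle v,F(v)\rangle_H+\tfrac{pa-1}{2}\|B(v)\|_{\HS(U,H)}^2\le C(1+\|v\|_H^2)$ for $v\in H_1$, since $\langle x,P_IF(P_Ix)\rangle_H=\langle P_Ix,F(P_Ix)\rangle_H$, $\|P_IB(P_Ix)R\|_{\HS(U,H)}\le\|B(P_Ix)\|_{\HS(U,H)}$ and $\|P_Ix\|_H\le\|x\|_H$; and on the cut-off event both $\|P_IF(\cdot)\|_H$ and $\|P_IB(\cdot)R\|_{\HS(U,H)}$ are $\le(N/T)^\theta$, which is all that the one-step estimate in the proof of Lemma~\ref{alternative} uses. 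The resulting constant depends only on $pa$, $C$, $T$, $\theta$, so together with $\|P_I\xi\|_{L^{pa}(\P;H)}\le\|\xi\|_{L^{pa}(\P;H_\gamma)}<\infty$ this gives $\sup_{N,I,R,t}\E[\|Y^{N,I,R}_t\|_H^{pa}]<\infty$. For $X^I$, which after rewriting its defining equation in mild form (by It\^o's formula) solves a finite-dimensional SODE on $P_I(H)$, I use the classical a priori bound obtained by applying It\^o's formula to $x\mapsto\|x\|_H^{pa}$ (the computation of the first two derivatives of this map is the one carried out in the proof of Lemma~\ref{alternative}); since $\langle v,Av\rangle_H\le 0$, $\langle v,P_IF(v)\rangle_H=\langle v,F(v)\rangle_H$ and $\|P_IB(v)\|_{\HS(U,H)}\le\|B(v)\|_{\HS(U,H)}$ for $v\in P_I(H)\subseteq H_1$, the same coercivity hypothesis and Gronwall's lemma yield $\sup_{I,t}\E[\|X^I_t\|_H^{pa}]<\infty$.

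\emph{Uniform $L^p(\P;H_\gamma)$-moments.} Here I apply Lemma~\ref{bootstrap} once for each family. For $Y^{N,I,R}$ I take $\kappa(s)=\lfloor s\rfloor_{T/N}$, $Z_s=Y^{N,I,R}_{\lfloor s\rfloor_{T/N}}$, $\xi$ replaced by $P_I\xi$, and $F$, $B$ replaced by the cut-off projected coefficients $x\mapsto\1_{\{\|P_IF(x)\|_H+\|P_IB(x)\|_{\HS(U,H)}\le(N/T)^\theta\}}P_IF(x)$ and $x\mapsto\1_{\{\cdots\}}P_IB(x)R$ from the defining equation of $Y^{N,I,R}$ in Section~\ref{setting6}; the growth bound $\max\{\|\cdot\|_{H_{-\alpha}},\|\cdot\|_{\HS(U,H_{-\beta})}\}\le C(1+\|\cdot\|_H^a)$ required in Section~\ref{setting2} is inherited (using $\|P_I\|_{L(H_{-\alpha})}\le1$, $\|P_I\|_{L(H_{-\beta})}\le1$, $\|R\|_{L(U)}\le1$ and that the indicator is $[0,1]$-valued), and the integrability hypothesis holds since these coefficients are bounded by $(N/T)^\theta$. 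Combining the conclusion of Lemma~\ref{bootstrap} (whose prefactors are finite because $\gamma<\min\{1-\alpha,\nicefrac{1}{2}-\beta\}$) with the bound on $\sup_u\|Y^{N,I,R}_u\|_{L^{pa}(\P;H)}$ just obtained and with $\|P_I\xi\|_{L^p(\P;H_\gamma)}\le\|\xi\|_{L^{pa}(\P;H_\gamma)}<\infty$ gives $\sup_{N,I,R,t}\E[\|Y^{N,I,R}_t\|_{H_\gamma}^p]<\infty$. Applying Lemma~\ref{bootstrap} in the same way with $\kappa=\operatorname{Id}_{[0,T]}$, $Z=X^I$, and $F$, $B$ replaced by $P_IF$, $P_IB$ (using the mild form of the defining equation of $X^I$, as in the proof of Lemma~\ref{Lemma13}), together with the $L^{pa}(\P;H)$-bound on $X^I$, yields $\sup_{I,t}\E[\|X^I_t\|_{H_\gamma}^p]<\infty$.

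\emph{Galerkin convergence and conclusion.} For the bound on $X$ and the convergence statement I invoke Corollary~\ref{galerkin.convergence}, with $I_0:=\H$ (so $P_{I_0}=\operatorname{Id}_H$ and $X$ is the process ``$X^0$'' of \eqref{see68}, i.e.\ the mild form of the defining equation of $X^{I_n}$) and with the smoothing index ``$\eta$'' of Section~\ref{setting.probability} taken equal to $2\gamma\in[0,1)$ (admissible since $\gamma<\nicefrac{1}{2}$). Then $H_{\gamma-\eta}=H_{-\gamma}\supseteq H$ and $H_{\gamma-\nicefrac{\eta}{2}}=H$, so the continuity hypotheses of Section~\ref{setting.probability} follow from the continuous embedding $H\hookrightarrow H_{-\gamma}$, and the local Lipschitz property required by Corollary~\ref{galerkin.convergence} follows from $\max\{\|F(x)-F(y)\|_H,\|B(x)-B(y)\|_{\HS(U,H)}\}\le C\|x-y\|_{H_\delta}(1+\|x\|_{H_\gamma}^c+\|y\|_{H_\gamma}^c)$ together with $\delta\le\gamma$. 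Since the $\limsup$-hypothesis of Corollary~\ref{galerkin.convergence} is exactly the uniform $L^p(\P;H_\gamma)$-bound on $(X^{I_n})_{n\in\N}$ established above, Corollary~\ref{galerkin.convergence} yields $\sup_{t\in[0,T]}\|X_t\|_{L^p(\P;H_\gamma)}<\infty$ and $\lim_{n\to\infty}(\sup_{t\in[0,T]}\|X_t-X_t^{I_n}\|_{L^q(\P;H_\gamma)})=0$. Collecting the five finiteness statements, all of whose constants are independent of $N$, $I$ and $R$, completes the proof. The main obstacle is bookkeeping rather than a single hard estimate: the moment lemmas of Section~\ref{section2} and Section~\ref{section3} demand their structural hypotheses on all of $H_\gamma$, which forces the ``wrapping'' of $F$ and $B$ with the projection, the cut-off indicator and the noise operator and a careful check that every inherited constant stays uniform in $N$, $I$, $R$; and Corollary~\ref{galerkin.convergence} lives in the interpolation-space framework of Section~\ref{setting.probability}, so one must choose the auxiliary smoothing index and verify the corresponding embeddings and the local Lipschitz bound before it can be applied.
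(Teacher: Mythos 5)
Your proposal is correct and follows essentially the same route as the paper's proof: an It\^o/Gronwall argument for the $L^{pa}(\P;H)$-bound on $X^I$, Lemma~\ref{Lemma1} for the $L^{pa}(\P;H)$-bound on $Y^{N,I,R}$, Lemma~\ref{bootstrap} to upgrade both to $H_\gamma$-moments, and Corollary~\ref{galerkin.convergence} for the convergence statement and the moment bound on $X$. You merely make explicit several hypothesis verifications (the projection/cut-off wrapping of the coefficients and the choice of the smoothing index for Section~\ref{setting.probability}) that the paper leaves implicit.
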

\begin{proof}[Proof of Proposition~\ref{last}]
It\^o's formula and Young's inequality prove that for all 
$t\in [0,T], I\in \mathcal{P}_0(\H)$ it holds that
\begin{equation}
\begin{split}
&
\E [ \| X_t^{I}\|_H^{pa} ]
\\
&
\leq
\E [ \| X_0^{I} \|_H^{pa} ]
+
pa
\int_0^t
\E
\Big[ \|X_s^{I}\|_H^{pa-2} 
\big( \langle X_s^{I}, P_{I} F(X_s^{I})\rangle_H + \tfrac{pa-1}{2}
\| P_{I} B(X_s^{I})\|_{\HS(U,H)}^2
\big )
\Big ]
\, ds
\\
&
\leq
\E [ \| X_0^{I} \|_H^{pa} ]
+
paC
\int_0^t
\E
\big[ \|X_s^{I}\|_H^{pa-2} 
(1 + \|X_s^{I}\|_H^2)
\big ]
\, ds
\\
&
\leq
\E [ \| X_0^{I} \|_H^{pa} ]
+
2C
\int_0^t
\E
\big[
(pa-1)
\|X_s^{I}\|_H^{pa}
+ 1
\big ]
ds
.
\end{split}
\end{equation}
Therefore, Gronwall's lemma proves that for all $t\in [0,T], I\in \mathcal{P}_0(\H)$ it holds that
$\E[ \| X_t^{I}\|_H^{pa} ] \leq ( \E[ \| \xi \|_H^{pa} ] + 2CT) \, e^{ 2 C ( p a - 1 ) T }. $
Lemma \ref{bootstrap} hence implies that
$
\sup\nolimits_{I\in \mathcal{P}_0(\H)} 
\sup\nolimits_{t\in [0,T]}  \| X_t^{I}\|_{L^{p}(\P; H_\gamma)}<\infty.
$
Combining this with Corollary \ref{galerkin.convergence} shows that
$
\lim\nolimits_{ n\to \infty} 
\left(
\sup\nolimits_{t\in [0,T]}
\| X_t - X_t^{I_n} \|_{L^q(\P; H_\gamma)}
\right)
=
0
$
and that
$\sup_{t\in [0,T]} \| X_t \|_{L^p(\P; H_\gamma)} <\infty.$
Moreover, note that combining Lemma \ref{Lemma1} and Lemma \ref{bootstrap} proves that
\begin{equation}
\sup\nolimits_{R\in L^1(U)}
\sup\nolimits_{I\in \mathcal{P}_0(\H)}
\sup\nolimits_{N\in \N}
\sup\nolimits_{t\in [0,T]} 
\left( \|Y_t^{N,I,R} \|_{L^{pa}(\P; H)}
+ \|Y_t^{N,I,R} \|_{L^p(\P; H_\gamma)}
\right)<\infty.
\end{equation}
The proof of Proposition \ref{last} is thus completed.
\end{proof}
\begin{corollary}
\label{finite.function}
Assume the setting in Section \ref{setting6},
assume that 
$ \xi \in L^{p(c+1)a}(\P; H_\gamma) $,
assume that for all 
$ x \in H_1 $ 
it holds that
$
\langle x, F(x)\rangle_H
+
\tfrac{  p (c+1) a -1}{2}
\left \| B(x) \right \|_{\HS(U,H)}^2
\leq
\d (1 + \left \| x \right\|_{H}^2)
$,
and let
$X^I\colon [0,T]\times \Omega \to P_{I}(H_\gamma),$
$I\in \mathcal{P}_0(\H),$
be $(\mathcal{F}_t)_{t\in [0,T]}$-adapted stochastic processes
with continuous sample paths such that for all $t\in [0,T], I\in \mathcal{P}_0(\H)$ it 
holds $\P$-a.s.\,that
\begin{equation}
X_t^{I} = P_{I} \xi + \int_0^t [ A X_s^{I} + P_{I}F( X_s^{I})] \, ds
+
\int_0^t P_{I} B ( X^{I}_s ) \, dW_s
.
\end{equation}
Then 
\begin{equation}
\label{machu}
\sup_{\substack{I\in \mathcal{P}_0(\H), N\in \N,
\\ R\in L^1(U), t \in [0,T] }}
\!
\left \|  
\| F(Y^{N,I,R}_t)\|_H +  \|F(X_t)\|_H  +  \| B(Y^{N,I,R}_t)\|_{\HS(U,H)} + \| B(X_t)\|_{\HS(U,H)} 
\right \|_{L^p(\P; \R)}
\!\!\!\!
<\infty.
\end{equation}
\end{corollary}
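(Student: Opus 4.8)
The plan is to bound the $L^p(\P;\R)$-norm of $\|F(\cdot)\|_H + \|B(\cdot)\|_{\HS(U,H)}$ evaluated at the approximation $Y^{N,I,R}_t$ and at the exact solution $X_t$ by exploiting the local Lipschitz-in-$H_\delta$, polynomial-in-$H_\gamma$ growth assumption on $F$ and $B$ from the setting of Section~\ref{setting6}. First I would observe that for all $x \in H_\gamma$ one has $\|F(x)\|_H \le \|F(0)\|_H + C\|x\|_{H_\delta}(1+\|x\|_{H_\gamma}^c + \|0\|_{H_\gamma}^c) \le \|F(0)\|_H + C\|x\|_{H_\gamma}(1+\|x\|_{H_\gamma}^c)$ (using $\delta \le \gamma$, so $\|\cdot\|_{H_\delta} \le \|\cdot\|_{H_\gamma}$ up to a constant coming from the operator norm of $(-A)^{\delta-\gamma}$), and similarly $\|B(x)\|_{\HS(U,H)} \le \|B(0)\|_{\HS(U,H)} + C\|x\|_{H_\gamma}(1+\|x\|_{H_\gamma}^c)$. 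Hence there is a constant $\tilde C \in [0,\infty)$ (depending only on $C$, $\|F(0)\|_H$, $\|B(0)\|_{\HS(U,H)}$, and $\|(-A)^{\delta-\gamma}\|_{L(H)}$) such that for all $x\in H_\gamma$ it holds that $\|F(x)\|_H + \|B(x)\|_{\HS(U,H)} \le \tilde C\,(1 + \|x\|_{H_\gamma}^{c+1})$.

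Next I would apply this pointwise bound with $x = Y^{N,I,R}_t(\omega)$ and $x = X_t(\omega)$, take $L^p(\P;\R)$-norms, and use the triangle inequality to reduce the claim to showing that
\begin{equation*}
\sup_{\substack{I\in \mathcal{P}_0(\H), N\in \N,\\ R\in L^1(U), t\in[0,T]}}
\left( \|Y^{N,I,R}_t\|_{L^{p(c+1)}(\P;H_\gamma)}^{c+1} + \|X_t\|_{L^{p(c+1)}(\P;H_\gamma)}^{c+1} \right) < \infty.
\end{equation*}
For the term involving $Y^{N,I,R}_t$, I would invoke Proposition~\ref{last}: the a~priori bound established there (via Lemma~\ref{Lemma1} and Lemma~\ref{bootstrap}) gives exactly the supremum over $R\in L^1(U)$, $I\in\mathcal{P}_0(\H)$, $N\in\N$, $t\in[0,T]$ of $\|Y^{N,I,R}_t\|_{L^{p}(\P;H_\gamma)}$, but I need the higher moment $L^{p(c+1)}(\P;H_\gamma)$; this is where the hypothesis of the present corollary that $\langle x, F(x)\rangle_H + \tfrac{p(c+1)a-1}{2}\|B(x)\|_{\HS(U,H)}^2 \le C(1+\|x\|_H^2)$ and $\xi\in L^{p(c+1)a}(\P;H_\gamma)$ comes in. Concretely, I would apply Lemma~\ref{Lemma1} with the exponent $p$ there replaced by $p(c+1)a$ (the dissipativity hypothesis is precisely what is needed to invoke the setting of Section~\ref{setting1} at that exponent), obtaining a uniform $L^{p(c+1)a}(\P;H)$-bound on $Y^{N,I,R}_t$, and then apply Lemma~\ref{bootstrap} (with its $p$ replaced by $p(c+1)$ and its $a$ being the growth exponent $a$) to upgrade this to a uniform $L^{p(c+1)}(\P;H_\gamma)$-bound. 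For the term involving $X_t$, I would argue analogously: the solution process $X$ satisfies the same mild equation, so the same It\^o-formula/Gronwall argument underlying Lemma~\ref{Lemma1} — or directly Proposition~\ref{last} applied with the raised moment exponent — yields $\sup_{t\in[0,T]}\|X_t\|_{L^{p(c+1)}(\P;H_\gamma)} < \infty$, using $\xi\in L^{p(c+1)a}(\P;H_\gamma)$ and the dissipativity hypothesis at the exponent $p(c+1)a$.

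The main obstacle is bookkeeping of exponents: one must be careful that the moment exponent fed into Lemma~\ref{Lemma1} (namely $p(c+1)a$) and the one fed into Lemma~\ref{bootstrap} (namely $p(c+1)$, with growth exponent $a$, requiring $L^{p(c+1)a}(\P;H)$ control of the argument) are exactly matched by the hypotheses stated in this corollary, and that the coercivity inequality assumed here with constant $\tfrac{p(c+1)a-1}{2}$ in front of $\|B\|_{\HS(U,H)}^2$ is the correct one to apply Lemma~\ref{Lemma1} at exponent $p(c+1)a$ (recall Lemma~\ref{Lemma1} needs $\langle x,F(x)\rangle_H + \tfrac{\tilde p - 1}{2}\|B(x)\|^2 \le C(1+\|x\|_H^2)$ with $\tilde p$ the relevant exponent). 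Once the exponents are lined up, the estimate $\|F(x)\|_H + \|B(x)\|_{\HS(U,H)} \le \tilde C(1+\|x\|_{H_\gamma}^{c+1})$ together with the uniform $L^{p(c+1)}(\P;H_\gamma)$-bounds on $Y^{N,I,R}_t$ and $X_t$ immediately yields \eqref{machu}, completing the proof.
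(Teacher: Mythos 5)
Your proposal is correct and follows essentially the same route as the paper: the paper likewise bounds $\|F(x)\|_H+\|B(x)\|_{\HS(U,H)}$ by $\|F(0)\|_H+\|B(0)\|_{\HS(U,H)}+C\|(-A)^{\delta-\gamma}\|_{L(H)}(1+\|x\|_{H_\gamma})^{c+1}$ via the Lipschitz hypothesis with $y=0$, and then invokes Proposition~\ref{last} at the raised moment exponent $p(c+1)$ (whose hypotheses are exactly the coercivity with constant $\tfrac{p(c+1)a-1}{2}$ and $\xi\in L^{p(c+1)a}(\P;H_\gamma)$ assumed in the corollary) to get the uniform $L^{p(c+1)}(\P;H_\gamma)$-bounds on $Y^{N,I,R}_t$ and $X_t$. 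Your exponent bookkeeping matches the paper's.
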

\begin{proof}[Proof
of Corollary~\ref{finite.function}]
Observe that for all $t\in [0,T], N\in \N, I\in \mathcal{P}_0(\H), R\in L^1(U)$ it holds that
\begin{equation}
\begin{split}
\label{ratata}
&
\max\{
\| F(Y^{N,I,R}_t) \|_{L^p(\P; H)}
,
\| B(Y^{N,I,R}_t) \|_{L^p(\P; \HS(U,H))}
\}
\\
&
\leq
\| F( 0 ) \|_H
+
\| B( 0 ) \|_{\HS(U,H)}
+
C
\| (-A)^{\delta - \gamma} \|_{L(H)}
\big (
1
+
\| Y^{N,I,R}_t \|_{L^{p (c+1)}(\P; H_\gamma)}
\big )^{c+1}
.
\end{split}
\end{equation}
Proposition \ref{last} hence proves that
\begin{equation}
\sup\nolimits_{R\in L^1(U)}
\sup\nolimits_{I \in \mathcal{P}_0(\H)}
\sup\nolimits_{N\in \N}
\sup\nolimits_{t\in [0,T]}
\!\left(
\|F(Y^{N,I,R}_t)\|_{L^p(\P; H)}
+
\|B(Y^{N,I,R}_t)\|_{L^p(\P; H)}
\right)
<\infty.
\end{equation}
Moreover, note that for all $t\in [0,T]$ it holds that
\begin{equation}
\begin{split}
&
\max\{
\| F(X_t) \|_{L^p(\P; H)}
,
\| B(X_t) \|_{L^p(\P; \HS(U,H))}
\}
\\
&
\leq
\| F( 0 ) \|_H
+
\| B( 0 ) \|_{\HS(U,H)}
+
C
\| (-A)^{\delta - \gamma} \|_{L(H)}
\big (
1
+
\| X_t \|_{L^{p (c+1)}(\P; H_\gamma)}
\big )^{c+1}
.
\end{split}
\end{equation}
Proposition \ref{last} 
hence
shows that
\begin{equation}
\sup\nolimits_{t\in [0,T]}
\|F(X_t)\|_{L^p(\P; H)}
+
\sup\nolimits_{t\in [0,T]}
\|B(X_t)\|_{L^p(\P; H)}
<\infty.
\end{equation}
The proof of Corollary \ref{finite.function} is thus completed.
\end{proof}
\begin{corollary}
\label{rex}
Assume the setting in Section \ref{setting6}, 
let $\eta\in [\gamma,\nicefrac{1}{2}),$
assume that $\xi(\Omega)\subseteq H_\eta,$
assume that
$\E\big[\|\xi\|_{H_\eta}^{p(c+1)a}\big]<\infty,$
assume that for all $ x \in H_1 $ it holds that
$\langle x, F(x) \rangle_H + \tfrac{p(c+1)a-1}{2}
\| B(x)\|_{\HS(U,H)}^2 \leq C(1+\| x\|_H^2),$
and let
$X^I\colon [0,T]\times \Omega \to P_{I}(H_\gamma), I\in \mathcal{P}_0(\H),$
be $(\mathcal{F}_t)_{t\in [0,T]}$-adapted stochastic processes
with continuous sample paths such that for all $t\in [0,T], I\in \mathcal{P}_0(\H)$ it 
holds $\P$-a.s.\,that
\begin{equation}
X_t^{I} = P_{I} \xi + \int_0^t [ A X_s^{I} + P_{I}F( X_s^{I})] \, ds
+
\int_0^t P_{I} B ( X^{I}_s ) \, dW_s
.
\end{equation}
Then it holds for all $ t \in [0,T] $ that $ 
\P( X_t \in H_{ \eta } ) = 1 $ and it holds that 
\begin{equation}
\label{eq11111}
\sup\nolimits_{R\in L^1(U)}
\sup\nolimits_{I\in \mathcal{P}(\H)}
\sup\nolimits_{N\in \N}
\sup\nolimits_{t\in [0,T]}
\big(
\| Y_t^{N,I,R}\|_{L^p(\P; H_\eta)}
+
\|X_t^I\|_{L^p(\P; H_\eta)}
\big)
<\infty.
\end{equation}
\end{corollary}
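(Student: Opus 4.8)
The plan is to establish the two assertions of Corollary~\ref{rex} by separately controlling the solution process $X$, the Galerkin processes $X^I$, and the numerical approximations $Y^{N,I,R}$ in the $H_\eta$-norm, in each case via the a~priori bounds already assembled in this paper. First I would record the basic moment estimate in the $H$-norm: It\^o's formula together with Young's inequality and the hypothesis $\langle x, F(x)\rangle_H + \tfrac{p(c+1)a-1}{2}\|B(x)\|_{\HS(U,H)}^2 \leq C(1+\|x\|_H^2)$ yields, exactly as in the proof of Proposition~\ref{last}, the bound $\sup_{I\in\mathcal{P}_0(\H)}\sup_{t\in[0,T]}\E[\|X_t^I\|_H^{p(c+1)a}] < \infty$, and by Corollary~\ref{galerkin.convergence} (or directly by lower semicontinuity of $v\mapsto\|v\|_H^{p(c+1)a}$ under Galerkin limits) the same holds for $X$; similarly Lemma~\ref{Lemma1} gives $\sup_{R\in L^1(U)}\sup_{I\in\mathcal{P}_0(\H)}\sup_{N\in\N}\sup_{t\in[0,T]}\|Y_t^{N,I,R}\|_{L^{p(c+1)a}(\P;H)} < \infty$, since the drift-stopping mechanism in the definition of $Y^{N,I,R}$ respects the monotonicity-type inequality in exactly the form Lemma~\ref{Lemma1} requires.

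Next I would lift these $H$-bounds to $H_\eta$-bounds. For the numerical approximations $Y^{N,I,R}$ one applies the bootstrap Lemma~\ref{bootstrap} with $Z = Y^{N,I,R}_{\lfloor\cdot\rfloor_{T/N}}$ and $\kappa(s) = \lfloor s\rfloor_{T/N}$, using the growth hypothesis $\max\{\|F(x)\|_{H_{-\alpha}},\|B(x)\|_{\HS(U,H_{-\beta})}\}\leq C(1+\|x\|_H^a)$ from the setting in Section~\ref{setting6}; since $\gamma < \min\{1-\alpha,\nicefrac12-\beta\}$ and $H_\gamma\hookrightarrow H$, and since the $H_\gamma$-bound already follows from Proposition~\ref{last}, one gets a uniform $H_\gamma$-bound, and then a second application (now in the $H_\eta$-norm, possible because $\eta < \nicefrac12$) delivers the desired uniform $H_\eta$-bound for $Y^{N,I,R}$. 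For $X^I$ the same two-step bootstrap applies with $Z = X^I$ and $\kappa = \mathrm{id}$. For the limit process $X$ I would either run the same bootstrap argument directly on the mild formulation of $X$, or invoke Corollary~\ref{galerkin.convergence}/Lemma~\ref{convergence_probability_lipschitz}-type convergence of $X^{I_n}$ to $X$ together with Fatou's lemma to transfer the uniform $H_\eta$-bound from the Galerkin processes to $X$; the assumption $\xi(\Omega)\subseteq H_\eta$ and $\E[\|\xi\|_{H_\eta}^{p(c+1)a}] < \infty$ is what makes the $e^{tA}\xi$ term in the mild formulation bounded in $L^p(\P;H_\eta)$ uniformly in $t$.

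Finally, the assertion $\P(X_t\in H_\eta) = 1$ for every $t\in[0,T]$ follows from the $L^p(\P;H_\eta)$-bound just obtained (finiteness of the $L^p$-norm in $H_\eta$ in particular forces $X_t\in H_\eta$ almost surely), or alternatively directly from Lemma~\ref{more.improved} applied to the mild formulation of $X$, whose hypothesis $\sup_{s\in[0,T]}\|\,\|F(X_s)\|_H + \|B(X_s)\|_{\HS(U,H)}\,\|_{L^p(\P;\R)} < \infty$ is supplied by Corollary~\ref{finite.function}. I would then combine the three uniform bounds into the single supremum in \eqref{eq11111}, noting that the bound is uniform over $I\in\mathcal{P}(\H)$ (not merely $\mathcal{P}_0(\H)$) because the $H_\eta$-norm estimates produced by Lemma~\ref{bootstrap} and Lemma~\ref{more.improved} depend on the data only through constants that do not see the cardinality of $I$, and $P_I$ is a contraction on each $H_r$.

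The main obstacle I expect is the bookkeeping around the exponent chain: one must choose the integrability exponents in the successive applications of Lemma~\ref{bootstrap} (which costs a factor $a$ in the inner exponent each time one uses the growth bound with exponent $a$) so that the outermost exponent stays at $p(c+1)a$ or below, which is precisely why the hypotheses are stated with $\xi\in L^{p(c+1)a}(\P;H_\eta)$ and the coercivity inequality with the coefficient $\tfrac{p(c+1)a-1}{2}$; getting these exponents to match up (and confirming that the two-fold bootstrap with the $H_\gamma$ then $H_\eta$ norms really only needs finitely many powers of $a$, not an unbounded tower) is the delicate part, whereas everything else is a routine assembly of results already proved in Sections~\ref{section2}--\ref{section.galerkin}.
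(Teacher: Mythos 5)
Your proposal is correct and follows essentially the same route as the paper, whose proof is exactly the combination of Corollary~\ref{finite.function} (which packages the $H$-valued moment bounds on $F(Y^{N,I,R}_t)$, $B(Y^{N,I,R}_t)$, $F(X_t)$, $B(X_t)$ obtained from Lemma~\ref{Lemma1}, Lemma~\ref{bootstrap}, and Proposition~\ref{last}) with the second bootstrap Lemma~\ref{more.improved}. The only point to make precise is that your ``second application in the $H_\eta$-norm'' cannot literally be Lemma~\ref{bootstrap} (which is confined to the $H_\gamma$-norm with $\gamma<\min\{1-\alpha,\nicefrac{1}{2}-\beta\}$ and to growth in the $H$-norm) but must be Lemma~\ref{more.improved}, whose hypothesis is supplied by Corollary~\ref{finite.function} exactly as you note in your final paragraph.
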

\begin{proof}[Proof of Corollary~\ref{rex}]
Combining Lemma \ref{more.improved} and Corollary \ref{finite.function} proves $ \forall 
\, t \in [0,T] \colon P( X_t \in H_{ \eta } ) = 1 $ and \eqref{eq11111}. The proof 
of Corollary \ref{rex} is thus completed.
\end{proof}
\begin{theorem}
\label{Finale}
Assume the setting in Section \ref{setting6},
let
$ \nu \in ( 0, \nicefrac{ 1 }{ 2 } - \delta ) $,
$ \eta \in [ \max\{ \delta, \gamma \}, \nicefrac{ 1 }{ 2 } ) $,
$ \kappa \in ( \nicefrac{ 2 }{ p }, \infty) $,
assume that
$
  \xi( \Omega ) \subseteq H_{ \eta }
$,
assume that
$
  \E\big[
    \| \xi \|_{ H_{ \eta } }^{ 2 a (c+1) p \max\{ \kappa, \nicefrac{ 1 }{ \theta } \} } 
  \big]
  < \infty
$,
and assume
that for all $ x , y \in H_1 $ it holds that
$
  \langle x, F(x) \rangle_H
  +
  \frac{ 2a(c+1)p\max \{ \kappa, \nicefrac{1}{\theta}\} - 1}{ 2  }
  \left\| 
    B(x) 
  \right \|_{\HS(U,H)}^2
\leq
  \d ( 1 + \| x \|_H^2 )
$
and
$
  \langle x - y , Ax - Ay + F(x) - F(y) \rangle_H 
  + 
  \tfrac{ ( p - 1 )( 1 + \varepsilon ) }{ 2 } 
  \left\| B(x) - B(y) \right\|_{\HS( U, H) }^2  
  \leq C \left\| x - y \right\|_H^2
$.
Then 
there exists a real number $K \in [0,\infty)$ such that for all $N \in \N, I\in \mathcal{P}_0(\H),$
$R\in L^1(U)$
it holds that
\begin{equation}
\begin{split}
  \sup_{ t \in [0,T] }
  \| 
    X_t - Y_t^{ N, I, R } 
  \|_{ L^p(\P; H) }
\leq
  K
  \left[
    N^{  \delta - \eta }
    +
    \| P_{ \H \setminus I } \|_{ L(H, H_{\delta-\eta}) }
    +
    \sup_{v\in H_\eta}
    \Big(
      \tfrac{
        \| 
          B(v) ( \operatorname{Id}_U - R ) 
        \|_{
          \HS( U, H_{ - \nu } ) }
        }{
          ( 1 + \| v \|_{ H_{ \eta } } )^\kappa
        }
    \Big)
  \right]
.
\end{split}
\end{equation}
\end{theorem}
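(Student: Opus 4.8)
The plan is to decompose the error $\sup_{t\in[0,T]}\|X_t-Y_t^{N,I,R}\|_{L^p(\P;H)}$ into a spatial, a noise and a temporal contribution, each handled by one of the tools developed above. For $I\in\mathcal P_0(\H)$ and $R\in L^1(U)$ I would introduce the spectral Galerkin process $X^I\colon[0,T]\times\Omega\to P_I(H_\gamma)$ solving $X^I_t=P_I\xi+\int_0^t[AX^I_s+P_IF(X^I_s)]\,ds+\int_0^tP_IB(X^I_s)\,dW_s$ and the Galerkin--noise process $X^{I,R}$ solving $X^{I,R}_t=P_I\xi+\int_0^t[AX^{I,R}_s+P_IF(X^{I,R}_s)]\,ds+\int_0^tP_IB(X^{I,R}_s)R\,dW_s$; existence, uniqueness, adaptedness and sample path continuity of these finite-dimensional processes follow from the local Lipschitz property (implied by the assumed Lipschitz bound with polynomial weight), the global monotonicity hypothesis (and $\|R\|_{L(U)}\le1$ for $X^{I,R}$), the boundedness of $A$ on $P_I(H)$, and the a priori moment bounds recalled below. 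The triangle inequality then gives, pointwise in $t$, $\|X_t-Y_t^{N,I,R}\|_{L^p(\P;H)}\le\|X_t-X^I_t\|_{L^p(\P;H)}+\|X^I_t-X^{I,R}_t\|_{L^p(\P;H)}+\|X^{I,R}_t-Y_t^{N,I,R}\|_{L^p(\P;H)}$, and I would bound the three summands separately.

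For the temporal term I would apply Corollary~\ref{corollary.combined} on the Hilbert space $P_I(H)$, equipped with the bounded operator $A|_{P_I(H)}$ and the coefficients $P_IF$ and $P_IB(\cdot)R$. Since $P_I$ is self-adjoint and commutes with $A$, the identity $\langle x-y,A(x-y)+P_IF(x)-P_IF(y)\rangle_H=\langle x-y,A(x-y)+F(x)-F(y)\rangle_H$ holds for $x,y\in P_I(H)$, and $\|R\|_{L(U)}\le1$ yields $\|P_IB(x)R-P_IB(y)R\|_{\HS(U,H)}\le\|B(x)-B(y)\|_{\HS(U,H)}$; hence the monotonicity and Lipschitz hypotheses of Corollary~\ref{corollary.combined} are inherited. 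The fact that $Y^{N,I,R}$ is stopped using $\|P_IB(\cdot)\|_{\HS(U,H)}$ rather than $\|P_IB(\cdot)R\|_{\HS(U,H)}$ only makes the stopping more conservative and leaves the proof of Corollary~\ref{corollary.combined} intact, since on the stopped set both quantities are at most $(N/T)^\theta$ and Lemma~\ref{Lemma.Markov} applies verbatim with $P_IF,P_IB$ in place of $F,B$. This yields $\sup_{t\in[0,T]}\|X^{I,R}_t-Y_t^{N,I,R}\|_{L^p(\P;H)}\le KN^{\delta-\eta}$ with $K$ independent of $N,I,R$, the required uniform moments of $Y^{N,I,R}$ being supplied by Lemma~\ref{Lemma1}, Lemma~\ref{bootstrap}, Corollary~\ref{finite.function}, Corollary~\ref{rex} and Proposition~\ref{last}.

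For the noise term I would invoke Lemma~\ref{noise.galerkin} with $I$ fixed, $R_1=R$ and $R_2=\operatorname{Id}_U$ (so that $X^{I,R_2}=X^I$), bounding $\sup_{t\in[0,T]}\|X^I_t-X^{I,R}_t\|_{L^p(\P;H)}$ by a constant multiple of $\sup_{v\in H_\eta}\|B(v)(\operatorname{Id}_U-R)\|_{\HS(U,H_{-\nu})}(1+\|v\|_{H_\eta}^\kappa)^{-1}$ times uniformly bounded moments of $X^I$ and $\xi$; passing from $1+\|v\|_{H_\eta}^\kappa$ to $(1+\|v\|_{H_\eta})^\kappa$ costs only a $\kappa$-dependent factor. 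For the spatial term I would apply Lemma~\ref{lem:spatial_rate} to pairs $I\subseteq I'$ in $\mathcal P_0(\H)$, estimating $\|P_{\H\setminus I}X^{I'}_u\|_{H_\delta}\le\|P_{\H\setminus I}\|_{L(H,H_{\delta-\eta})}\|X^{I'}_u\|_{H_\eta}$ (legitimate because $\eta\ge\delta$) and using the uniform-in-$I'$ moment bounds of Corollary~\ref{rex} and Proposition~\ref{last} to obtain $\sup_{t\in[0,T]}\|X^I_t-X^{I'}_t\|_{L^p(\P;H)}\le K\|P_{\H\setminus I}\|_{L(H,H_{\delta-\eta})}$ with $K$ independent of $I'$; letting $I'$ run through a sequence exhausting $\H$ as in Proposition~\ref{last}, using the convergence $X^{I'}\to X$ of Corollary~\ref{galerkin.convergence}/Proposition~\ref{last} (along an a.s.-convergent subsequence) and Fatou's lemma, the same bound survives in the limit, giving $\sup_{t\in[0,T]}\|X_t-X^I_t\|_{L^p(\P;H)}\le K\|P_{\H\setminus I}\|_{L(H,H_{\delta-\eta})}$. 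Summing the three estimates produces the asserted bound.

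The step I expect to be the main obstacle is the a priori moment bookkeeping. Corollary~\ref{corollary.combined}, Lemma~\ref{noise.galerkin} and Lemma~\ref{lem:spatial_rate} require $L^r(\P;\cdot)$-norms of $Y^{N,I,R}$, $X^I$ and $\xi$ in $H$, $H_\gamma$ and $H_\eta$ at orders that are fixed multiples of $p$ --- arising from exponents such as $2p/\theta$, $2p\kappa$, $2pc$, $2pca$ and, after composing with the growth estimate~\eqref{ratata}, $2p(c+1)/\theta$ --- and all of these must be finite and uniform in $N$, $I$ and $R$. This is precisely why the hypotheses impose the strong coercivity $\langle x,F(x)\rangle_H+\tfrac{2a(c+1)p\max\{\kappa,1/\theta\}-1}{2}\|B(x)\|_{\HS(U,H)}^2\le C(1+\|x\|_H^2)$ and the moment condition $\E[\|\xi\|_{H_\eta}^{2a(c+1)p\max\{\kappa,1/\theta\}}]<\infty$: one must check that, after each application of the bootstrap Lemma~\ref{bootstrap} (which multiplies the required $H$-moment order by $a$), of Lemma~\ref{more.improved}, and of~\eqref{ratata} (which multiplies by $c+1$), the order still stays at or below $2a(c+1)p\max\{\kappa,1/\theta\}$, so that Lemma~\ref{Lemma1}, Corollary~\ref{finite.function}, Corollary~\ref{rex} and Proposition~\ref{last} indeed deliver finite, parameter-uniform moment bounds at the orders demanded by the three error estimates.
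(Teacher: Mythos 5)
Your proposal is correct and follows essentially the same route as the paper: the same three-way splitting into spatial (Lemma~\ref{lem:spatial_rate} applied to $I\subseteq I'$ with $I'$ exhausting $\H$, then passing to the limit via Proposition~\ref{last}), noise (Lemma~\ref{noise.galerkin} with $R_1=R$, $R_2=\operatorname{Id}_U$) and temporal (Corollary~\ref{corollary.combined}) contributions, with the final estimate $\sup_t\|P_{\H\setminus I}X_t\|_{L^{2p}(\P;H_\delta)}\le\|P_{\H\setminus I}\|_{L(H,H_{\delta-\eta})}\sup_t\|X_t\|_{L^{2p}(\P;H_\eta)}$ and the moment bookkeeping supplied by Proposition~\ref{last}, Corollary~\ref{finite.function} and Corollary~\ref{rex} exactly as in the paper. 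Your explicit remark about the indicator in $Y^{N,I,R}$ being based on $\|P_IB(\cdot)\|_{\HS(U,H)}$ rather than $\|P_IB(\cdot)R\|_{\HS(U,H)}$ is a subtlety the paper glosses over, and your resolution of it is sound.
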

\begin{proof}[Proof of Theorem~\ref{Finale}]
First of all, observe that it is well 
known that the fact that the functions
$P_I(H)\ni x\mapsto P_I(F(x))\in P_I(H),I\in \mathcal{P}_0(\H),$
and 
$P_I(H)\ni x \mapsto P_I(B(x))R \in \HS(P(U),P_I(H)), I\in \mathcal{P}_0(\H), R\in L^1(U),$
are 
locally Lipschitz continuous and the fact that 
$ \forall\,x\in H_1 \colon 
\langle x, F(x)\rangle_H
+\frac{2a(c+1)p\max \{ \kappa, \nicefrac{1}{\theta}\}-1}{2}\|B(x)\|_{\HS(U,H)}^2\leq C(1+\|x\|_H^2)$ ensure that there exist $(\mathcal{F}_t)_{t\in [0,T]}$-adapted stochastic processes
$X^{I,R}\colon [0,T]\times \Omega \to P_I(H_\gamma), I\in \mathcal{P}_0(\H),
R\in L^1(U),$
with continuous sample paths such that for all $t\in [0,T], I\in \mathcal{P}_0(\H),R\in L^1(U)$ it
holds $\P$-a.s.\,that
\begin{equation}
X_t^{I,R} = P_I\xi + \int_0^t [ A X_s^{I,R} + P_IF( X_s^{I,R})] \, ds
+
\int_0^t P_IB ( X^{I,R}_s )R \, dW_s
.
\end{equation}
Moreover, note that the triangle inequality 
proves that for all $t\in [0,T], N \in \N, I, \tilde I \in \mathcal{P}_0(\H),$
$R\in L^1(U)$ 
with $I\subseteq \tilde I$
it holds that
\begin{equation}
\begin{split}
\| X_t - Y_t^{N,I,R}  \|_{L^p(\P;H)}
&
\leq
 \| X_t - X^{\tilde I, \operatorname{Id}_U }_t  \|_{L^p(\P;H)}
+
 \| X_t^{\tilde I, \operatorname{Id}_U } - X^{I, \operatorname{Id}_U }_t \|_{L^p(\P;H)}
\\
&
+
\| X_t^{ I,  \operatorname{Id}_U } - X^{I, R}_t\|_{L^p(\P;H)}
+
\| X^{I,R}_t - Y_t^{N,I,R}  \|_{L^p(\P;H)}
.
\end{split}
\end{equation}
In the next step
we note that the assumption that $H$ is separable implies that there exist non-decreasing sets $ I_n \in 
\mathcal{P}_0( \H ) $, $ n \in \N $, with the property that $ \cup_{ n
\in \N } I_n = \H $. 
Next we combine Corollary \ref{corollary.combined}, Lemma \ref{lem:spatial_rate},
and
Lemma \ref{noise.galerkin}
to obtain that for all $ t \in 
[0,T], N, n \in \N, I \in \mathcal{P}_0( \H ), R\in L^1(U)$
with 
$ I 
\subseteq I_n$
it holds that
\begin{align}
\label{kombi}
\nonumber
&
\| X_t - Y_t^{N,I,R}  \|_{L^p(\P;H)}
\leq
\| X_t - X_t^{I_n, \operatorname{Id}_U }\|_{L^p(\P; H)}
+
\big(
\| (-A)^{-\delta}\|_{L(H)}
+
e^{
(C+1)T
}
C
p(1+\tfrac{1}{\varepsilon}) 
\big)
\\
\nonumber
&
\cdot
\sup\nolimits_{u\in [0,T]} \| P_{\H \setminus I} X_u^{I_n, \operatorname{Id}_U } \|_{L^{2p} (\P; H_\delta)}
\big[
1
+
2
\sup\nolimits_{u\in [0,T]}
\| X_u^{I_n, \operatorname{Id}_U } \|_{L^{2pc}(\P; H_\gamma)}^c
\big]
+
\tfrac{\sqrt{p(2p-1)}\max\{1,T^{1/2}\}}{\sqrt{1-2(\delta+\nu)}}
\\
\nonumber
&
\cdot
\left(1
+
\exp\!\left(\tfrac{T C^2
p ( 1 + 1 / \varepsilon )}{2}\right)
\left(
TC^2
p(1+\tfrac{1}{\varepsilon})
\right )^{\frac{1}{p}}
\right)
\left(1 + \sup\nolimits_{s\in [0,T]} 
\|X_s^{I,  \operatorname{Id}_U } \|_{L^{2p\kappa}(\P; H_\eta)}^\kappa
\right)
\\
&
\cdot
\left[
\sup\nolimits_{v\in H_\eta}
\tfrac{\| B(v) (\operatorname{Id}_U-R) \|_{\HS(U, H_{-\nu})}}
{1+ \| v \|_{H_\eta}^\kappa}
\right]
\Bigg[
3
\big(1 + 
\| \xi \|_{L^{2pc}(\P; H_\gamma)}
\big)
C
\!
\max\{1,T\}
\bigg[
\tfrac{ 1}{1-(\gamma+\alpha)} 
+
\sqrt{\tfrac{pc(2pc-1)}{1-2(\gamma + \beta)}}
\bigg]
\\
\nonumber
&
\cdot
\Big[1 + \sup\nolimits_{t\in [0,T]} \| X_t^{I,  \operatorname{Id}_U } \|_{L^{2p c a}(\P; H)}^a\Big]
\Bigg]^c
+
N^{ \delta - \eta}
\tfrac{
\max\{1, T^2\}
}
{(1-2\eta)}
\big (C^2(1+\nicefrac{1}{\varepsilon}) p\big)^{\nicefrac{1}{p}}
\exp\!\left(\tfrac{T C^2 p ( 1 + 1 / \varepsilon )}{2} \right)
\\
\nonumber
&
\cdot
\bigg(
3
\Big[
1+
\!
\sup\nolimits_{s\in [0,T]} \|F( Y^{N,I,R}_s )  \|_{L^{\nicefrac{ 2p}{\theta}}(\P; H)}
+
\sqrt{p(2p-1)}
\sup\nolimits_{s\in [0,T]} \| B( Y^{N,I,R}_s ) \|_{L^{\nicefrac{ 2p}{\theta}}(\P; \HS(U,H))}
\Big ]^{1+\frac{1}{ 2 \theta}}
\\
\nonumber
&
+
\sup\nolimits_{s\in [0,T]}
\| Y^{N,I,R}_s\|_{L^{2p}(\P; H_\eta)}
\bigg)
\bigg(
1 
+  
2
\bigg[
\| \xi \|_{L^{2pc}(\P; H_\gamma)}
+
C
\Big[
\tfrac{ T^{1-(\gamma+\alpha)}}{1-(\gamma+\alpha)} 
+
\sqrt{\tfrac{pc(2pc-1)}{(1-2(\gamma + \beta))}}
T^{\frac{1}{2}-(\gamma + \beta)}
\Big]
\\
\nonumber
&
\cdot
\Big[1 + \sup\nolimits_{s\in [0,T]} \| Y^{N,I,R}_s \|_{L^{2pca}(\P; H)}^{a} \Big]
\bigg]^c
\bigg)
\!
.
\end{align}
Moreover, observe that
Proposition \ref{last}
shows that
\begin{equation}
\label{eq111}
\sup_{I\in \mathcal{P}_0(\H)}
\sup_{R\in L^1(U)} 
\sup_{t\in [0,T]}
\sup_{N\in\N}
\left(
\| X_t \|_{L^{2pc}(\P; H_\gamma)}
+
\| X^{I, \operatorname{Id}_U}_t \|_{L^{2pca}(\P; H)}
+
\| Y_t^{N,I,R}\|_{L^{2pca}(\P; H)}
\right)
<\infty
\end{equation}
and
$\lim_{n\to \infty} 
\sup_{t\in [0,T]}\E [ \| X_t - X_t^{I_n, \operatorname{Id}_U } \|_{H_\gamma}^{2pc} ] = 0.$
Combining this with \eqref{kombi} implies that for all $N\in \N, I \in \mathcal{P}_0( \H ),$ 
$R\in L^1(U)$
it holds that
\begin{align}
\label{kombi.implies}
\nonumber
&
\sup\nolimits_{t\in [0,T]}
\| X_t - Y_t^{N,I,R}  \|_{L^p(\P;H)}
\leq
\big(
\| (-A)^{-\delta}\|_{L(H)}
+
e^{
(C+1)T
}
C
p(1+\tfrac{1}{\varepsilon}) 
\big)
\sup\nolimits_{u\in [0,T]} \| P_{\H \setminus I} X_u \|_{L^{2p} (\P; H_\delta)}
\\
\nonumber
&
\cdot
\!
\big[
1
+
2
\sup\nolimits_{u\in [0,T]}
\| X_u \|_{L^{2pc}(\P; H_\gamma)}^c
\big]
+
\tfrac{\sqrt{p(2p-1)}\max\{1,T^{1/2}\}}{\sqrt{1-2(\delta+\nu)}}
\left[
\sup\nolimits_{v\in H_\eta}
\tfrac{\| B(v) (\operatorname{Id}_U - R) \|_{\HS(U, H_{-\nu})}}
{1+ \| v \|_{H_\eta}^\kappa}
\right]
\\
&
\nonumber
\cdot
\left(1
+
\exp\!\left(\tfrac{T C^2
p ( 1 + 1 / \varepsilon )}{2}\right)
\left(
TC^2
p(1+\tfrac{1}{\varepsilon})
\right )^{\frac{1}{p}}
\right)
\left(1 + \sup\nolimits_{s\in [0,T]} 
\|X_s^{I,  \operatorname{Id}_U } \|_{L^{2p\kappa}(\P; H_\eta)}^\kappa
\right)
\\
&
\cdot
\Bigg[
3\big(1 + 
\| \xi \|_{L^{2pc}(\P; H_\gamma)}
\big)C\!\max\{ 1, T \}
\bigg[
\tfrac{ 1}{1-(\gamma+\alpha)} 
+
\sqrt{\tfrac{pc(2pc-1)}{1-2(\gamma + \beta)}}
\bigg]
\big[1 + \sup\nolimits_{t\in [0,T]} \| X_t^{I,  \operatorname{Id}_U } \|_{L^{2p c a}(\P; H)}^a\big]
\Bigg]^c
\\
\nonumber
&
+
N^{ \delta - \eta}
\tfrac{
\max\{1, T^2\}
}
{(1-2\eta)}
\big (C^2(1+\nicefrac{1}{\varepsilon}) p\big)^{\nicefrac{1}{p}}
\exp\!\left(\tfrac{T C^2 p ( 1 + 1 / \varepsilon )}{2} \right)
\bigg(
3
\Big[
1+
\!
\sup\nolimits_{s\in [0,T]} \|F( Y^{N,I,R}_s )  \|_{L^{\nicefrac{ 2p}{\theta}}(\P; H)}
\\
\nonumber
&
+
\sqrt{p(2p-1)}
\sup\nolimits_{s\in [0,T]} \| B( Y^{N,I,R}_s ) \|_{L^{\nicefrac{ 2p}{\theta}}(\P; \HS(U,H))}
\Big ]^{1+\frac{1}{ 2 \theta}}
+
\sup\nolimits_{s\in [0,T]}
\| Y^{N,I,R}_s\|_{L^{2p}(\P; H_\eta)}
\bigg)
\\
&
\nonumber
\cdot
\bigg(
1 
+  
2
\bigg[
\|\xi  \|_{L^{2pc}(\P; H_\gamma)}
+
C
\Big[
\tfrac{ T^{1-(\gamma+\alpha)}}{1-(\gamma+\alpha)} 
+
\sqrt{\tfrac{pc(2pc-1)}{(1-2(\gamma + \beta))}}
T^{\frac{1}{2}-(\gamma + \beta)}
\Big]
\Big[1 + \sup\nolimits_{s\in [0,T]} \| Y^{N,I,R}_s \|_{L^{2pca}(\P; H)}^{a} \Big]
\bigg]^c
\bigg)
.
\end{align}
Furthermore, note that
Corollary~\ref{rex} proves that for all $t\in [0,T]$ it holds that
$\P(X_t\in H_\eta)=1$ and
\begin{equation}
\label{rex.implies}
\begin{split}
\sup_{I\in \mathcal{P}_0(\H)}
\sup_{R\in L^1(U)}
\sup_{t\in [0,T]}
\sup_{N\in\N}
\left(
\| Y^{N,I,R}_t\|_{L^{2p}(\P; H_\eta)}
+
\|X_t^{I,  \operatorname{Id}_U } \|_{L^{2p\kappa}(\P; H_\eta)}
+
\|X_t \|_{L^{2p}(\P; H_{\eta})}
\right)
<\infty.
\end{split}
\end{equation}
Combining
Corollary~\ref{finite.function} and \eqref{eq111}--\eqref{rex.implies} proves that
there exists a real number $\tilde K \in [0,\infty)$
such that for all $N\in \N, I \in \mathcal{P}_0(\H), R\in L^1(U)$  it holds that
\begin{equation}
\label{kocka}
\begin{split}
\sup_{t\in [0,T]}
\| X_t - Y_t^{N,I,R} \|_{L^p(\P; H)}
\leq
\tilde K
\bigg[
\!
\sup_{t\in [0,T]} \|  P_{\H \setminus I} X_t \|_{L^{2p}(\P; H_\delta)}
+
N^{\delta - \eta }
+
\sup_{v\in H_\eta}\!\Big(\tfrac{\|B(v) (\operatorname{Id}_U - R) \|_{\HS(U, H_{-\nu})}}{1+\|v\|_{H_\eta}^\kappa}\Big)
\!
\bigg]
.
\end{split}
\end{equation}
Moreover, note that for all $I\in \mathcal{P}_0(\H)$ it holds that
\begin{equation}
\begin{split}
\sup\nolimits_{t\in [0,T]} \| P_{\H \setminus I} X_t \|_{L^{2p}(\P; H_\delta)}
\leq
\|P_{\H\setminus I} \|_{L(H, H_{\delta-\eta})}
\sup\nolimits_{t\in [0,T]} \|X_t \|_{L^{2p}(\P; H_{\eta})}.
\end{split}
\label{krneki}
\end{equation}
Combining this, \eqref{rex.implies}, and \eqref{kocka} completes the proof of Theorem~\ref{Finale}.
\end{proof}
\section{A stochastic reaction-diffusion partial differential equation}
\label{section8}
In this section we illustrate Theorem \ref{Finale} by a simple example, that is, 
we illustrate Theorem \ref{Finale} in the case of a 
stochastic reaction-diffusion partial differential equation. 
More formally, 
let
$ 
  \left(
    H, 
    \langle \cdot, \cdot \rangle_H ,
    \left\| \cdot \right\|_H 
  \right)
$
be the $ \R $-Hilbert space of equivalence classes
of Lebesgue square integrable functions 
from $ (0,1) $ to $ \R $,
let
$ T, \rho, \kappa, \varepsilon, \sigma \in (0,\infty) $,
$ \theta \in ( 0, \nicefrac{ 1 }{ 4 } ] $,  
$ \gamma \in ( \nicefrac{ 1 }{ 4 }, \nicefrac{ 1 }{ 2 } ) $,
$
  ( e_n )_{ n \in \N }
  \subseteq H
$,
$ 
  ( r_n )_{ n \in \N } \subseteq \R
$,
$ 
  ( \lambda_n )_{ n \in \N } \subseteq \R
$
satisfy 
that for all $ n \in \N $
and $ \mu_{ (0,1) } $-almost all 
$ x \in (0,1) $
it holds that
$  
  e_n( x ) = \sqrt{2} \sin( n \pi x )
$,
$ 
  \lambda_n = - \varepsilon \pi^2 n^2 
$,
and 
$
  \sup_{ m \in \N } \left( m \cdot \left| r_m \right| \right) < \infty
$,
let
$
  A \colon D( A ) \subseteq H \to H
$ 
be the linear operator such that
$ 
  D(A) 
  = 
  \big\{
    v \in H \colon 
    \sum_{ n = 1 }^{ \infty }
    | 
      \lambda_n \langle e_n, v \rangle_H 
    |^2 < \infty
  \big\} 
$
and such that for all 
$v \in D(A)$
it holds that
$  
  A v = \sum_{ n = 1 }^{ \infty } \lambda _n 
  \langle e_n, v \rangle _H e_n 
$,
let $ Q \in L_1(H) $ 
be the linear operator such that 
for all $ v \in H $ it holds that
$ 
  Q v = \sum_{ n = 1 }^{ \infty } r_n \langle e_n, v \rangle_H e_n 
$,
let
$ 
  (H_r, \langle \cdot, \cdot \rangle_{H_r}, \left \| \cdot \right \|_{H_r} )
$, $ r \in \R $, 
be a family of interpolation spaces associated to $ - A $
(see, e.g., Theorem and Definition~2.5.32 
in \cite{Jentzen2014SPDElecturenotes}),
let
$ ( \Omega, \mathcal{ F }, \P) $
be a probability space with a normal filtration 
$ ( \mathcal{ F }_t )_{t \in [ 0, T]} $,
let 
$ \xi \in H_{ 1 / 2 } $
satisfy that for $ \mu_{ (0,1) } $-almost all $ x \in (0,1) $
it holds that
$
  \xi( x ) \geq 0
$,
let
$ \left( W_t \right)_{t \in [0, T]}$
be a cylindrical
$\operatorname{Id}_H$-Wiener process with respect to 
$ ( \mathcal{F}_t )_{t \in [0, T]} $,
let 
$
  F \in \mathcal{C}(H_\gamma, H)
$ 
and 
$
  B \in \mathcal{C}(H_\gamma, \HS(H))
$ 
be the functions with the properties that for all 
$ v\in H_\gamma $, $ u \in H $
and $ \mu_{ (0,1) } $-almost all $ x \in (0,1) $ 
it holds that 
$
  \big( F(v) \big)( x ) =
  \kappa \left| v(x) \right| \left( \rho - v(x) \right)
$
and 
$
  \big(B(v)(u)\big)(x) = \sigma \cdot v(x) \cdot \big( \sqrt{Q} u \big)(x)
$,
let 
$ ( P_n )_{ n \in \N } \subseteq L(H) $
be the linear operators with the property that for all
$
  x \in H
$, 
$ n \in \N $ 
it holds that
$
  P_n(x) = \sum_{l = 1}^n \langle e_l , x \rangle_H e_l
$,
let 
$
  Y^{ N, n, m} \colon [0, T] \times \Omega \to P_n(H_\gamma) 
$, 
$ N, n, m, \in \N,$ 
be 
$(\mathcal{F}_t)_{t\in [0,T]}$-adapted 
stochastic processes 
such that for all 
$ t \in [0,T] $, $ N, n, m \in \N $
it holds $\P$-a.s.\ that
\begin{equation}
\begin{split}
Y^{N,n,m}_t 
&
=
e^{tA} P_n \xi
+
\int_0^t
e^{(t- \lfloor s \rfloor_{T/N})A}
\,
\1_{ \big \{ 
 \| 
P_n F  (Y^{N,n,m}_{\lfloor s \rfloor_{T/N}}  )
 \|_H 
+ 
 \| P_n B( Y^{N,n,m}_{\lfloor s \rfloor_{T/N}} )  \|_{HS(H)} 
\,
\leq 
\left ( \frac{N}{T}  \right)^\theta \big \}}
P_n F( Y^{N,n,m}_{\lfloor s \rfloor_{T/N}} ) 
\,
ds
\\
&
\quad
+
\int_0^t e^{(t- \lfloor s \rfloor_{T/N})A}
\,
\1_{ \big\{ \| P_n F(Y^{N,n,m}_{\lfloor s \rfloor_{T/N}} )\|_H 
+ 
 \| P_n B( Y^{N,n,m}_{\lfloor s \rfloor_{T/N}}  )  \|_{HS(H)} 
\leq 
\left ( \frac{N}{T} \right )^\theta \big \}}
\,
 P_n B( Y^{N,n,m}_{\lfloor s \rfloor_{T/N}}  )  P_m \,dW_s
 ,
\end{split}
\end{equation}
and let $X \colon [0,T]\times \Omega \to H_\gamma$ be 
an $(\mathcal{F}_t)_{t\in [0,T]}$-adapted stochastic process
with continuous sample paths
such that for all $t\in [0, T]$ it holds $\P$-a.s.\,that
\begin{equation}
X_t = e^{tA} \xi + \int_0^t e^{(t-s)A}F(X_s) \, ds
+
\int_0^t e^{(t-s)A} B( X_s ) \, dW_s
  .
\end{equation}
The stochastic process $ X $ is thus a solution process 
of the SPDE 
\begin{equation*}
  d X_t( x ) = 
  \left[
    \varepsilon
    \tfrac{ \partial^2 }{ \partial x^2 } X_t( x )
    +
    \kappa X_t( x ) \left( \rho - X_t( x ) \right)
  \right] 
  dt
  +
  \sigma X_t( x ) \, dW_t(x) ,
  \,
  X_0( x ) = \xi( x ) ,
  \, 
  X_t( 0 ) = X_t( 1 ) = 0 
\end{equation*}
for $ t \in [0,T] $, $ x \in (0,1) $.
We intend to apply Theorem~\ref{Finale}
to estimate the quantities
$
  \sup_{ t \in [0,T] }
  \| 
    X_t - Y_t^{ N, n, m } 
  \|_{ L^p(\P ; H) }
$
for $ p \in [2,\infty) $, $ N, n, m \in \N $.
To this end, we now check the assumptions 
of Theorem~\ref{Finale}.
First, observe that the assumption that 
$ \gamma > \nicefrac{ 1 }{ 4 } $ 
ensures that for all $ v \in H_{ \gamma } $ it holds that
\begin{equation}
\label{eq:F_bound_example}
\begin{split}
  \left\| 
    F(v) 
  \right \|_{
    H_{ - \gamma } 
  }
& =
  \sup_{ z \in H_{ \gamma } \backslash \{ 0 \} }
  \left(
  \tfrac{
    |
      \langle F(v), z \rangle_H
    |
  }{
    \left\| z \right\|_{ H_{ \gamma } }
  }
  \right)
=
  \sup_{ z \in H_{ \gamma } \backslash \{ 0 \} }
  \left(
    \tfrac{ 1 }{
      \left\| z \right\|_{
        H_{ \gamma }
      }
    }
    \left|
      \int_0^1
        \kappa \cdot
        \left| v(x) \right| \cdot ( \rho - v(x) ) \cdot z(x) 
        \,
      dx
    \right|
  \right)
\\
  &
  \leq 
  \sup_{ z \in H_{ \gamma } \backslash \{ 0 \} }
  \left(
    \tfrac{ 
      \| z \|_{ L^{ \infty }( \mu_{ (0,1) }; \R ) }
    }{
      \| z \|_{ H_{ \gamma } }
    }
    \int_0^1
    \kappa \cdot
    | v(x) |
    \cdot
    | \rho - v(x)| 
    \, dx
  \right)
\\ & 
\leq
  \kappa
  \left(
    \sup_{ z \in H_{ \gamma } \backslash \{ 0 \} }
    \tfrac{ 
      \| z \|_{ L^{ \infty }( \mu_{ (0,1) }; \R ) }
    }{
      \| z \|_{ H_{\gamma} }
    }
  \right)
  \left\| v \right\|_H 
  \left( 
    \rho + \left\| v \right\|_H
  \right)
\\ & \leq
  \left(
    \sup_{ z \in H_{ \gamma } \backslash \{ 0 \} }
    \tfrac{ 
      \| z \|_{ L^{ \infty }( \mu_{ (0,1) }; \R ) }
    }{
      \| z \|_{ H_{ \gamma } }
    }
  \right)
  \tfrac{ 3 \, \kappa \max\{ 1, \rho \} }{ 2 }
  \left( 
    1 + \left\| v \right\|_H^2
  \right)
  < \infty
  .
\end{split}
\end{equation}
Next note that the triangle inequality shows 
that for all $x,y \in \R$ it holds that
\begin{equation}
\label{trivial.ineq}
\begin{split}
&
  \left|
    \left| x \right|
    \left( \rho - x \right)
    -
    \left| y \right|
    \left( \rho - y \right)
  \right|
\leq 
  \rho \left| x - y \right|
  +
  \left|
    \left| x \right| x
    -
    \left| y \right| y
  \right|
\\ & \leq 
  \rho \left| x - y \right|
  +
  \left|
    \left| x \right| x
    -
    \left| y \right| x
  \right|
  +
  \left|
    \left| y \right| x
    -
    \left| y \right| y
  \right|
\leq 
  \rho \left| x - y \right|
  +
  \left| x \right|
  \left|
    \left| x \right|
    -
    \left| y \right|
  \right|
  +
  \left| y \right|
  \left|
    x - y
  \right|
\\
&
\leq
  \rho \left| x - y \right|
  +
  \left| x \right| 
  \left| x - y \right|
  + 
  \left| y \right|
  \left| x - y \right|
  \leq
  \max\{ 1, \rho \}
  \left| x - y \right|
  \left( 1 + \left| x \right| + \left| y \right| \right)
  .
\end{split}
\end{equation}
This implies that for all $ u, v \in H_\gamma $ it holds that 
\begin{equation}
\label{seven}
\begin{split}
  \left\| F( u ) - F( v ) \right\|_H
&
  =
  \kappa
  \left(
    \int_0^1
    \left|
      \left| u(s) \right| 
      \left( \rho - u(s) \right)
      -
      \left| v(s) \right| 
      \left( \rho - v(s) \right)
    \right|^2
    ds
  \right)^{
    \nicefrac{ 1 }{ 2 } 
  }
\\ 
&
  \leq 
  \kappa
  \max\{ 1, \rho \}
  \left(
    \int_0^1
    \left| u(s) - v(s) \right|^2
    \left[
      1 + \left| u(s) \right| + \left| v(s) \right|
    \right]^2
    ds
  \right)^{
    \nicefrac{ 1 }{ 2 } 
  }
\\
&
  \leq
  \left(
    \sup_{ z \in H_{ \gamma } \backslash \{ 0 \} }
    \tfrac{ 
      \| z \|_{ L^{ \infty }( \mu_{ (0,1) }; \R ) }
    }{
      \| z \|_{ H_{ \gamma } }
    }
  \right)
  \kappa
  \max\{ 1, \rho \}
  \left\| u - v \right\|_H 
  \left(
    1  
    +
    \|u \|_{H_{\gamma}}  
    +
    \|v \|_{H_{\gamma}}
  \right)
  .
\end{split}
\end{equation}
Moreover, note that 
for all $ u, v \in H_{ \gamma } $ 
it holds that
\begin{equation}
\label{help.estimate}
\begin{split}
  \left\| B( u ) - B( v ) \right\|_{ \HS(H) }
& 
=
  \left\| B( u - v ) \right\|_{ \HS(H) }
=
  \sigma
  \left[
    \sum_{ n = 1 }^{ \infty }
    \left\| 
      \left( u - v \right) ( Q^{ 1 / 2 } e_n ) 
    \right\|_H^2
  \right]^{ \nicefrac{ 1 }{ 2 } }
\\ & 
=
  \sigma
  \left[
    \sum_{ n = 1 }^{ \infty }
    r_n
    \left\| \left( u - v \right) e_n \right\|_H^2
  \right]^{ \nicefrac{ 1 }{ 2 } }
\leq
  \sigma
  \sqrt{ 
    2 
    \operatorname{Trace}( Q )
  } 
  \left\| u - v \right\|_H
.
\end{split}
\end{equation}
This shows that 
for all $ v \in H_{ \gamma } $, $ p \in [0,\infty) $ it holds that
\begin{equation}
\label{eq:coercivity_example}
\begin{split}
  \langle v, F(v) \rangle_H 
  + 
  p
  \left\| B(v) \right\|_{ \HS(H) }^2
&
\leq
  \kappa \rho
  \left\| v \right\|_H^2
  +
  p
  \left\| B(v) \right\|^2_{ \HS(H) }
\leq 
  \left(
    \kappa \rho 
    +
      2 p \left| \sigma \right|^2 
      \operatorname{Trace}( Q )
  \right)
  \left\| v \right\|_H^2
.
\end{split}
\end{equation}
Furthermore, note that
\eqref{help.estimate} 
and the fact that the function 
$
  \R \ni x \mapsto - x \left| x \right| \in \R
$
is non-decreasing show that
for all $ u, v \in H_1 $, $ p \in [0,\infty) $
it holds that
\begin{equation}
\label{eq:monotonicity_example}
\begin{split}
&
\langle u-v, Au - Av + F(u) - F(v) \rangle_H
+
p
\left\| 
  B(u) - B(v)
\right\|_{ \HS(H) }^2
\\
&
\leq
\kappa \rho
\left\| u - v \right\|_H^2
-
\kappa
\left< 
  u - v , u \left| u \right| - v \left| v \right|
\right>_H
+
2 p \left| \sigma \right|^2 
\operatorname{Trace}( Q )
\left\| u - v \right\|_H^2 
\\
&
\leq
\left(
  \kappa \rho + 2 p \left| \sigma \right|^2 
  \operatorname{Trace}( Q )
\right) 
\left\| u - v \right\|_H^2 
.
\end{split}
\end{equation}
Combining \eqref{eq:F_bound_example}, \eqref{seven}, \eqref{help.estimate}, 
\eqref{eq:coercivity_example}, and \eqref{eq:monotonicity_example} 
allows us to apply Theorem~\ref{Finale}
to obtain 
that 
for every 
$ \eta \in [ \gamma, \nicefrac{ 1 }{ 2 } ) $,
$ \nu \in (0, \nicefrac{ 1 }{ 2 } ) $,
$ \kappa \in (0,\infty) $,
$ p \in [2,\infty) $
there exists a real number $ K \in [0,\infty) $ such that 
for all $ N, n, m \in \N $ it holds that
\begin{equation}
\label{eq:XY}
  \sup_{
    t \in [0,T]
  }
  \big\| 
    X_t - Y_t^{N, n, m} 
  \big\|_{
    L^p( \P; H ) 
  }
\leq 
  K
  \left[ 
    N^{-\eta}
    +
    n^{-2\eta}
    +
    \sup_{v\in H_\eta}
    \Big(
      \tfrac{
        \| 
          B(v) ( \operatorname{Id}_H - P_m ) 
        \|_{
          \HS( H, H_{ - \nu } ) }
        }{
          ( 1 + \| v \|_{ H_{ \eta } } )^\kappa
        }
    \Big)
  \right]
  .
\end{equation}
In the next step we intend to estimate 
the third summand on the right hand side 
of \eqref{eq:XY}.
For this let
$
  \left\| \cdot \right\|_{ H^r( (0,1), \R) } \colon H \to [0, \infty]
$,
$ r \in (0,1) $,
be the functions with the property that for all 
$ r \in (0,1) $, $ v \in H $ 
it holds that
\begin{equation}
  \left\| v \right\|_{ H^r( (0,1), \R ) }
=
  \left[
    \int_0^1 |v(x)|^2\,dx
    +
    \int_0^1 \int_0^1
    \tfrac{ 
      | v(x) - v(y) |^2
    }{  
      \left| x - y \right|^{ 1 + 2 r } 
    }
    \, dx \, dy
  \right]^{ \nicefrac{ 1 }{ 2 } }
.
\end{equation}
Note that there exists real numbers 
$ \vartheta_r \in [1,\infty) $, $ r \in (0, \nicefrac{ 1 }{ 2 } ) $,
such that
for all 
$ r \in ( 0, \nicefrac{ 1 }{ 2 } ) $, $ v \in H_r $ 
it holds that
\begin{equation}
\label{eq143}
  \tfrac{ 1 }{ \vartheta_r } 
  \left\| v \right\|_{ H^{ 2 r }( (0,1), \R ) } 
  \leq 
  \left\| v \right\|_{ H_r }
  \leq 
  \vartheta_r
  \left\| v \right\|_{
    H^{ 2 r }( (0,1), \R ) 
  }
\end{equation}
(see, e.g., A. Lunardi~\cite{l09} or also (A.46) in Da Prato \& Zabczyk~\cite{dz92}).
In addition, we observe that for all 
$ u, v \in \mathcal{M}( \mathcal{B}( (0,1) ) , \mathcal{B}( \R ) ) $, 
$ r \in (0,1) $, $ s \in (r,\infty) $ 
it holds that 
\begin{equation}
\label{yield1}
\begin{split}
  \left\| u \cdot v \right\|_{ 
    H^r((0,1),\R)
  }
&
  \leq
  \sqrt{2}
  \left\| v \right\|_{ H^r( (0,1) , \R ) }
  \left(
    \sup\nolimits_{ x \in (0,1) } |u(x)|  
    +
    \tfrac{
      \sqrt{ 3 }
    }{
      \sqrt{ 2 s - 2 r }
    }
    \sup\nolimits_{ x \in (0,1), y \in (x,1) }
    \tfrac{
      | u(x) - u(y) | 
    }{
      | x - y |^s
    }
  \right)
\end{split}
\end{equation}
(cf., e.g., Jentzen \& R\"{o}ckner~\cite[(22)--(23)]{jr12}).
This and \eqref{eq143}
prove that 
for all 
$ m \in \N $, 
$ r \in [ \gamma, \nicefrac{ 1 }{ 2 } ) $,
$ s \in ( 2 r - \nicefrac{ 1 }{ 2 } , \infty) $, 
$ \nu \in ( \nicefrac{ s }{ 2 } + \nicefrac{ 1 }{ 4 } , \infty ) $, $ v \in H_r $ 
it holds that
\begin{equation}
\begin{split}
\label{ithol}
&
  \left\| 
    B(v) ( \operatorname{Id}_H - P_m ) 
  \right\|_{
    \HS( H, H_{ - \nu } ) 
  }
=
  \left\| 
    ( - A )^{ - \nu } B(v) ( \operatorname{Id}_H - P_m ) 
  \right\|_{ \HS( H ) }
\\
&
  =
  \left\| 
    ( \operatorname{Id}_H - P_m ) B(v)^* ( - A )^{ \nu } 
  \right\|_{ \HS( H ) }
\leq
  \left\| 
    ( \operatorname{Id}_H - P_m) ( - A )^{ - r } 
  \right\|_{ L(H) }
  \left\| 
    ( - A )^r B(v)^* ( - A )^{ - \nu } 
  \right\|_{ \HS( H ) }
\\ &
  =
  \left| \lambda_{ m + 1 } \right|^{ - r }    
  \left\| 
    ( - A )^r B(v)^* (-A)^{ - \nu } 
  \right\|_{
    \HS( H )
  }
=
  \left| \lambda_m \right|^{ - r }    
  \left[
    \sum_{ n = 1 }^{ \infty }
    \left\| 
      ( - A )^r B(v)^* ( - A )^{ - \nu } e_n 
    \right\|_H^2
  \right]^{ \nicefrac{ 1 }{ 2 } }
\\ & =
  \frac{ 1 }{ 
    \left| \lambda_m \right|^{ r }    
  }
  \left[
    \sum_{ n, k = 1 }^{ \infty }
      \left| r_k \right|
      \left| \lambda_n \right|^{ - 2 \nu }
      \left| \left< e_k, ( - A )^r ( v \cdot e_n ) \right>_H \right|^2
  \right]^{ \nicefrac{ 1 }{ 2 } }
\\ & \leq
  \frac{ 1 }{ 
    \left| \lambda_m \right|^{ r }    
  }
  \left[
    \sup_{ k \in \N }
      \left| r_k \right|
      \left( \lambda_k \right)^{ 1 / 2 }
  \right]^{ \nicefrac{ 1 }{ 2 } }
  \left[
    \sum_{ n, k = 1 }^{ \infty }
      \left| \lambda_n \right|^{ - 2 \nu }
      \left| \left< e_k, ( - A )^{ ( r - 1 / 4 ) } ( v \cdot e_n ) \right>_H \right|^2
  \right]^{ \nicefrac{ 1 }{ 2 } }
\\ &
\leq 
  \frac{ \vartheta_{ r - 1 / 4 } }{ 
    \left| \lambda_m \right|^{ r }    
  }
  \left[
    \sup_{ k \in \N }
      \left| r_k \right|^{ 1 / 2 }
      \left( \lambda_k \right)^{ 1 / 4 }
  \right]
  \left[
    \sum_{ n = 1 }^{ \infty }
      \left| \lambda_n \right|^{ - 2\nu }
      \left\| v \cdot e_n \right\|_{ H^{ 2 r - 1 / 2 } }^2
  \right]^{ \nicefrac{ 1 }{ 2 } }
\\ & \leq
  \tfrac{
    \vartheta_{ r - 1 / 4 } 
    \,
    \sqrt{ 2 }
    \,
    \|
      ( - A )^{ 1 / 4 } Q^{ 1 / 2 }
    \|_{ L(H) }
    \,
    \| v \|_{ H^{ 2 r - 1 / 2 }( (0,1), \R ) }
  }{
    \left| \lambda_m \right|^{ r }    
  }
  \left[
  \sum_{ n = 1 }^{ \infty }
  \tfrac{ 
  \left[
    \sqrt{ 2 }
    +
    \frac{ \sqrt{ 3 } }{ \sqrt{ 2 s - 4 r } }
    \sup_{ x \in (0,1), y \in (x,1) }
    \tfrac{ | e_n(x) - e_n(y) | }{ |x - y|^s }
  \right]^2
  }{
    \left| \lambda_n \right|^{ 2 \nu }
  }
  \right]^{ \nicefrac{ 1 }{ 2 } }
\\
&
  =
  \tfrac{
    2
    \,
    \vartheta_{ r - 1 / 4 } 
    \,
    \|
      ( - A )^{ 1 / 4 } Q^{ 1 / 2 }
    \|_{ L(H) }
    \,
    \| v \|_{ H^{ 2 r - 1 / 2 }( (0,1), \R ) }
  }{
    \left| \lambda_m \right|^{ r }    
  }
  \left[
    \sum_{ n = 1 }^{ \infty }
    \tfrac{ 
    \left[
      1
      +
      \frac{ 
        \sqrt{ 3 }
      }{ 
        \sqrt{ 2 s - 4 r } 
      }
      \sup_{ x \in (0,1) }
      \sup_{ y \in (x,1) }
      \tfrac{
        \left|
          \sin( n \pi x ) - \sin( n \pi y )
        \right|^s
      }{
        \left| x - y \right|^s
      }
    \right]^2
    }{ 
      \left| \lambda_n \right|^{ 2 \nu }
    }
  \right]^{ \nicefrac{ 1 }{ 2 } }
\\
&
\leq
  \frac{
    2
    \left| \vartheta_{ r - 1 / 4 } \right|^2
    \|
      ( - A )^{ 1 / 4 } Q^{ 1 / 2 }
    \|_{ L(H) }
    \left\| v \right\|_{ H_{ r - 1 / 4 } }
  }{
    \varepsilon^{ \nu }
    \,
    \pi^{ 2 \nu }
    \left| \lambda_m \right|^{ r }    
  }
  \left[
    \sum_{ n = 1 }^{ \infty }
    n^{ - 4 \nu }
    \left[
      1
      +
      \tfrac{ \sqrt{ 3 } }{
        \sqrt{ 1 + 2 s - 4 r }
      }
      \left( n \pi \right)^s
    \right]^2
  \right]^{ \nicefrac{ 1 }{ 2 } }
\\ & \leq 
  \frac{
    2
    \left| \vartheta_{ r - 1 / 4 } \right|^2
    \|
      ( - A )^{ 1 / 4 } Q^{ 1 / 2 }
    \|_{ L(H) }
    \left\| v \right\|_{ H_{ r - 1 / 4 } }
  }{
    \varepsilon^{ ( \nu + r ) }
    \,
    \pi^{ ( 2 \nu + 2 r ) }
    \,
    m^{ 2 r }
  }
    \left[
      1
      +
      \tfrac{ \pi^s \sqrt{ 3 } }{
        \sqrt{ 1 + 2 s - 4 r }
      }
    \right]
  \left[
    \sum_{ n = 1 }^{ \infty }
    n^{ ( 2 s - 4 \nu ) }
  \right]
  < \infty
  .
\end{split}
\end{equation}
This implies that
for all 
$ m \in \N $, 
$ \eta \in [ \gamma, \nicefrac{ 1 }{ 2 } ) $,
$ s \in ( 2 \eta - \nicefrac{ 1 }{ 2 } , \nicefrac{ 1 }{ 2 } ) $, 
$ \nu \in ( \nicefrac{ s }{ 2 } + \nicefrac{ 1 }{ 4 } , \nicefrac{ 1 }{ 2 } ) $
it holds that
\begin{equation}
\begin{split}
&
  \sup_{ v \in H_{ \eta } }
  \left(
  \frac{
    \left\| 
      B(v) ( \operatorname{Id}_H - P_m ) 
    \right\|_{
      \HS( H, H_{ - \nu } ) 
    }
  }{
    1 + \left\| v \right\|_{ H_{ \eta } }
  }
  \right)
\leq 
  \frac{
    | \vartheta_{ \eta - 1 / 4 } |^2
    \,
    \|
      ( - A )^{ 1 / 4 } Q^{ 1 / 2 }
    \|_{ L(H) }
  }{
    \varepsilon^{ ( \nu + \eta + 1 / 4 ) }
    \,
    m^{ 2 \eta }
    \,
    ( \nicefrac{ s }{ 2 } + \nicefrac{ 1 }{ 4 } - \eta )
  }
  \left[
    \sum_{ n = 1 }^{ \infty }
    n^{ ( 2 s - 4 \nu ) }
  \right]
  < \infty
  .
\end{split}
\end{equation}
Combining this with \eqref{eq:XY} shows that
for every 
$ \eta \in [ \gamma, \nicefrac{ 1 }{ 2 } ) $,
$ p \in [2,\infty) $
there exists a real number $ K \in [0,\infty) $ such that 
for all $ N, n, m \in \N $ it holds that
\begin{equation}
  \sup_{
    t \in [0,T]
  }
  \big\| 
    X_t - Y_t^{N, n, m} 
  \big\|_{
    L^p( \P; H ) 
  }
\leq 
  K
  \left[ 
    N^{-\eta}
    +
    n^{-2\eta}
    +
    m^{-2\eta}
  \right]
  .
\end{equation}
This ensures that
for every $ p, \iota \in (0, \infty ) $ 
there exists a real number $ K \in [0,\infty) $ 
such that for all $ N, n, m \in \N $
it holds that
\begin{equation}
\label{eq:bedaneca}
\begin{split}
  \sup_{ t \in [0,T] }
  \big\| 
    X_t - Y_t^{ N, n, m } 
  \big\|_{ L^p(\P ; H) }
\leq
K
\left( 
  \frac{ 
    1 
  }{
    N^{ 
      ( \nicefrac{ 1 }{ 2 } - \iota ) 
    }
  }
  +
  \frac{ 1 }{
    n^{ ( 1 - \iota ) }
  }
  +
  \frac{ 1 }{ 
    m^{ ( 1 - \iota ) }
  }
\right)
.
\end{split}
\end{equation}
In particular, this shows that for every 
$ p, \iota \in (0, \infty ) $ 
there exists a real number $ K \in [0,\infty) $ 
such that for all $ n \in \N $
it holds that
$
  \sup_{ t \in [0,T] }
  \| 
    X_t - Y_t^{ n^2, n, n } 
  \|_{ L^p(\P ; H) }
\leq
  K \cdot 
  n^{ ( \iota - 1 ) }
$.
\bibliographystyle{acm}
\bibliography{bibfile}

\end{document}